\def \Ql {\overline{\mathbb Q}_{\ell}}
\def \F {\mathcal F}
\def \G {\mathcal G}
\def \L {\mathcal L}
\def \Fq {\overline{\mathbb F}_{q_0}}
\def \Frob {\operatorname{Frob}}
\def \tr {\operatorname{tr}}
\def \Gal {\operatorname{Gal}}
\newtheorem{theorem}{Theorem}[section]
\newtheorem{lemma}[theorem]{Lemma}
\newtheorem{prop}[theorem]{Proposition}
\newtheorem{cor}[theorem]{Corollary}
\theoremstyle{remark}
\newtheorem{example}[theorem]{Example}
\theoremstyle{definition}
\newtheorem{defi}[theorem]{Definition}
\begin{document}

\title[L-functions of twists by Dirichlet characters]{The equidistribution of L-functions of twists by Witt vector Dirichlet characters over function fields}
\author{Will Sawin}

\maketitle

\begin{abstract} Katz showed that the $L$-functions of all Dirichlet characters of $\mathbb F_q(t)$, with conductor a fixed power of a degree one prime (or $\infty$), are equidistributed in the $q \to \infty$ limit. We generalize this statement to the $L$-functions of twists of an arbitrary Galois representation by Dirichlet characters, including independence of the $L$-functions of twists of different representations by the same Dirichlet character. A similar generalization, without the independence statement, for characters with squarefree conductor, was proven by Hall, Keating, and Roditty-Gershon. \end{abstract}
\setcounter{tocdepth}{1}

\tableofcontents

\section{Introduction and statement of results}

We state our results in three equivalent languages - automorphic forms, Galois representations, and middle extension sheaves. The language of middle extension sheaves is most conducive to proving results of this type. The equivalence between the Galois representation and middle extension languages is straightforward, while the equivalence with the automorphic language is a deep theorem of Lafforgue. Afterwards, we will make some comments on the results, their method of proof, and their relationship to past work.

Let $\mathbb F_{q_0}$ be a finite field, $\mathbb F_q$ a finite field extension, and $d \geq 1$ a natural number. Let $W_{d,q} =\left( \mathbb F_q[t]/t^{d+1} \right)^\times/ 
\mathbb F_q^\times$. There exists (Lemma \ref{cft}) an isomorphism between $W_{d,q}$ and the ray class group of $\mathbb F_q(x)$ with conductor $\infty^{d+1}$ modulo the prime $(x)$. If we add the condition that this isomorphism sends the unit group $\mathbb F_q[[ x^{-1} ]]^\times$ of the local field at $\infty$ to $\left( \mathbb F_q[t]/t^{d+1} \right)^\times $ by the natural projection after setting $t= x^{-1}$, it becomes unique. Using this, we will view characters $\Lambda$ of $W_{d,q}$ as characters of that ray class group, and, when appropriate, as Galois characters.

For a tuple $(N_1,\dots, N_r)$ of natural numbers, and a continuous, conjugacy-invariant function $f$ on $\prod_{i=1}^r U (N_i)$, let $\langle f \rangle ( c_1,\dots,c_r)$ for $c_1\dots,c_r \in U(1)$ be the integral of $f$ over the subset of $\prod_{i=1}^r U_{N_i}$ consisting of matrices with determinants $c_1,\dots,c_r$ against the unique $\prod_{i=1}^r SU(N_i)$ -invariant measure on that subset with total mass one. For instance, if $f$ is the character of an irreducible representation of $\prod_{i=1}^r U (N_i)$, then $\langle f \rangle = f$ if $f$ is one-dimensional and $\langle f\rangle=0$ otherwise.

In the language of automorphic forms, this result is as follows:

\begin{theorem}\label{main-automorphic} Let $r$ be a natural number. For all $i$ from $1$ to $r$, let $m_i$ be a natural number and let $\pi_i$ be a cuspidal automorphic representation of $GL_{m_i}( \mathbb F_{q_0}(x))$, with central character of finite order. Assume that no $\pi_i$ is isomorphic to any $\pi_j$ after twisting by any character of the form $g \mapsto \alpha^{ \deg ( \det g)}$ for any $\alpha \in \mathbb C^\times$ unless $i=j$ and $\alpha=1$. Let $\pi_{i, \mathbb F_q(x)}$ be the base change of $\pi_i$ to $GL_{m_i}( \mathbb F_q(x))$.

Then there exist natural numbers $N_1,\dots, N_r$ and for all $q$, a set $S_q$ of characters $\Lambda:  W_{d,q}\to \mathbb C^\times$, of size $(1- O(1/q)) q^{d}$, such that for all $\Lambda \in S_q$, for all $i$ from $1$ to $r$, $ L( \pi_{i,  \mathbb F_q(x)}  \otimes \Lambda, s) = \det ( 1 - \varphi_{\Lambda,i} q^{1/2 -s} )$ for some $\varphi_{\Lambda,i} \in U (N_i)$ (uniquely determined up to conjugacy by this condition). 

Assume in addition that $d \geq 4$. Let $f$ be a continuous, conjugacy-invariant function on $\prod_{i=1}^r U(N_i)$. 
Then 
\[  \lim_{q \to \infty} \left( \frac{ \sum_{\Lambda \in S_q } f ( \varphi_{\Lambda,1},\dots, \varphi_{\Lambda,r}) } { \left| S_q \right|} - \frac{ \sum_{\Lambda \in S_q } \langle f \rangle ( \det \varphi_{\Lambda,1},\dots, \det \varphi_{\Lambda,r}) } { \left| S_q \right|} \right) = 0. \]

\end{theorem}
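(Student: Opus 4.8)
The plan is to translate everything into the language of middle extension sheaves via Lafforgue's theorem, so that each $\pi_i$ corresponds to a lisse sheaf $\F_i$ on an open subset of $\mathbb P^1_{\mathbb F_{q_0}}$, pure of weight zero, with the $L$-function of the twist $\pi_{i,\mathbb F_q(x)} \otimes \Lambda$ becoming $\det(1 - \Frob_q \cdot q^{1/2-s} \mid H^1_c)$ of $\F_i$ tensored with the Kummer-type sheaf $\L_\Lambda$ attached to $\Lambda$. The first step is therefore a purely formal one: reduce Theorem \ref{main-automorphic} to an equidistribution statement about the Frobenius conjugacy classes $\varphi_{\Lambda,i}$ in the unitary groups $U(N_i)$, where $N_i = \dim H^1_c$ is computed by an Euler–Poincaré (Grothendieck–Ogg–Shafarevich) calculation from the ramification of $\F_i$ at the bad primes, at $0$, and at $\infty$. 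The determinant of $\varphi_{\Lambda,i}$ is controlled explicitly — it is essentially $\pm$ a Gauss-sum-type product that depends on $\Lambda$ through the restriction of $\Lambda$ to the decomposition groups at $0$ and $\infty$ — which is why the correct statement is equidistribution of the $\varphi_{\Lambda,i}$ after fixing these determinants, i.e. the comparison with $\langle f \rangle$.

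The main analytic engine will be a big monodromy computation combined with Deligne's equidistribution theorem. Concretely, I would form a parameter space $T$ parametrizing the characters $\Lambda$ of $W_{d,q}$ — this is (an open subset of) the variety whose $\mathbb F_q$-points are $W_{d,q}$, or rather its dual — and assemble the cohomology groups $H^1_c(\mathbb P^1_{\overline{\mathbb F}_{q_0}}, \F_i \otimes \L_\Lambda)$ into a single lisse sheaf $\mathcal{H}_i$ on $T$ of rank $N_i$, so that the conjugacy class $\varphi_{\Lambda,i}$ is the (unitarized) Frobenius of $\mathcal{H}_i$ at the point $\Lambda$. After restricting to the open set $S_q$ where all the $\mathcal{H}_i$ have the expected rank and are pure (the complement being the ``degenerate'' $\Lambda$, of size $O(q^{d-1})$, which requires a separate diophantine/ramification estimate to bound), Deligne's equidistribution theorem reduces the whole problem to showing that the geometric monodromy group of $\bigoplus_i \mathcal{H}_i$, on each fiber of the determinant map, is as large as possible: namely $\prod_i SL(N_i)$ (or the appropriate form, $Sp$ or $O$, if some $\F_i$ is self-dual — but the finite-order central character and the genericity hypothesis on the $\pi_i$ should force the general linear case, or at worst be handled uniformly). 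The independence-of-twists part of the theorem is precisely the statement that the monodromy of the direct sum is the full product, not a smaller subgroup, and this is where the hypothesis that no $\pi_i$ is a twist of $\pi_j$ by an unramified character enters decisively.

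The hard part will be the big monodromy computation, and it naturally splits into two pieces. The first is showing that each individual $\mathcal{H}_i$ has monodromy group containing $SL(N_i)$ on the relevant fibers: the standard route is to exhibit a reflection or a small-rank unipotent/pseudoreflection element in the local monodromy — typically by degenerating $\Lambda$ to a character whose conductor drops, so that $\L_\Lambda$ acquires a tame or lightly-wild local structure and the cohomology sheaf picks up a Picard–Lefschetz-type vanishing cycle — and then invoking the classification of subgroups of $SL(N)$ generated by reflections (Kazhdan–Margulis / Zalesski–Serežkin, as used by Katz) together with an irreducibility statement for $\mathcal{H}_i$ itself. The condition $d \geq 4$ is presumably exactly what is needed to have enough room in the family to produce such a degeneration while keeping $N_i$ large enough that the small exceptional cases are excluded. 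The second, and genuinely new, piece is ruling out that two factors $\mathcal{H}_i$ and $\mathcal{H}_j$ become isomorphic (or dual) as local systems on $T$: by Goursat–Kolchin–Ribet, once each factor is big, the only obstruction to the full product is such an isomorphism, and an isomorphism $\mathcal{H}_i \cong \mathcal{H}_j \otimes (\text{rank one})$ of sheaves on $T$ would, by examining how both sides vary with $\Lambda$ and extracting the ``constant'' direction (e.g. via a further family of twists, or by looking at local monodromy at the boundary of $T$), force $\F_i$ to be a twist of $\F_j$ by a character that is unramified everywhere the $\Lambda$'s are ramified — and a more careful argument, using that the $\Lambda$ range over all of $W_{d,q}$, upgrades this to an unramified-everywhere twist, i.e. a twist by $g \mapsto \alpha^{\deg(\det g)}$, contradicting the hypothesis. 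I expect the bookkeeping around the determinants — making sure the monodromy statement is proved on each determinant fiber and that $\langle f \rangle$ correctly captures the averaging over those fibers — together with this last rigidity argument to be the most delicate points; the rest is an application of now-standard techniques from Katz's books.
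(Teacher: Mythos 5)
Your high-level architecture is right — reduce to sheaves via Lafforgue, construct a lisse sheaf over the parameter space of characters, apply Deligne's equidistribution, and use Goursat–Kolchin–Ribet for independence — and this is indeed what the paper does. But your plan for the crucial big-monodromy step is not the paper's, and I don't think it can be made to work here.

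You propose to find a pseudoreflection in the local monodromy by degenerating $\Lambda$ to a character of lower conductor, so that the cohomology sheaf ``picks up a Picard–Lefschetz-type vanishing cycle,'' and then to invoke the classification of complex reflection groups. That is Katz's technique in the \emph{squarefree} / tame setting (and in several of his earlier families), but it is precisely the technique that fails in the Witt-vector setting. When a parameter $y$ crosses the bad locus here, what changes is the Swan conductor of $\mathcal F_i \otimes \mathcal L_{univ,y}$ at $\infty$ (Lemma \ref{swan-lemma}): the slope of some inertia constituent $V_j \otimes \mathcal L_{univ,y}$ drops below $d$. The resulting drop in $H^1_c$ is $(\mathrm{slope\ deficit}) \cdot \dim V_j$, which is typically much larger than $1$, and the local monodromy of $\mathcal G_i$ around the bad locus is a wild-inertia phenomenon at $\infty$, not a vanishing cycle at a moving point in $\mathbb A^1$. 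There is no pseudoreflection to find, and no Zalesski–Serežkin-type classification to invoke. The paper instead computes the fourth absolute moment of $\mathrm{tr}(\Frob)$ over the family (Lemma \ref{mmp}, Proposition \ref{moments}) and applies Guralnick–Tiep's solution of Larsen's conjecture, which says that if the dimension of the $G_{geom}$-invariants of $V^{\otimes 4}\otimes V^{\vee\otimes 4}$ is exactly $4!$ and $N\geq 3$, then $SL_N\subseteq G_{geom}$. This is the reason the paper relies on the classification of finite simple groups. It is also the real reason for $d\geq 4$: the $k$-th moment calculation (Lemma \ref{diophantinepart}) needs $k\leq d$, and you need $k=4$ for Guralnick–Tiep — not, as you suggest, to ``have enough room to produce a degeneration.''

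Two smaller points. First, your remark that ``the finite-order central character and the genericity hypothesis on the $\pi_i$ should force the general linear case'' is not correct: a self-dual $\pi_i$ (e.g.\ a symmetric-square lift) is perfectly compatible with all the hypotheses. The reason the symmetry type is $GL/SL$ rather than $Sp$ or $O$ is that the generic twist $\mathcal F_i\otimes\mathcal L_\Lambda$ is not self-dual even when $\mathcal F_i$ is, since $\mathcal L_\Lambda$ is not self-dual for generic $\Lambda$; the moment method detects this automatically because an $Sp$ or $O$ monodromy group would yield a fourth moment larger than $4!$. Second, for the Goursat–Kolchin–Ribet step, your proposed ``rigidity'' argument — extracting an everywhere-unramified twist relating $\mathcal F_i$ and $\mathcal F_j$ from an isomorphism of $\mathcal H_i$ and a twist of $\mathcal H_j$ — is not worked out and not what the paper does. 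The paper again uses a moment computation (Lemmas \ref{indio}, \ref{mmtwo}, \ref{indmoments}): the mixed fourth moment $\frac{1}{q^d}\sum_\Lambda |\mathrm{tr}|^2_i\,|\mathrm{tr}|^2_j / q^{w_i+w_j+2}$ converges to $1$ rather than $2$, exactly because $\mathcal F_i\not\cong\mathcal F_j$ geometrically forces the orthogonality $\sum_x \mathrm{tr}(\Frob_q,\mathcal F_i,x)\overline{\mathrm{tr}(\Frob_q,\mathcal F_j,x)} = o(q^{(w_i+w_j)/2+1})$, and a one-dimensional invariant space in $V_i\otimes V_i^\vee\otimes V_j\otimes V_j^\vee$ is exactly what Goursat–Kolchin–Ribet needs. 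This is cleaner and, unlike your sketch, it actually lands.
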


Because the determinant of $\varphi_{\Lambda,i}$ is the constant in the functional equation of $ L (s,\pi_i \otimes \Lambda) $, this reduces the problem of studying the distribution of the $L$-functions to the problem of studying the distribution of the constants of their functional equations. This should be possible by writing the constants explicitly using Gauss sums.

In the language of Galois representations: 

\begin{theorem}\label{main-arithmetic} Let $r$ be a natural number, $\ell$ a prime not dividing $q_0$, and $\iota: \overline{\mathbb Q}_\ell \to \mathbb C$ an embedding.  For all $i$ from $1$ to $r$, let $m_i$ be a natural number and let $\rho_i$ be a continous $\ell$-adic representation of $\Gal ( \mathbb F_{q_0}(x))$, $\iota$-pure of weight $w_i$. Assume, after restricting the $\rho_i$ to $\Gal ( \overline{\mathbb F}_{q_0} (x))$, that the $\rho_i$ are irreducible and that no $\rho_i$ is isomorphic to $\rho_j$ unless $i=j$.  View characters $\Lambda:   W_{d,q} \to \mathbb \Ql^\times$ as characters of the abelianization of the Galois group of $\mathbb F_q(x)$ by the Artin map.

Then there exist natural numbers $N_1,\dots, N_r$ and a set $S_q$ of characters $\Lambda: W_{d,q}\to \mathbb  \overline{\mathbb Q}_\ell^\times$, of size $(1- O(1/q)) q^d$ , such that for all $\Lambda \in S_q$, for all $i$ from $1$ to $r$, $ \iota( L(\mathbb F_q(x),  \rho_i \otimes \Lambda, s)) = \det ( 1 - \varphi_{\Lambda,i} q^{ \frac{w_i+1}{2} -s} )$ for some $\varphi_{\Lambda,i} \in U (N_i)$ (uniquely determined up to conjugacy by this condition). 

Assume in addition that $d \geq 4$. Let $f$ be a continuous, conjugacy-invariant function on $\prod_{i=1}^r U(N_i)$. Then 
\[  \lim_{q \to \infty} \left( \frac{ \sum_{\Lambda \in S_q } f ( \varphi_{\Lambda,1},\dots, \varphi_{\Lambda,r}) } { \left| S_q \right|} - \frac{ \sum_{\Lambda \in S_q } \langle f \rangle ( \det \varphi_{\Lambda,1},\dots, \det \varphi_{\Lambda,r}) } { \left| S_q \right|} \right) = 0. \]
\end{theorem}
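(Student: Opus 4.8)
The plan is to translate the statement into the language of middle extension sheaves, to organize the $L$-functions of the twists $\rho_i \otimes \Lambda$ into a single family over the parameter space of Dirichlet characters, to reduce the desired equidistribution to the assertion that the geometric monodromy group of this family contains $\prod_{i=1}^r SL(N_i)$, and then to prove that containment. Concretely, by the (straightforward) equivalence with the middle extension language I would replace each $\rho_i$ by the corresponding middle extension sheaf $\F_i$ on $\mathbb P^1_{\mathbb F_{q_0}}$, which is $\iota$-pure of weight $w_i$. By Lemma~\ref{cft} and the Artin--Schreier--Witt description of the characters of $W_{d,q}$, the characters of conductor exactly $\infty^{d+1}$ are the $\mathbb F_q$-points of a geometrically irreducible affine $\mathbb F_{q_0}$-scheme $\mathcal A$ of dimension $d$ (those of smaller conductor forming a subset of size $O(q^{d-1})$), and $\mathcal A$ carries a universal rank one sheaf $\L$ on $\mathbb G_m \times \mathcal A$. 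Put $\G_i := R^1\pi_* j_{!*}\big((\F_i \boxtimes 1)\otimes \L\big)\big(\tfrac{w_i+1}{2}\big)$, where $\pi \colon \mathbb P^1 \times \mathcal A \to \mathcal A$ is the projection and $j$ is the open immersion on which $(\F_i\boxtimes 1)\otimes\L$ is lisse. Over a dense open $\mathcal A^\circ \subseteq \mathcal A$ — the locus where the fibrewise rank is a constant $N_i$, where the $H^0$ and $H^2$ of the fibres vanish, and where $\L$ is well behaved — the $\G_i$ are lisse of rank $N_i$, geometrically semisimple and $\iota$-pure of weight $0$ by the decomposition theorem, and for $\Lambda \in \mathcal A^\circ(\mathbb F_q)$ one has $\iota\big(L(\mathbb F_q(x),\rho_i\otimes\Lambda,s)\big) = \det\big(1 - \varphi_{\Lambda,i} q^{\frac{w_i+1}{2}-s}\big)$ with $\varphi_{\Lambda,i}\in U(N_i)$ the Frobenius conjugacy class on $(\G_i)_\Lambda$. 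Taking $S_q$ to be $\mathcal A^\circ(\mathbb F_q)$ intersected with the dense open on which this holds, minus (if some $m_i=1$) the inverse of the character attached to that $\rho_i$, at most $O(q^{d-1})$ characters are excluded, so $|S_q| = (1-O(1/q))q^d$. Write $\G := \bigoplus_{i=1}^r \G_i$, with geometric monodromy group $G_{\mathrm{geom}} \subseteq \prod_{i=1}^r GL(N_i)$.

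Next I would reduce to monodromy. Put $g := f - \langle f\rangle\circ\det$, so the quantity in the theorem equals $\frac{1}{|S_q|}\sum_{\Lambda\in S_q} g(\varphi_{\Lambda,1},\dots,\varphi_{\Lambda,r})$. Expanding $g$ by the Peter--Weyl theorem into irreducible characters $\chi_{\sigma_1\boxtimes\cdots\boxtimes\sigma_r}$ of $\prod_{i=1}^r U(N_i)$, the definition of $\langle\cdot\rangle$ forces $g$ to be orthogonal to every character in which each $\sigma_i$ is a power of the determinant, so $g$ lies in the closed span of those $\chi_{\sigma_1\boxtimes\cdots\boxtimes\sigma_r}$ with some $\sigma_i$ nontrivial on $SU(N_i)$. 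For such a character, $\chi_{\sigma_1\boxtimes\cdots\boxtimes\sigma_r}(\varphi_{\Lambda,1},\dots,\varphi_{\Lambda,r}) = \tr\big(\Frob_\Lambda \mid \bigotimes_{i=1}^r\sigma_i(\G_i)\big)$, and the Grothendieck--Lefschetz trace formula gives $\sum_{\Lambda\in\mathcal A^\circ(\mathbb F_q)}\tr\big(\Frob_\Lambda \mid \bigotimes_i\sigma_i(\G_i)\big) = \sum_j(-1)^j\tr\big(\Frob_q \mid H^j_c(\mathcal A^\circ_{\Fq},\bigotimes_i\sigma_i(\G_i))\big)$. If $\prod_i SL(N_i)\subseteq G_{\mathrm{geom}}$, then the restriction of $\bigotimes_i\sigma_i(\G_i)$ to $\prod_i SL(N_i)$ is a nontrivial irreducible representation (no $\sigma_i$ being a determinant power), so this sheaf has no geometric coinvariants and $H^{2d}_c$ vanishes; by Deligne's weight bound the remaining $H^j_c$ have weight $\le j \le 2d-1$, with dimension bounded independently of $q$. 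Hence $\frac{1}{|S_q|}\sum_{\Lambda\in S_q}\chi_{\sigma_1\boxtimes\cdots\boxtimes\sigma_r}(\varphi_{\Lambda,1},\dots,\varphi_{\Lambda,r}) = O(q^{-1/2})$, the discrepancy between $S_q$ and $\mathcal A^\circ(\mathbb F_q)$ contributing only $O(1/q)$; approximating the continuous $g$ uniformly by finite sums of such characters then yields $\frac{1}{|S_q|}\sum_{\Lambda\in S_q} g(\varphi_{\Lambda,1},\dots,\varphi_{\Lambda,r})\to 0$. The theorem is thereby reduced to showing $\prod_{i=1}^r SL(N_i) \subseteq G_{\mathrm{geom}}$.

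For this I would first treat the factors individually, aiming at $SL(N_i)\subseteq G_{\mathrm{geom},i}$, the geometric monodromy group of $\G_i$. The strategy is to analyze $\G_i$ near the boundary of $\mathcal A^\circ$ in a suitable compactification: a generic degeneration of the Witt vector character $\Lambda$ (lowering its break at $\infty$, or specializing a Witt coordinate) produces, from the minimal drop of the cohomology, a quasi-unipotent pseudoreflection-type element of $G_{\mathrm{geom},i}$, and also wild — hence infinite-order — local monodromy, so $G_{\mathrm{geom},i}$ is infinite and contains such an element. Together with the (Lie-)irreducibility of $\G_i$ and Katz's classification of monodromy groups of this shape, this forces $G_{\mathrm{geom},i}^0$ to be $SL(N_i)$, $Sp(N_i)$ or $SO(N_i)$ in its standard representation, after the routine elimination of imprimitive (induced) cases. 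The symplectic and orthogonal alternatives are excluded because $\G_i$ is not self-dual: the wild twisting breaks the symmetry, as is visible in the functional equation, which relates $L(\mathbb F_q(x),\rho_i\otimes\Lambda,s)$ to $L(\mathbb F_q(x),\rho_i^\vee\otimes\Lambda^{-1},1-s)$ rather than to itself — reflecting that $\G_i^\vee$ is, geometrically, the pullback of the $\rho_i^\vee$-family under the inversion involution $\tau$ of $\mathcal A$, with $\tau\ne\mathrm{id}$ since $d>0$. Hence $G_{\mathrm{geom},i}^0 = SL(N_i)$. For independence across $i$ one has $G_{\mathrm{geom}}\subseteq\prod_i G_{\mathrm{geom},i}$, and a Goursat argument using the simplicity of the $PSL(N_i)$ shows that either $\prod_i SL(N_i)\subseteq G_{\mathrm{geom}}$ or, for some $i\ne j$, $\G_i$ is geometrically isomorphic, up to a rank one twist on $\mathcal A^\circ$ and a finite ambiguity, to $\G_j$ or to $\G_j^\vee$; comparing the local monodromy along the boundary of $\mathcal A^\circ$ — which records the local invariants of $\rho_i$ and $\rho_j$ at their ramification points — and disposing of the dual case via $\tau$ once more, this would force $\rho_i\cong\rho_j$ geometrically, contrary to hypothesis. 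This establishes $\prod_{i=1}^r SL(N_i)\subseteq G_{\mathrm{geom}}$, and with it the theorem.

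The \emph{main obstacle} is the individual big-monodromy input $SL(N_i)\subseteq G_{\mathrm{geom},i}$: locating the right degeneration inside the Witt vector character space and computing its local monodromy precisely enough to extract the pseudoreflection-type generator is delicate, especially when $m_i = \dim\rho_i > 1$, where the cohomology does not drop by a single dimension and a more careful argument — possibly via an auxiliary family, such as one built from $\rho_i\otimes\rho_i^\vee$ — is required. This is also where the hypothesis $d\ge 4$ enters, both to make $\mathcal A$ large enough for the requisite degenerations and to secure the cohomological vanishing statements; the Peter--Weyl reduction, the invocation of Weil~II, and the Goursat argument are comparatively formal once the monodromy is in hand.
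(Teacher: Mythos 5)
Your overall architecture — passing to middle extension sheaves, organizing the twists into a family $\G_i$ over the moduli space of Witt vector characters, reducing via Peter--Weyl, Grothendieck--Lefschetz and Weil~II to the statement that the geometric monodromy of $\bigoplus_i \G_i$ contains $\prod_i SL(N_i)$, and then appealing to Goursat for independence — matches the paper's plan for this theorem and for the sheaf-theoretic Theorem~\ref{main-geometric} it rests on. (Strictly speaking, the paper's proof of Theorem~\ref{main-arithmetic} itself is just the formal reduction to Theorem~\ref{main-geometric}, via the Artin $L$-function formula and the observation that $\F_i\otimes\L_\Lambda$ is middle extension; you are implicitly re-proving the geometric theorem, which is reasonable.)

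The genuine gap is in the big-monodromy input $SL(N_i)\subseteq G_{\mathrm{geom},i}$. You propose to extract a pseudoreflection from a generic degeneration of the Witt character and then invoke Katz's classification, ruling out $Sp$/$SO$ by a non-self-duality argument. You already flag that this is delicate when $m_i>1$, and I do not think it can be repaired along those lines: when the top Witt coordinate $a_d$ specializes so that the break at $\infty$ drops, the Swan conductor of $\F_i\otimes\L_{univ,y}$ at $\infty$ drops by (at least) $m_i$ — see Lemma~\ref{swan-lemma} — so the fiber rank of $\G_i$ falls by $m_i$, not by $1$; the local monodromy at such a boundary stratum has drop $\geq m_i$ and is not a pseudoreflection. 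Your suggested auxiliary family built from $\rho_i\otimes\rho_i^\vee$ studies a different sheaf and does not obviously yield information about the monodromy of $\G_i$ itself; this fix is not spelled out enough to assess. The paper sidesteps this entirely with a different tool: Lemma~\ref{diophantinepart} computes the $2k$-th moment of $\sum_x \tr(\Frob,\F_i,x)\Lambda(1-tx)$ to be $k!$ for $k\le d$, so $\dim\bigl(V^{\otimes 4}\otimes V^{\vee\otimes 4}\bigr)^{G_{\mathrm{geom},i}}=4!$ (Proposition~\ref{moments}), and then Guralnick--Tiep's resolution of Larsen's conjecture (which is where CFSG enters, and where $d\ge 4$ is actually used) forces $SL(N_i)\subseteq G_{\mathrm{geom},i}$ with no need to produce a pseudoreflection or to separately exclude $Sp$/$SO$. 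Likewise your Goursat step is only heuristically supported — you would need to rule out $\G_i\cong\G_j\otimes(\mathrm{rank\ one})$ and $\G_i\cong\G_j^\vee\otimes(\mathrm{rank\ one})$ on $\mathcal A^\circ$, and ``comparing local invariants at the boundary'' is not yet an argument — whereas the paper disposes of this with the explicit cross-moment computation $\dim\bigl(V_i\otimes V_i^\vee\otimes V_j\otimes V_j^\vee\bigr)^{G_{\mathrm{geom}}}=1$ (Lemmas~\ref{indio}, \ref{mmtwo}, \ref{indmoments}), which uses the non-isomorphism hypothesis directly through \cite[Theorem 1.7.2(2)]{mmp}. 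So while your reduction and framework are sound, the central monodromy computation as you describe it would not go through for $m_i>1$; the moment-plus-Guralnick--Tiep route is the missing idea.
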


In the language of sheaves:

\begin{theorem}\label{main-geometric} Let $r$ be a natural number, $\ell$ a prime not dividing $q_0$, and $\iota: \overline{\mathbb Q}_\ell \to \mathbb C$ an embedding.  For all $i$ from $1$ to $r$, let $m_i$ be a natural number and let $\mathcal F_i$ be a middle extension $\overline{\mathbb Q}_\ell$-sheaf on $\mathbb A^1_{\mathbb F_{q_0}}$, $\iota$-pure of weight $w_i$. Assume that $\mathcal F_i$ are all geometrically irreducible and that $\mathcal F_i$ is not geometrically isomorphic to $\F_j$ unless $i=j$.  View characters $\Lambda:   W_{d,q}\to \mathbb \overline{\mathbb Q}_\ell^\times$ as characters of the abelianization of the Galois group of $\mathbb F_q(x)$, unramified away from infinitity, by the Artin map, and let $\mathcal L_\Lambda$ be the associated lisse sheaves on $\mathbb A^1$. 

Then there exist natural numbers $N_1,\dots, N_r$ and a set $S_q$ of characters $\Lambda:   W_{d,q}\to \mathbb \overline{\mathbb Q}_\ell^\times$, of size $(1- O(1/q)) q^d$ , such that for all $\Lambda \in S_q$, for all $i$ from $1$ to $r$, $H^1_c ( \mathbb A^1_{\overline{\mathbb F}_q}, \mathcal F_i \otimes \mathcal L_\Lambda)$ is $N_i$-dimensional and pure of $\iota$ weight $w_i+1$, all other compactly supported cohomology groups of this sheaf vanish, and $j_!(\mathcal F_i \otimes \mathcal L_\lambda) =j_*(\mathcal F_i \otimes \mathcal L_\lambda)  $ for $j: \mathbb A^1 \to \mathbb P^1$ the open immersion. 

Let $\varphi_{\Lambda,i}$ be the semisimplified conjugacy class of $\Frob_q / q^{ \frac{w_i+1}{2}}$ acting on $H^1_c ( \mathbb A^1_{\overline{\mathbb F}_q}, \mathcal F_i \otimes \mathcal L_\Lambda)$, mapped from $GL_{N_i}( \overline{\mathbb Q}_\ell)$ to $GL_{N_i}(\mathbb C)$ by $\iota$, and conjugated to lie in $U(N_i)$.

Assume in addition that $d \geq 4$. Let $f$ be a continuous, conjugacy-invariant function on $\prod_{i=1}^r U(N_i)$. Then 
\[  \lim_{q \to \infty} \left( \frac{ \sum_{\Lambda \in S_q } f ( \varphi_{\Lambda,1},\dots, \varphi_{\Lambda,r}) } { \left| S_q \right|} - \frac{ \sum_{\Lambda \in S_q } \langle f \rangle ( \det \varphi_{\Lambda,1},\dots, \det \varphi_{\Lambda,r}) } { \left| S_q \right|} \right) = 0 . \]

\end{theorem}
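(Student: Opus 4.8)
The plan is to prove the geometric version (Theorem \ref{main-geometric}) directly and then deduce the other two by the dictionary between middle extension sheaves, Galois representations, and automorphic forms (the last via Lafforgue). The heart of the matter is the following: equip the space of characters $\Lambda$ with the structure of $\overline{\mathbb F}_q$-points of a variety (a quotient of the group scheme $W_{d}$ whose $\mathbb F_q$-points parametrize the $\Lambda$, or rather its Cartier dual / a suitable covering on which the construction $\Lambda \mapsto \mathcal L_\Lambda$ is algebraic). Over this parameter space one wants to build, for each $i$, a lisse sheaf $\mathcal H_i$ whose trace of Frobenius at the point $\Lambda$ is $-\tr(\Frob_q \mid H^1_c(\mathbb A^1_{\overline{\mathbb F}_q}, \mathcal F_i \otimes \mathcal L_\Lambda))$, and more generally to realize $f(\varphi_{\Lambda,1},\dots,\varphi_{\Lambda,r})$ — for $f$ a character of an irreducible representation of $\prod_i U(N_i)$ — as the trace function of a sheaf obtained from the $\mathcal H_i$ by tensor/Schur-functor operations. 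Then the sum over $\Lambda \in S_q$ is computed by the Grothendieck--Lefschetz trace formula applied to this sheaf on the parameter space, and the equidistribution statement becomes the assertion that the relevant cohomology of the parameter space is small unless the representation is forced by the determinant constraint to contribute, in which case it contributes exactly the $\langle f\rangle$ term. The key input is a big-monodromy theorem: the geometric monodromy group of the family $\{H^1_c(\mathbb A^1, \mathcal F_i \otimes \mathcal L_\Lambda)\}_\Lambda$ over the parameter space contains $\prod_i SL(N_i)$ (equivalently, $SU(N_i)$ after passing to the compact form), so that the only invariants of $\prod_i SU(N_i)$ come from the determinants — this is precisely what makes the independence statement (different $i$) work, and it is where the hypothesis that the $\mathcal F_i$ are pairwise non-isomorphic and geometrically irreducible is used.

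First I would establish the preliminary facts packaged in the theorem statement: that for $\Lambda$ outside a set of size $O(q^{d-1})$, the sheaf $\mathcal F_i \otimes \mathcal L_\Lambda$ has vanishing $H^0_c$ and $H^2_c$ and satisfies $j_! = j_*$. Vanishing of $H^2_c$ is automatic from geometric irreducibility of $\mathcal F_i$ once $\mathcal L_\Lambda$ is geometrically nontrivial (which holds for all but $O(1)$ characters); $H^0_c$ of a middle extension on $\mathbb A^1$ always vanishes. The condition $j_!(\mathcal F_i\otimes\mathcal L_\Lambda) = j_*(\mathcal F_i\otimes\mathcal L_\Lambda)$ at $\infty$ requires that the local monodromy of $\mathcal L_\Lambda$ at $\infty$ be "generic enough" relative to the breaks and Swan conductor of $\mathcal F_i$ at $\infty$ — since $\mathcal L_\Lambda$ has conductor $\infty^{d+1}$ and $d$ is large, a Witt-vector / Artin--Schreier--Witt analysis of the characters (using the isomorphism $W_{d,q} \cong (\mathbb F_q[t]/t^{d+1})^\times/\mathbb F_q^\times$ and Lemma \ref{cft}) shows this holds for all but a positive proportion $O(1/q)$ of $\Lambda$, which also pins down the Euler characteristic and hence the common value $N_i = \dim H^1_c$ by Grothendieck--Ogg--Shafarevich. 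This is also where the hypothesis $d\ge 4$ enters: one needs enough "room" in the ramification at $\infty$ both to make these genericity conditions cut out only an $O(1/q)$ fraction and to run the monodromy argument.

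The main work — and the main obstacle — is the big-monodromy computation. I would study the family over (an open dense subscheme of) the parameter space of $\Lambda$'s, or better, introduce an auxiliary family interpolating the characters $\mathcal L_\Lambda$ (e.g.\ via a universal Artin--Schreier--Witt-type sheaf on $\mathbb A^1 \times (\text{parameter space})$), and compute the geometric monodromy group $G_i \subseteq GL_{N_i}$ of each $\mathcal H_i$ as well as the joint monodromy $G \subseteq \prod_i G_i$. The strategy is: (i) show each $G_i$ is big — it contains $SL_{N_i}$ — by a combination of Katz's methods (classification of possible monodromy groups via the action of inertia, Larsen's alternative or a moment computation, ruling out finite, tori-normalizer, and small-group cases using the structure of the local monodromy of $\mathcal F_i\otimes\mathcal L_\Lambda$ at $\infty$ coming from the highly ramified $\mathcal L_\Lambda$); (ii) show the $G_i$ are "independent," i.e.\ $G$ surjects onto $\prod_i G_i$ modulo centers, by Goursat--Kolchin--Ribet: any isomorphism between the reductions of two factors would force a geometric isomorphism (up to twist and duality) between $\mathcal F_i \otimes \mathcal L_\Lambda$ and $\mathcal F_j \otimes \mathcal L_\Lambda$ as families, hence between $\mathcal F_i$ and $\mathcal F_j$, contradicting the hypothesis — here one must be careful to track the twists by powers of the determinant character, which is exactly the content of the $\alpha^{\deg}$ clause in the automorphic formulation and explains why the theorem gives equidistribution of the $\varphi_{\Lambda,i}$ only \emph{relative to their determinants}. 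Granting the big-monodromy statement, the endgame is routine: expand $f$ in characters of irreducibles of $\prod_i U(N_i)$, for each non-determinant irreducible the associated sheaf on the parameter space has no geometric invariants or coinvariants by big monodromy, so its $H^0$ and $H^{2\dim}$ vanish and the trace sum over $S_q$ is $O(q^{(\dim - 1/2)})$ by Deligne's Riemann Hypothesis, which is $o(|S_q|) = o(q^d)$; summing up and controlling the error from the excluded $\Lambda\notin S_q$ (a set of size $O(q^{d-1})$, negligible since $f$ is bounded) gives the claimed limit, with the determinant-depending part reproducing exactly $\langle f\rangle(\det\varphi_{\Lambda,1},\dots,\det\varphi_{\Lambda,r})$ by the very definition of $\langle f\rangle$. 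The deduction of Theorems \ref{main-arithmetic} and \ref{main-automorphic} is then formal.
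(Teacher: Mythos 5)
Your proposal correctly identifies the architecture of the paper's argument: parametrize the characters $\Lambda$ by the $\mathbb F_q$-points of a variety (the paper takes $\mathbb A^d$, with a universal Artin--Schreier--Witt sheaf $\L_{univ}$ on $\mathbb A^1\times\mathbb A^d$; Brylinski's theorem gives the bijection of Lemma \ref{universal}), form $\mathcal G_i = R^1pr_{2!}(pr_1^*\F_i\otimes\L_{univ})$, establish big geometric monodromy $G_{geom}\supseteq\prod SL_{N_i}$, and finish by Deligne's equidistribution argument with the determinant-dependent part producing $\langle f\rangle$. Your two moving parts for big monodromy --- a per-factor criterion and Goursat--Kolchin--Ribet for independence --- are exactly the paper's Corollaries \ref{monodromy} and 5.6--5.7.

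The place where your sketch is genuinely at risk is the independence step. You propose to argue that an identification of the $PGL_{N_i}$ and $PGL_{N_j}$ factors would force a geometric isomorphism (up to twist and duality) ``between $\mathcal F_i\otimes\L_\Lambda$ and $\mathcal F_j\otimes\L_\Lambda$ as families, hence between $\mathcal F_i$ and $\mathcal F_j$.'' But such an identification only gives you an isomorphism at the level of the transformed sheaves $\mathcal G_i$, $\mathcal G_j$ (up to twist/dual); recovering an isomorphism of the source sheaves $\mathcal F_i$, $\mathcal F_j$ requires some injectivity or inversion property of the assignment $\F\mapsto\G$ that your sketch does not supply and which is not an immediate consequence of the setup. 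The paper sidesteps this by computing, via a second-moment character sum (Lemmas \ref{indio} and \ref{indmoments}), that $\dim (V_i\otimes V_i^\vee\otimes V_j\otimes V_j^\vee)^{G_{geom}}=1$ exactly, which is what Goursat--Kolchin--Ribet needs; the hypothesis that $\F_i\not\cong\F_j$ geometrically enters through the orthogonality statement $\lim_q q^{-(w_1+w_2)/2-1}\sum_x \tr(\Frob,\F_i,x)\overline{\tr(\Frob,\F_j,x)}=0$ (Lemma \ref{secondmoment} and \cite[Theorem 1.7.2(2)]{mmp}), not through an inverse transform. Similarly, for the per-factor step, the paper does not perform any case analysis on inertia at $\infty$: it computes the $2k$-th moment to be $k!$ (Lemmas \ref{diophantinepart} and \ref{mmp}) and feeds $k=4$ directly into the Guralnick--Tiep solution of Larsen's conjecture; this is what forces $d\ge 4$ (you need $k\le d$ in the moment lemma), not ``room in the ramification'' for the genericity conditions, which only cost $O(1/q)$ of the $\Lambda$ via the Swan-conductor locus $T$ in the top Witt coordinate $a_d$ (Lemma \ref{swan-lemma}) together with Deligne's semicontinuity. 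So: right skeleton, but the moment computations that make both monodromy claims rigorous --- and the precise reason $d\ge 4$ --- are the content you would still need to produce.
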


Our argument follows closely the method of Katz in \cite{wvqkr}. However, at several points we need different arguments to handle the additional generality, and we often find it convenient to use different notation. Thus we will mostly be unable to reuse lemmas from \cite{wvqkr}, although we do reuse many key ideas.

These theorems rely on the classification of finite simple groups, via \cite[Theorem 1.4]{guralnicktiep}.

A similar statement was proven by Hall, Keating, and Roditty-Gershon \cite[Theorems 5.0.8 and 7.0.1]{hkr}, in a different setting. Rather than working with Dirichlet characters modulo a high power of a degree one prime, they work with Dirichlet characters modulo a squarefree divisor. Because of this, there is no logical relationship between the results, and the proofs are somewhat different, as our proof follows the method of \cite{wvqkr} and theirs follows \cite{pdc}. We would like to mention some advantages of our method:  We can give independence results for the joint distribution of the $L$-functions of multiple representations twisted by the same character, our result applies without any local conditions on the representation, and least importantly, the minimum conductor of the character in our case is smaller, and is independent of the rank of the representation.

As a consequence of Theorem \ref{main-automorphic} it is possible to describe the distribution of $L( \pi \otimes \Lambda,s)$ for many Eisenstein series $\pi$, by decomposing them into cusp forms. Using this, and the methods of Keating, Rodgers, Roditty-Gershon, and Rudnick \cite{krrr}, it should be possible to estimate the variance in short intervals of the ``Fourier coefficients" of these Eisenstein series. The main new phenomenon that occurs is that the sum of the Eisenstein series against a short interval character can be expressed in terms of the trace of the Frobenius conjugacy class acting on a representation that does not factor through a product of projective unitary groups, so equidistribution and independence statements in the projective unitary group are not sufficient. Applying our theorems requires calculating the average $\langle f \rangle$ of the trace of this representation, which is equivalent by Fourier analysis to computing the average of the trace of this representation multiplied by integer powers of the determinant. The evaluation of this average should be similar to the evaluation of the average trace of a representation in \cite{krrr}, but unbalanced, and should lead to a lower-order term. As another application, it should be possible to combine this result with the methods of \cite{hkr} to calculate the variance of sums in short intervals of the generalized von Mangoldt functions associated to irreducible Galois representations, and to calculate the covariance of sums in short intervals of two different generalized von Mangoldt functions.

It is likely possible to prove these results with some degree of uniformity in the choice of sheaves, allowing for applications ``over $\mathbb Z$" (as in \cite{hkr}), but we do not pursue that here. Because we do not have uniformity, it is necessary to work with the asymptotics as $q$ goes to $\infty$ of the base change from $\mathbb F_{q_0}$ to $\mathbb F_q$ of a fixed object.

I would like to thank Chris Hall, Peter Humphries, and Edva Roditty-Gershon for helpful conversations on the topic of this paper and Ofir Gorodetsky, Emmanuel Kowalski, and Zeev Rudnick for helpful comments on an earlier draft.  The author was supported by Dr. Max R\"{o}ssler, the Walter Haefner Foundation and the ETH Z\"urich Foundation.

\section{Preliminaries}

\subsection{Class field theory}

 Let $\mathbb F_q$ be a finite field. Let $F = \mathbb F_q(x)$, let $\mathbb A_F^\times$ be the ideles of $F$, and let $K( \infty^{d+1})$ be the subgroup of $\mathbb A_F^\times$ consisting of elements which are units at each place and are congruent to $1$ modulo the $d+1$st power of the maximal ideal at $\infty$. Let $\Frob_0$ be the adele which is $x$ at the place $0$ and $1$ at every other place.

\begin{lemma}\label{cft} There exists a unique isomorphism $\mathbb A_F^\times  / F^\times K(\infty^{d+1}) \langle \Frob_0 \rangle=\left(  \mathbb F_q[t]/t^{d+1} \right)^\times/ \mathbb F_q^\times = W_{d,q} $ that restricted to the group of units $\mathbb F_q[[t]]^\times$ at $\infty$ (where we fix a uniformizer $t= x^{-1}$) is the natural projection $\mathbb F_q[[t]]^\times \to \left(  \mathbb F_q[t]/t^{d+1} \right)^\times/ \mathbb F_q^\times $. \end{lemma}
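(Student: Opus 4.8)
The plan is to compute the idele class group modulo the given subgroup explicitly, using the standard exact sequence of class field theory for $\mathbb P^1$. First I would recall that $\mathbb A_F^\times / F^\times$ fits into an exact sequence with the group of units $\prod_v \mathcal O_v^\times$ at all places, whose cokernel is the divisor class group, which for $F = \mathbb F_q(x)$ is just $\mathbb Z$ via the degree map. The point $(x)$ has degree one, so $\Frob_0$, whose divisor is $[0]$, generates a complement to $\prod_v \mathcal O_v^\times$ in $\mathbb A_F^\times/F^\times$; quotienting by $\langle \Frob_0 \rangle$ therefore leaves us with the image of the unit ideles. Next I would analyze how $F^\times$ meets the units: by the product formula and the fact that the only global units of $\mathbb F_q[x]$ of degree zero are $\mathbb F_q^\times$, the subgroup $F^\times \cap \left( \prod_v \mathcal O_v^\times \langle \Frob_0 \rangle \right)$, after projecting to $\prod_v \mathcal O_v^\times$, is exactly $\mathbb F_q^\times$ embedded diagonally. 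Combined with the strong approximation / weak approximation statement that $\prod_v \mathcal O_v^\times \cap F^\times K(\infty^{d+1})$ near $\infty$ only imposes conditions at $\infty$, the quotient $\prod_v \mathcal O_v^\times / \left( \mathbb F_q^\times \cdot K(\infty^{d+1}) \right)$ collapses to $\mathcal O_\infty^\times / \left( \mathbb F_q^\times \cdot (1 + \mathfrak m_\infty^{d+1}) \right) = \left( \mathbb F_q[[t]]/t^{d+1} \right)^\times / \mathbb F_q^\times = W_{d,q}$, with $t = x^{-1}$.

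Concretely, I would organize this as: (1) set $U = \prod_v \mathcal O_v^\times$ and show $\mathbb A_F^\times = F^\times \cdot U \cdot \langle \Frob_0 \rangle$ with the three factors having pairwise intersections controlled by $\mathbb F_q^\times$ (this is where the degree-one prime $(x)$ is used, to split the degree map); (2) deduce a surjection $\mathcal O_\infty^\times \to \mathbb A_F^\times / F^\times K(\infty^{d+1})\langle \Frob_0\rangle$ by projecting a unit idele to its component at $\infty$ after adjusting by a global element — since any $u \in U$ can be multiplied by a suitable $a \in \mathbb F_q[x]$ (invertible away from $\infty$, and by Chinese remainder theorem adjustable mod each finite place) to become trivial at all finite places, at the cost of changing its class by $\langle \Frob_0\rangle$ only if $\deg a \neq 0$; (3) identify the kernel of this surjection as $\mathbb F_q^\times \cdot (1 + \mathfrak m_\infty^{d+1})$, giving the isomorphism $W_{d,q} \xrightarrow{\sim} \mathbb A_F^\times / F^\times K(\infty^{d+1})\langle\Frob_0\rangle$; (4) check that this isomorphism, by construction, restricts to the natural projection on $\mathbb F_q[[t]]^\times = \mathcal O_\infty^\times$, which forces uniqueness since $\mathcal O_\infty^\times$ already surjects onto the quotient.

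The main obstacle is step (1)–(2): the bookkeeping of showing that adjusting a unit idele by a global rational function to kill its components at all finite places introduces no ambiguity beyond the already-quotiented $\langle \Frob_0\rangle$ and $\mathbb F_q^\times$. This amounts to the elementary but slightly fiddly fact that $\mathbb F_q[x]$ has class number one and unit group $\mathbb F_q^\times$, so that the only obstruction to clearing finite ramification is the degree, which is exactly absorbed by $\Frob_0$ since $(x)$ has degree one. Once this is set up cleanly, uniqueness is immediate from the observation that $\mathbb F_q[[t]]^\times \to W_{d,q}$ is already surjective (its image generates, since $\langle\Frob_0\rangle$ and the finite units are killed), so any two isomorphisms agreeing on $\mathbb F_q[[t]]^\times$ coincide.
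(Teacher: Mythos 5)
Your proposal is correct and follows essentially the same route as the paper: explicit computation of the idele class group of $\mathbb P^1$, using that $\operatorname{Frob}_0$ has degree one so kills the divisor class group $\mathbb Z$, that global units are just $\mathbb F_q^\times$, and that $K(\infty^{d+1})$ collapses the finite places to leave $\mathcal O_\infty^\times/(\mathbb F_q^\times(1+\mathfrak m_\infty^{d+1}))$. One small inefficiency: in your step (2) the CRT adjustment by a global polynomial $a$ is unnecessary (and as stated would take you out of the unit ideles for $\deg a > 0$), since $K(\infty^{d+1})$ already contains the full unit group at every finite place, so any unit idele is directly equivalent mod $K(\infty^{d+1})$ to the idele supported only at $\infty$.
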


\begin{proof} Any element of $\mathbb A_F^\times$ has finitely many zeroes and poles. There is an element of $F^\times$, unique up to multiplication by $\mathbb F_q^\times$, which has exactly the same zeroes and poles, except at the place $0$.  Hence every class is $\mathbb A_F^\times / F^\times $ has a representative which is a unit at each place but $0$, unique up to multiplication by $\mathbb F_q^\times$. Thus every class in $\mathbb A_F^\times / F^\times \langle \Frob_0 \rangle$ has a representative which is a unit at each place, unique up to multiplication by $\mathbb F_q^\times$.

The group of ideles which are units at each place modulo $K(\infty^{d+1})$ is naturally isomorphic to the group of units $\mathbb F_q[[t]]^\times$ at $\infty$ modulo the units congruent to $1$ modulo $t^{d+1}$, and this quotient is $\left( \mathbb F_q[t]/t^{d+1}\right)^\times$. Modding out by $\mathbb F_q^\times$, we obtain an isomorphism, which because each class contains a representative in the unit group at $\infty$, must be unique. 

\end{proof}

\begin{lemma}\label{cft-galois}  The quotient $\mathbb A_F^\times  / F^\times K(\infty^{d+1}) \langle \Frob_0 \rangle$ is the Galois group of the maximal abelian extension of $\mathbb F_q(t)$, unramified away from $\infty$, with conductor exponent at most $d+1$ at $\infty$, and split over $0$.  \end{lemma}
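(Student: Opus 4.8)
The plan is to deduce this directly from global class field theory for $F=\mathbb F_q(x)=\mathbb F_q(t)$ together with the computation of the quotient already carried out in Lemma~\ref{cft}. Recall that the Artin reciprocity map identifies, for each open subgroup $U\subseteq\mathbb A_F^\times$ of finite index containing $F^\times$, the quotient $\mathbb A_F^\times/U$ with the Galois group $\Gal(L_U/F)$ of a finite abelian extension $L_U/F$, and that this gives an inclusion-reversing bijection between such subgroups $U$ and finite abelian subextensions of $F^{\mathrm{ab}}/F$. By Lemma~\ref{cft} the subgroup $U=F^\times K(\infty^{d+1})\langle\Frob_0\rangle$ contains the open subgroup $K(\infty^{d+1})$ and has finite index $q^d$, so it is of this type; it therefore suffices to identify $L_U$ with the asserted maximal extension.

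For this I would invoke the standard local dictionary: writing $F_v^\times\hookrightarrow\mathbb A_F^\times$ for the embedding at a place $v$, the extension $L_U$ is unramified at $v$ iff $\mathcal O_v^\times\subseteq U$, its conductor exponent at $v$ is the least $n\ge 0$ with $1+\mathfrak m_v^n\subseteq U$ (where $1+\mathfrak m_v^0:=\mathcal O_v^\times$), and $v$ splits completely in $L_U$ iff $F_v^\times\subseteq U$. Since $K(\infty^{d+1})=(1+\mathfrak m_\infty^{d+1})\times\prod_{v\ne\infty}\mathcal O_v^\times$, the group $U$ contains $\mathcal O_v^\times$ for every $v\ne\infty$ and contains $1+\mathfrak m_\infty^{d+1}$; hence $L_U$ is unramified away from $\infty$ with conductor exponent at most $d+1$ at $\infty$. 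Moreover $U$ contains both $\mathcal O_0^\times$ and the idele $\Frob_0$, which is $x$ at the place $0$ and a unit elsewhere; since $x$ is a uniformizer at $0$, these together generate the full local group $F_0^\times$, so $F_0^\times\subseteq U$ and the place $0$ splits completely in $L_U$.

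It remains to check maximality: if $L/F$ is any finite abelian extension unramified away from $\infty$, with conductor exponent at most $d+1$ at $\infty$, in which $0$ splits completely, then the associated subgroup $U_L\supseteq F^\times$ contains $\mathcal O_v^\times$ for all $v\ne\infty$, contains $1+\mathfrak m_\infty^{d+1}$, and contains $F_0^\times$, hence contains the subgroup these generate together with $F^\times$, which is exactly $U=F^\times K(\infty^{d+1})\langle\Frob_0\rangle$ (the factor $\langle\Frob_0\rangle$ supplying the uniformizer at $0$ that $K(\infty^{d+1})$ lacks). By the inclusion-reversing correspondence $L\subseteq L_U$, so $L_U$ is the maximal extension with the stated properties and $\mathbb A_F^\times/U=\Gal(L_U/F)$ as claimed. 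There is no serious obstacle here; the only points needing care are the verification that $U$ is open of finite index --- which is precisely the content of Lemma~\ref{cft} and lets one apply the Artin map as an honest isomorphism on the quotient --- and matching the normalization so that ``conductor exponent at most $d+1$'' corresponds to $1+\mathfrak m_\infty^{d+1}\subseteq U$ rather than an off-by-one variant.
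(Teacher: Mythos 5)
Your proof is correct and follows the same route the paper has in mind: the paper's own justification is simply the one-line observation that this is a restatement of the Artin map, and you have supplied the standard class-field-theoretic details (openness of $U$ via Lemma~\ref{cft}, the local dictionary for ramification/conductor/splitting, and the inclusion-reversing correspondence for maximality) that the paper leaves implicit.
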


\begin{proof} This is simply a description of the Artin map in class field theory. \end{proof} 

It follows that $W_{d,q}$ is a quotient of $\pi_1(\mathbb A^1_{\mathbb F_q})$, as $\pi_1$ is equal to the Galois group of the maximal extension unramified away from $\infty$. Hence any character $\Lambda: W_{d,q} \to \Ql^\times$ defines a character of $\pi_1(\mathbb A^1_{\mathbb F_q}) \to \Ql^\times$. Let let $\L_\Lambda$  (called $\L_{\Lambda(1-tX)}$ in \cite[pp. 1-2]{wvqkr}) be the unique rank one lisse sheaf whose monodromy representation is this character.

\begin{lemma}\label{norm formula} For $v$ a closed point in $\mathbb A^1_{\mathbb F_q}$ defined by an irreducible polynomial $f(x)$, $\Frob_v$ acts on $\L_\Lambda$ with eigenvalue $\Lambda ( f(t^{-1}) t^{\deg f})$.

In particular, for a point $x \in \mathbb F_q = \mathbb A^1(\mathbb F_q)$, $\Frob_{q,x}$ acts on $\L_\Lambda$ with eigenvalue $\Lambda( 1-xt)$. \end{lemma}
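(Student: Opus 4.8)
The plan is to unwind the class field theory identifications of Lemmas~\ref{cft} and~\ref{cft-galois} and then track the Frobenius element at $v$ through to an element of $W_{d,q}$ on which $\Lambda$ is evaluated. Since $f(x)$ is irreducible it is a uniformizer at $v$, so, by Lemma~\ref{cft-galois} and the definition of $\L_\Lambda$, the action of $\Frob_v$ on $\L_\Lambda$ is $\Lambda$ applied to the image in $W_{d,q}$ of the idele $\pi_v$ that equals $f(x)$ at $v$ and $1$ at every other place (up to the normalization relating geometric Frobenius and the Artin map, discussed below). So the task is to compute the class of $\pi_v$ under the isomorphism of Lemma~\ref{cft}.

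To do this I follow the recipe in the proof of Lemma~\ref{cft}: I replace $\pi_v$ by an idele in the same class that is a unit at every place. The rational function $g := f(x)\,x^{-\deg f} \in F^\times$ has divisor $[v] - (\deg f)[0]$, since $\operatorname{div}(f(x)) = [v] - (\deg f)[\infty]$ and $\operatorname{div}(x) = [0] - [\infty]$. Hence $\pi_v\, g^{-1}$ is a unit at every place except $0$, where it has valuation $\deg f$; multiplying by $\Frob_0^{-\deg f}$ yields an idele $\gamma$ that is a unit everywhere, with $[\gamma] = [\pi_v]$ in $\mathbb A_F^\times / F^\times K(\infty^{d+1})\langle \Frob_0\rangle = W_{d,q}$. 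Its component at $\infty$ is the image of $g^{-1}$ in $\mathbb F_q((t))$; and $g = f(1/t)\,t^{\deg f} = f(t^{-1})\,t^{\deg f}$ is a polynomial in $t$ with nonzero constant term (the leading coefficient of $f$), hence a unit in $\mathbb F_q[[t]]$. By the normalization of the isomorphism in Lemma~\ref{cft}, the class of a unit-everywhere idele in $W_{d,q}$ is the image of its component at $\infty$ under $\mathbb F_q[[t]]^\times \to (\mathbb F_q[t]/t^{d+1})^\times/\mathbb F_q^\times$; so the class of $\pi_v$ is that of $\big(f(t^{-1})\,t^{\deg f}\big)^{-1}$. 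Combined with the normalization relating $\Frob_v$ to the Artin image of the uniformizer idele — the one built into the definition of $\L_\Lambda$, matching Katz's $\L_{\Lambda(1-tX)}$ in \cite{wvqkr}, under which the two inversions cancel — this gives that $\Frob_v$ acts on $\L_\Lambda$ by the scalar $\Lambda\big(f(t^{-1})\,t^{\deg f}\big)$. The final assertion is the case $f(x) = x - x_0$ with $x_0 \in \mathbb F_q$: then $\deg f = 1$ and $f(t^{-1})\,t = (t^{-1} - x_0)\,t = 1 - x_0 t$.

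The main thing to be careful about is the bookkeeping of conventions — the direction of the Artin isomorphism, geometric versus arithmetic Frobenius, and the precise form of the isomorphism of Lemma~\ref{cft} — so that the inverse produced by the substitution $x \mapsto 1/t$ and the inverse coming from the Frobenius normalization combine to give exactly the stated formula (and the degree one case matches \cite{wvqkr}). It is also worth recording that $f(t^{-1})\,t^{\deg f} = \prod_{j=0}^{\deg f - 1}\big(1 - t\,\tilde v^{\,q^{j}}\big)$ for $\tilde v$ any root of $f$, i.e.\ it is the norm of $1 - t\tilde v$ from $W_{d,\,q^{\deg f}}$ to $W_{d,q}$; this explains the name ``norm formula'' and gives an alternative derivation via base change to $\mathbb F_{q^{\deg f}}$ together with the degree one case.
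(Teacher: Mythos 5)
Your proof is correct and follows essentially the same route as the paper's: identify $\Frob_v$ with an idele class, multiply by the diagonal rational function $f(x)x^{-\deg f}$ and a power of $\Frob_0$ to land in the unit ideles, and read off the component at $\infty$ via the normalization of Lemma~\ref{cft}. The only cosmetic difference is that you track the uniformizer idele $\pi_v$ and cancel the resulting inversion against the geometric-Frobenius normalization at the end, whereas the paper states up front that the Artin map sends $\Frob_v$ to the inverse of the uniformizer idele and so never introduces the extra inverse; you could tighten the bookkeeping by citing that fact explicitly rather than appealing to consistency with \cite{wvqkr}. Your closing remark identifying $f(t^{-1})t^{\deg f}$ with $\prod_{j}(1-t\tilde v^{\,q^j})$, and hence with a norm from $W_{d,q^{\deg f}}$, is a nice addition not in the paper and would indeed give an alternative derivation from the degree-one case by base change.
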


\begin{proof} We have defined in Lemma \ref{cft} an isomorphism of $W_{d,q}$ with a certain ray class group, that in Lemma \ref{cft-galois} we wrote explicitly as a Galois group, or as a quotient of the \'{e}tale fundamental group. By construction, the eigenvalue of $\Frob_v$ of $\mathcal L_\Lambda$ is the character $\Lambda$ applied to the element $\Frob_v$ via this pair of isomorphisms.  Thus to check this identity, it suffices to prove that this pair of isomorphisms sends the Frobenius element $\Frob_v$  to $f(t^{-1} ) t^{\deg f}$. 

The class field theory isomorphism between finite quotients of the abelianized Galois group and the idele class group sends $\Frob_v$ to the idele which is $1$ away from $v$ and the inverse of the uniformizer at $v$. We are working in the quotient of the idele class group modulo $F^\times K(\infty^{d+1})$, so we may freely multiply by the diagonal idele $x^{ - \deg f} f(x) \in \mathbb F_q(x)^\times$, obtaining an idele which is in the group of units at each finite place except $0$, has a pole of order $\deg f$ at $0$, and is equal to $f(t^{-1}) t^{\deg f} $ at $\infty$.  We are also working modulo the idele $\Frob_0$, which is $x$ at $0$ and $1$ at every other place, so we may multiply by $x^d$ at $0$, producing an idele which is a unit at each finite place and $f(t^{-1}) t^{\deg f} $. The units at each finite place lie in  $K(\infty^{d+1})$, so our original idele is equivalent to the idele which is $1$ away from $\infty$ and  $f(t^{-1}) t^{\deg f} $. Hence by the defining property of the isomorphism with $W_{d,q}$ from Lemma \ref{cft}, this idele is sent to $f(t^{-1}) t^{\deg f}$ by the isomorphism.

For the special case of degree one points, observe that the  point $x_0 \in \mathbb F_q = \mathbb A^1(\mathbb F_q)$ is defined by the irreducible polynomial $x-x_0$, so the action of $\Frob_{x_0}$ on $\L_{\Lambda}$ is given by $\Lambda ( (t^{-1}- x_0) t) = \Lambda(1-x_0t)$.\end{proof}

\subsection{Moment calculations}

For clarity, we perform the key numerical calculation that will lead to our equidistribution result in the setting of a general sequence of functions, not restricting ourselves to sheaves:

\begin{lemma}\label{diophantinepart} Let $F_q(x)$ be a sequence of functions $F_q: \mathbb F_q \to \mathbb C$ indexed by powers $q$ of $q_0$ such that $F_q(x) = O( q^{w/2})$ for all $q$ and $\lim_{q \to \infty} \frac{\sum_{x\in \mathbb F_q}|F_q(x)|^2}{ q^{w+1}}=1$. If $k\leq d$, then

\[\lim_{q \to \infty} \frac{ 1}{ q^d}  \sum_{ \Lambda: W_{d,q} \to \mathbb C^\times} \frac{ \left| \sum_{x \in \mathbb F_q} F_q(x) \Lambda(1- tx) \right|^{2k}}{q^{k(w+1)}} =   k!\]

\end{lemma}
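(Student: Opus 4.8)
The plan is to expand the $2k$-th power and interchange the order of summation, summing over characters $\Lambda$ first. Writing $|z|^{2k} = z^k \bar z^k$, we have
\[
\sum_{\Lambda} \left| \sum_{x} F_q(x) \Lambda(1-tx) \right|^{2k} = \sum_{x_1,\dots,x_k,y_1,\dots,y_k \in \mathbb F_q} \prod_{i=1}^k F_q(x_i) \overline{F_q(y_i)} \sum_{\Lambda : W_{d,q} \to \mathbb C^\times} \Lambda\!\left( \frac{\prod_i (1-tx_i)}{\prod_i (1-ty_i)} \right).
\]
By orthogonality of characters of the finite abelian group $W_{d,q}$, the inner sum is $|W_{d,q}| = q^d$ if the element $\prod_i(1-tx_i)/\prod_i(1-ty_i)$ is trivial in $W_{d,q} = (\mathbb F_q[t]/t^{d+1})^\times / \mathbb F_q^\times$, and $0$ otherwise. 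So the whole expression equals $q^d$ times the number of tuples $(x_1,\dots,x_k,y_1,\dots,y_k)$, weighted by $\prod_i F_q(x_i)\overline{F_q(y_i)}$, for which $\prod_i(1-tx_i) \equiv \lambda \prod_i(1-ty_i) \pmod{t^{d+1}}$ for some $\lambda \in \mathbb F_q^\times$.

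Next I would analyze this congruence condition. Since $1 - tx_i$ has constant term $1$, comparing constant terms forces $\lambda = 1$, so the condition is simply $\prod_i (1-tx_i) \equiv \prod_i(1-ty_i) \pmod{t^{d+1}}$. Because $k \leq d$, both products are polynomials of degree $k \leq d$, hence they are determined by their reductions mod $t^{d+1}$; thus the condition becomes the exact polynomial identity $\prod_i(1-tx_i) = \prod_i(1-ty_i)$ in $\mathbb F_q[t]$. By unique factorization, this holds precisely when the multiset $\{x_1,\dots,x_k\}$ equals the multiset $\{y_1,\dots,y_k\}$ (matching up the linear factors $1-tx_i$, noting $1-tx = 1-ty$ iff $x=y$; the factor with $x_i=0$ is $1$, but it still must be matched in equal numbers on both sides, which it automatically is once the nonzero parts match and the total counts are equal). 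Therefore, after dividing by $q^d$, the left side of the claimed limit equals
\[
\frac{1}{q^{k(w+1)}} \sum_{\substack{(x_1,\dots,x_k,y_1,\dots,y_k) \\ \{x_i\} = \{y_i\} \text{ as multisets}}} \prod_{i=1}^k F_q(x_i) \overline{F_q(y_i)}.
\]

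Finally I would extract the main term by separating the diagonal contribution from tuples with all $x_i$ distinct. If the $x_i$ are pairwise distinct, the matching multiset condition means $(y_1,\dots,y_k)$ is a permutation of $(x_1,\dots,x_k)$, giving $k!$ choices of permutation, each contributing $\prod_i |F_q(x_i)|^2$; summing over all ordered tuples of distinct $x_i$ and normalizing gives $k! \left( \frac{\sum_x |F_q(x)|^2}{q^{w+1}} \right)^k + (\text{lower order})$, where the lower-order correction comes from subtracting the tuples with a coincidence among the $x_i$. That correction, together with the contribution of all tuples having at least one coincidence among the $x_i$ (of which there are $O(q^{k-1})$ choices of the $x$-part, each with $O(1)$ matching $y$-parts and each term $O(q^{kw/2})$ after using $F_q = O(q^{w/2})$... more carefully, a coincidence among the $x_i$ forces a coincidence among the $y_i$, and one checks the count of such tuples is $O(q^{k-1})$ with each term $O(q^{k w})$, so the normalized total is $O(1/q)$). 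Using the hypothesis $\lim_{q\to\infty} \frac{\sum_x |F_q(x)|^2}{q^{w+1}} = 1$, the main term tends to $k! \cdot 1^k = k!$, and the error tends to $0$. The main obstacle is the bookkeeping in this last step: carefully showing that the contribution of tuples with repeated coordinates is genuinely lower-order, which requires combining the pointwise bound $F_q = O(q^{w/2})$ with the count of degenerate tuples, and handling the correction from replacing "multiset equality with distinct $x_i$" by "$y$ a permutation of $x$."
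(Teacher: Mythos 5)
Your proposal is correct and follows essentially the same approach as the paper: expand via orthogonality of characters on $W_{d,q}$, use $k\leq d$ to upgrade the congruence mod $t^{d+1}$ to a polynomial identity (hence multiset equality of the $x_i$ and $y_i$), and then isolate the $k!$ main term with the degenerate tuples absorbed into an $O(1/q)$ error using the pointwise bound $F_q=O(q^{w/2})$. You are actually slightly more careful than the paper in explicitly noting that the $\mathbb F_q^\times$ scalar in the definition of $W_{d,q}$ is forced to be $1$ by comparing constant terms; the only difference in bookkeeping is that you split off the distinct-$x_i$ tuples while the paper sums over all $(x,\sigma)$ and subtracts overcounts from tuples admitting multiple permutations, which amounts to the same estimate.
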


\begin{proof} 

 Expanding and exchanging the order of summation, this is 
\[\frac{1}{ q^{k(w+1)}} \sum_{\substack{ x_1,\dots,x_k \in \mathbb F_q \\  y_1,\dots,y_k \in \mathbb F_q} }  \frac{ 1}{ q^d}  \left(\prod_{i=1}^kF_q(x_i) \overline{F_q(y_i)}\right) \sum_{ \Lambda}   \Lambda \left(\prod_{i=1}^k (1 - t x_i) (1-t y_i)^{-1}\right)\]

Now
\[ \frac{ 1}{ q^d}  \sum_{ \Lambda}  \Lambda \left(\prod_{i=1}^k (1 - t x_i) (1-t y_i)^{-1} \right)= \begin{cases} 1 & \mbox{if } \prod_{i=1}^k (1 - t x_i) \equiv \prod_{i=1}^k (1-ty_i) \pmod{ t^{d+1}} \\ 0  & \mbox{if } \prod_{i=1}^k (1 - t x_i) \not \equiv \prod_{i=1}^k (1-ty_i) \pmod{ t^{d+1}}\end{cases} \]

But because $ \prod_{i=1}^k (1 - t x_i) $ and $ \prod_{i=1}^k (1-ty_i)$ have degree $k\leq d$, this congruence holds if and only if $ \prod_{i=1}^k (1 - t x_i) = \prod_{i=1}^k (1-ty_i)$, which is true if and only if there is some permutation $\sigma$ of $\{1,\dots,k\}$ such that $y_i= x_{\sigma(i)}$. So we obtain:
\[\frac{1}{ q^{k(w+1)}} \sum_{\substack{ x_1,\dots,x_k \in \mathbb F_q \\ y_1,\dots,y_k \in \mathbb F_q} }   \prod_{i=1}^kF_q(x_i) \overline{F_q(y_i)} 1_{\exists \sigma \in S_k : y_i=x_{\sigma(i)}}\]
which equals 
\[\frac{1}{ q^{k(w+1)}} \sum_{\substack{ x_1,\dots,x_k \in \mathbb F_q \\ y_1,\dots,y_k \in \mathbb F_q\\ \sigma \in S_k \\ \forall i, y_i=x_{\sigma(i)}}}   \prod_{i=1}^kF_q(x_i) \overline{F_q(y_i)} \]
minus the contribution coming from the $x_1,\dots,x_k,y_1,\dots,y_k$ where there is more than one such $\sigma$. The contribution from each such tuple is at most $k!-1$, the number of additional $\sigma$s, times $n^{2k} q^{kw}$, the maximum value of  $\prod_{i=1}^kF_q(x_i) \overline{F_q(y_i)}$, and each tuple with more than one $\sigma$ must have $x_i=x_j$ for some distinct $i,j$, and has $y_i$ determined by $x_{\sigma(i)}$, so the space of such tuples has dimension $\leq k-1$ and thus there are $O(q^{k-1})$ such tuples, for a total contribution of $O_{n,k} ( q^{kw+k-1})$, which when divided by $q^{k(w+1)}$ is $o(1)$. So it is sufficient to estimate the limit of:
\[\frac{1}{ q^{k(w+1)}} \sum_{\substack{ x_1,\dots,x_k \in \mathbb F_q \\ y_1,\dots,y_k \in \mathbb F_q\\ \sigma \in S_k \\ \forall i, y_i=x_{\sigma(i)}}}  \prod_{i=1}^kF_q(x_i) \overline{F_q(y_i)} \]
\[ = \frac{1}{ q^{k(w+1)}}  \sum_{\sigma \in S_k}  \sum_{x_1\dots, x_k} \prod_{i=1}^k F_q(x_i) \overline{F_q(x_{\sigma(i)})}  =  \frac{1}{ q^{k(w+1)}}  \sum_{\sigma \in S_k}  \sum_{x_1\dots, x_k} \prod_{i=1}^k |F_q(x_i) |^2 \]
\[= \frac{1}{ q^{k(w+1)}}  k!  \left( \sum_x |F_q(x)|^2 \right)^k = k! \left( \frac{\sum_x |F_q(x)|^2}{ q^{w+1}}\right)^k\]
whose limit is $k!$ by the assumption that the limit of $\frac{\sum_x |F_q(x)|^2}{ q^{w+1}}$ is $1$.

\end{proof}

\begin{lemma}\label{indio} Let $F_{1,q}(x)$ and $F_{2,q}(x)$ be two sequences of functions $F_q: \mathbb F_q \to \mathbb C$ indexed by powers $q$ of $q_0$ such that for $i=1,2$ we have $F_{i,q}(x) = O( q^{w_i/2})$ and $\lim_{q \to \infty} \frac{\sum_{x\in \mathbb F_q}|F_{i,q}(x)|^2}{ q^{w_i+1}}=1$. Assume that $\lim_{q \to \infty} \frac{\sum_{x\in \mathbb F_q} F_{1,q}(x)\overline{F_{2,q}(x)}}{ q^{(w_1+w_2)/2+1}}=0$.

If $d \geq 2$, then

\[\lim_{q \to \infty} \frac{ 1}{ q^d}  \sum_{ \Lambda: W_{d, q} \to \mathbb C^\times} \frac{ \left| \sum_{x \in \mathbb F_q} F_{1,q}(x) \Lambda(1- tx) \right|^{2} \left| \sum_{x \in \mathbb F_q} F_{2,q}(x) \Lambda(1- tx) \right|^{2}}{q^{w_1+w_2+2}} =   1.\]

\end{lemma}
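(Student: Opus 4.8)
The plan is to mimic the computation in Lemma \ref{diophantinepart} with $k=2$ but now keeping track of which of the two functions $F_{1,q}$ or $F_{2,q}$ each variable comes from. Expanding the product of the two squared sums and exchanging the order of summation, the left-hand side becomes
\[
\frac{1}{q^{w_1+w_2+2}} \sum_{\substack{x_1,y_1,x_2,y_2 \in \mathbb F_q}} F_{1,q}(x_1)\overline{F_{1,q}(y_1)} F_{2,q}(x_2) \overline{F_{2,q}(y_2)} \cdot \frac{1}{q^d}\sum_{\Lambda} \Lambda\left( (1-tx_1)(1-ty_1)^{-1}(1-tx_2)(1-ty_2)^{-1} \right).
\]
As in Lemma \ref{diophantinepart}, the inner sum over $\Lambda$ is $1$ when $(1-tx_1)(1-tx_2) \equiv (1-ty_1)(1-ty_2) \pmod{t^{d+1}}$ and $0$ otherwise, and since $d \geq 2$ and both products have degree $2$, the congruence is equivalent to the polynomial identity $(1-tx_1)(1-tx_2) = (1-ty_1)(1-ty_2)$, i.e.\ $\{x_1,x_2\} = \{y_1,y_2\}$ as multisets.

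So the sum reduces to the diagonal contributions. There are two cases: either $y_1 = x_1, y_2 = x_2$, or $y_1 = x_2, y_2 = x_1$. The first case contributes
\[
\frac{1}{q^{w_1+w_2+2}} \left(\sum_{x_1} |F_{1,q}(x_1)|^2\right)\left(\sum_{x_2} |F_{2,q}(x_2)|^2\right) = \frac{\sum_x |F_{1,q}(x)|^2}{q^{w_1+1}} \cdot \frac{\sum_x |F_{2,q}(x)|^2}{q^{w_2+1}},
\]
whose limit is $1 \cdot 1 = 1$ by hypothesis. The second case (with $y_1 = x_2$, $y_2 = x_1$) contributes
\[
\frac{1}{q^{w_1+w_2+2}} \sum_{x_1,x_2} F_{1,q}(x_1) \overline{F_{1,q}(x_2)} F_{2,q}(x_2) \overline{F_{2,q}(x_1)} = \frac{1}{q^{w_1+w_2+2}} \left| \sum_{x} F_{1,q}(x) \overline{F_{2,q}(x)} \right|^2 = \left( \frac{ \left| \sum_x F_{1,q}(x)\overline{F_{2,q}(x)} \right| }{q^{(w_1+w_2)/2 + 1}} \right)^2,
\]
whose limit is $0^2 = 0$ by the orthogonality hypothesis $\lim_{q\to\infty} \frac{\sum_x F_{1,q}(x)\overline{F_{2,q}(x)}}{q^{(w_1+w_2)/2+1}}=0$. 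As before, one must subtract off the overcounting from tuples lying in both cases --- those with $x_1 = x_2$ --- but this is a codimension-one locus contributing $O(q^{(w_1+w_2)/2 \cdot 2 + 1})/q^{w_1+w_2+2} = O(1/q) = o(1)$ using the pointwise bounds $F_{i,q}(x) = O(q^{w_i/2})$. Summing the two contributions gives the limit $1 + 0 = 1$.

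The only mild subtlety, and the one place where the argument is not quite identical to Lemma \ref{diophantinepart}, is the cross-case estimate: it is crucial that $|F_{1,q}(x)\overline{F_{2,q}(x)}|$ is controlled by Cauchy--Schwarz in terms of the $L^2$ norms, so that the second-case contribution really is the square of a quantity going to zero rather than something one only knows to be bounded. This is immediate from the hypotheses, so I expect no genuine obstacle; the proof is a routine bookkeeping variant of the preceding lemma, with $d \geq 2$ (rather than $k \leq d$) being exactly what is needed to promote the congruence mod $t^{d+1}$ to a polynomial identity for degree-two products.
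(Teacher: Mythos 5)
Your proposal is correct and follows essentially the same route as the paper: expand, use orthogonality of characters modulo $t^{d+1}$ (promoted to polynomial equality by $d\ge 2$), split by the two matchings of $\{x_1,x_2\}$ with $\{y_1,y_2\}$, and note that the transposition term equals $\bigl|\sum_x F_{1,q}(x)\overline{F_{2,q}(x)}\bigr|^2 / q^{w_1+w_2+2}\to 0$ by the orthogonality hypothesis while the overcounted diagonal $x_1=x_2$ contributes $O(1/q)$ by the pointwise bounds. Your closing remark about Cauchy--Schwarz is unnecessary --- the cross-term limit is given directly by hypothesis and the diagonal correction needs only the pointwise bound --- but this does not affect the correctness of the argument.
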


\begin{proof}

 Expanding and exchanging the order of summation, we must compute the limit of
\[\frac{1}{ q^{w_1+w_2+2}} \sum_{x_1,x_2,y_1,y_2 \in \mathbb F_q}  \frac{ 1}{ q^d}  \sum_{ \Lambda}  F_{1,q}(x_1) \overline{F_{1,q}(y_1)} F_{2,q}(x_2) \overline{F_{2,q}(y_2)} \Lambda \left( \frac{(1 - t x_1) (1-t x_2) }{(1-t y_1)(1-t y_2)}\right)\]

Now
\[ \frac{ 1}{ q^d}  \sum_{ \Lambda}  \Lambda  \left( \frac{(1 - t x_1) (1-t x_2)}{ (1-t y_1)(1-t y_2)}  \right)= \begin{cases} 1 & \mbox{if } \prod_{i=1}^k (1 - t x_1)(1-t x_2) \equiv (1-t y_1) (1-t y_2) \pmod{ t^{d+1}} \\ 0  & \mbox{if } \prod_{i=1}^k (1 - t x_1)(1-t x_2) \not \equiv (1-t y_1) (1-t y_2) \pmod{ t^{d+1}}\end{cases} \]

But because $(1 - t x_1)(1-t x_2)$ and $(1-t y_1) (1-t y_2)$ have degree $2\leq d$, this congruence holds if and only if $(1-t x_1) (1-tx_2) = (1-t y_1) (1-t y_2)$, which is true if and only if $x_1 = y_1$ and $x_2=y_2$ or $x_1=y_2$ and $x_2=y_1$. So the sum is equal to:
\[\frac{1}{ q^{w_1+w_2+2}} \sum_{x_1,x_2,y_1,y_2 \in \mathbb F_q}   F_{1,q}(x_1) \overline{F_{1,q}(y_1)} F_{2,q}(x_2) \overline{F_{2,q}(y_2)} 1_{(x_1 =x_2 \textrm{ and } y_1 =y_2) \textrm{ or } (x_1=y_2 \textrm{ and } x_2=y_1) }\]
We can write this as the sum over terms where $x_1=x_2$ and $y_1=y_2$, plus the sum over terms where $x_1=y_2$ and $x_2=y_1$, minus the sum over terms where both $x_1=x_2$ and $y_2=y_1$. The first sum is \[ \frac{1}{ q^{w_1+w_2+2}}\sum_{x_1,x_2\in \mathbb F_q}  F_{1,q} (x_1) \overline{F_{1,q}(x_1)} F_{2,q}(x_2) \overline{F_{2,q}(x_2)} = \frac{ \sum_{x \in \mathbb F_q} |F_{1,q}(x)|^2}{ q ^{w_1+1}}\frac{ \sum_{x \in \mathbb F_q} |F_{2,q}(x)|^2}{ q ^{w_2+1}}\] and converges to $1$ as $q$ goes to $\infty$, the second sum is \[ \frac{1}{ q^{w_1+w_2+2}}\sum_{x_1,x_2\in \mathbb F_q }   F_{1,q}(x_1) \overline{F_{1,q}(x_1)} F_{2,q}(x_2) \overline{F_{2,q}(x_2)}= \left| \frac{ \sum_{ x\in \mathbb F_q} F_{1,q}(x) \overline{F_{2,q}(x)}}{q^{(w_1+w_2)/2+1}} \right|^2\] and converges to $0$ as $q$ goes to $\infty$, and the last term is \[ \frac{1}{ q^{w_1+w_2+2}}\sum_{x \in \mathbb F_q} |F_{1,q}(x)|^2 |F_{2,q}(x)|^2= \frac{1}{ q^{w_1+w_2+2}}\sum_{x \in \mathbb F_q}  O( q^{w_1+ w_2} )= \frac{1}{ q^{w_1+w_2+2}} O (q^{w_1+w_2+1})\] and converges to $0$ as $q$ goes to $\infty$.

Hence the whole sum converges to $1$ as $q$ goes to $\infty$, as desired.
\end{proof}

\subsection{Geometric preliminaries}

Let $\ell$ be a prime and $\iota: \Ql \to \mathbb C$ an embedding.

Let $\F$ be a geometrically irreducible middle extension $\Ql$-sheaf on $\mathbb A^1$, $\iota$-pure of weight $w$.

\begin{lemma}\label{middle} Let $j: \mathbb A^1 \to \mathbb P^1$ be the open immersion. Then $j_!(\F \otimes \L_\Lambda)$ is a middle extension sheaf on $\mathbb P^1_{\mathbb F_q}$ for all but at most $m$ values of $\Lambda$. \end{lemma}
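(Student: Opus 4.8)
The plan is to reduce the question to a statement about the local structure of $\F \otimes \L_\Lambda$ at the point $\infty$, and then count. First I would note that $\F \otimes \L_\Lambda$ is always a middle extension sheaf on $\mathbb A^1$: if $j' \colon U \hookrightarrow \mathbb A^1$ is the open dense locus on which $\F$ is lisse, then $\F = j'_*(\F|_U)$, and since $\L_\Lambda$ is lisse on all of $\mathbb A^1$ the projection formula gives $\F \otimes \L_\Lambda \cong j'_*\!\left( \F|_U \otimes \L_\Lambda|_U \right)$. Consequently $j_*(\F \otimes \L_\Lambda)$ is a middle extension sheaf on $\mathbb P^1$ for every $\Lambda$; since $j_!(\F\otimes\L_\Lambda)$ and $j_*(\F\otimes\L_\Lambda)$ restrict to the same sheaf on $\mathbb A^1$ and can differ only in their stalk at $\infty$, the sheaf $j_!(\F\otimes\L_\Lambda)$ is a middle extension if and only if it coincides with $j_*(\F\otimes\L_\Lambda)$, which happens if and only if the stalk of $j_*(\F\otimes\L_\Lambda)$ at a geometric point over $\infty$ vanishes, i.e.\ if and only if the inertia invariants $\left( (\F\otimes\L_\Lambda)_{\overline\eta} \right)^{I_\infty}$ are zero, where $I_\infty$ is the inertia group at $\infty$ acting on the generic stalk.

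Next I would analyze these invariants. Because $\L_\Lambda$ has rank one, its restriction to $I_\infty$ is a single character $\chi_\Lambda$, and $\left( (\F\otimes\L_\Lambda)_{\overline\eta} \right)^{I_\infty} = \operatorname{Hom}_{I_\infty}\!\left( \chi_\Lambda^{-1}, \F_{\overline\eta} \right)$. Thus $j_!(\F\otimes\L_\Lambda)$ fails to be a middle extension precisely when $\chi_\Lambda^{-1}$ occurs as an $I_\infty$-subrepresentation of $\F_{\overline\eta}$. Since $\F_{\overline\eta}$ is $m$-dimensional, with $m$ the generic rank of $\F$, its socle as an $I_\infty$-representation is a direct sum of at most $m$ irreducibles, so at most $m$ distinct characters of $I_\infty$ can arise in this way.

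Finally I would transfer this back to a statement about $\Lambda$, using that $\Lambda \mapsto \chi_\Lambda = \L_\Lambda|_{I_\infty}$ is injective. This is where Lemma \ref{cft} enters: it exhibits $W_{d,q}$ as a quotient of the local unit group $\mathbb F_q[[t]]^\times$ at $\infty$ (namely by $(1 + t^{d+1}\mathbb F_q[[t]])\cdot \mathbb F_q^\times$), and under local class field theory at $\infty$ the group $\mathbb F_q[[t]]^\times$ is identified with the abelianization of $I_\infty$ compatibly with the global Artin map used to define $\L_\Lambda$; hence $I_\infty$ surjects onto $W_{d,q}$ and distinct characters $\Lambda$ of $W_{d,q}$ pull back to distinct characters of $I_\infty$. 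Therefore the set of $\Lambda$ for which $j_!(\F\otimes\L_\Lambda)$ is not a middle extension injects into the set of characters of $I_\infty$ occurring in the socle of $\F_{\overline\eta}$, of which there are at most $m$, proving the claim. The only point requiring genuine care is this last injectivity and the resulting count — verifying that twisting by distinct $\Lambda$ really changes the local representation at $\infty$, for which the explicit description of $W_{d,q}$ from Lemma \ref{cft} is essential; the rest of the argument is formal.
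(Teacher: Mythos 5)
Your proposal is correct and takes essentially the same route as the paper: reduce to the behavior at $\infty$, observe that $j_!$ fails to be a middle extension there exactly when $\L_\Lambda^\vee$ occurs as an $I_\infty$-subrepresentation of $\F$, and count at most $m$ such characters, using injectivity of $\Lambda \mapsto \L_\Lambda|_{I_\infty}$. You simply make more explicit two points the paper treats tersely (that $\F\otimes\L_\Lambda$ is automatically a middle extension on $\mathbb A^1$, and the injectivity at the level of inertia via Lemma \ref{cft}).
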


\begin{proof} 
As $ j_! (\F\otimes \mathcal L_\Lambda)$ is a middle extension sheaf away from $\infty$, it is sufficient to check that it is a middle extension sheaf at $\infty$, which occurs if and only if the invariant subspace of the local monodromy representation of $\F \otimes \mathcal L_\Lambda$ at $\infty$ vanishes. If the invariant subspace of the local monodromy representation of $\F \otimes \L_\lambda$ is nontrivial, then the local monodromy representation of $\mathcal L_\Lambda^\vee$ is a subrepresentation of the local monodromy representation of $\F$. Because the local monodromy representation of $\F$ has at most $m$ isomorphism classes of irreducible subrepresentations, and each of these subrepresentations can only be $\mathcal L_\Lambda^\vee$ for a single $\Lambda$, there are at most $m$ such $\Lambda$. \end{proof}

\begin{lemma}\label{cohweight} The space $H^1_c( \mathbb A^1_{\Fq}, \F \otimes \mathcal L_\Lambda)$ is mixed of weight $\leq w+1$ and is pure of weight $w+1$ for all but at most $m$ values of $\mathcal L_\Lambda$. \end{lemma}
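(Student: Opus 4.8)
Both assertions will follow from Deligne's fundamental estimate in Weil~II, combined with Lemma~\ref{middle} and Poincar\'e duality on $\mathbb P^1$.

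First I would record the structural input. Since $\Lambda$ is a character of the finite group $W_{d,q}$, the sheaf $\L_\Lambda$ has finite monodromy, is lisse on all of $\mathbb A^1$ of rank one, and is $\iota$-pure of weight $0$. Hence, writing $j_U\colon U \to \mathbb A^1$ for the lisse locus of $\F$, the projection formula gives $\F \otimes \L_\Lambda = j_{U*}(\F|_U) \otimes \L_\Lambda = j_{U*}(\F|_U \otimes \L_\Lambda|_U)$, so $\F \otimes \L_\Lambda$ is again a middle extension sheaf on $\mathbb A^1$, and it is $\iota$-pure of weight $w$. Now the upper bound is immediate and holds for \emph{every} $\Lambda$: by Deligne's main theorem in Weil~II, a sheaf that is mixed of weight $\le w$ on a variety has $H^i_c$ mixed of weight $\le w+i$, and taking $i=1$ shows $H^1_c(\mathbb A^1_{\Fq}, \F \otimes \L_\Lambda)$ is mixed of weight $\le w+1$.

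For the purity statement, invoke Lemma~\ref{middle}: for all but at most $m$ values of $\Lambda$ the sheaf $\G := j_!(\F \otimes \L_\Lambda) = j_*(\F \otimes \L_\Lambda)$ is a middle extension sheaf on $\mathbb P^1$, and the middle extension of an $\iota$-pure sheaf is $\iota$-pure, so $\G$ is $\iota$-pure of weight $w$ on $\mathbb P^1$. For such $\Lambda$ we have $H^1_c(\mathbb A^1_{\Fq}, \F \otimes \L_\Lambda) = H^1_c(\mathbb P^1_{\Fq}, \G) = H^1(\mathbb P^1_{\Fq}, \G)$ since $\mathbb P^1$ is proper. Poincar\'e--Verdier duality on the smooth proper curve $\mathbb P^1$ gives $H^1(\mathbb P^1, \G)^\vee \cong H^1(\mathbb P^1, \G^\vee)(1)$, where $\G^\vee$ is the middle extension of the dual of the generic stalk of $\G$, itself $\iota$-pure of weight $-w$. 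Applying Weil~II to $\G^\vee$ shows $H^1(\mathbb P^1, \G^\vee)$ is mixed of weight $\le -w+1$, hence after the Tate twist $H^1(\mathbb P^1, \G)^\vee$ is mixed of weight $\le -(w+1)$, i.e. $H^1(\mathbb P^1, \G)$ is mixed of weight $\ge w+1$. Together with the upper bound this forces $H^1_c(\mathbb A^1_{\Fq}, \F \otimes \L_\Lambda)$ to be pure of weight $w+1$. (Equivalently, one can phrase this perverse-sheaf-theoretically: $\G[1]$ is the intermediate extension to $\mathbb P^1$ of the pure perverse sheaf $(\F \otimes \L_\Lambda)[1]$ of weight $w+1$, hence pure of weight $w+1$, so its hypercohomology on the proper variety $\mathbb P^1$ is pure and $H^1_c(\mathbb A^1, \F\otimes\L_\Lambda) = {}^pH^0\bigl(R\Gamma(\mathbb P^1, \G[1])\bigr)$ is pure of weight $w+1$.)

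There is no serious obstacle here; the argument is a routine packaging of Weil~II and duality. The only points requiring a little care are checking that tensoring with $\L_\Lambda$ preserves the middle-extension and $\iota$-purity properties, and tracking the Tate twist through the duality step so that the upper and lower weight bounds meet exactly at $w+1$.
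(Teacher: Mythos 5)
Your proof is correct. It follows the same structural path as the paper: Weil~II gives the upper bound for any $\Lambda$, and Lemma~\ref{middle} reduces the purity claim to the cohomology of a middle extension sheaf on the proper curve $\mathbb P^1$. The only difference is that where the paper simply cites Deligne's purity theorem for $H^1$ of a middle extension on a complete curve (Weil~II, Theorem~2), you unpack its proof via Poincar\'e--Verdier duality to manufacture the lower weight bound; this is exactly how Deligne's result is established, so you've recovered the cited ingredient rather than replaced it. All the small points you flag --- that tensoring by the lisse rank-one, weight-zero sheaf $\L_\Lambda$ preserves the middle-extension property and $\iota$-purity, and the bookkeeping of the Tate twist so the two bounds meet at $w+1$ --- check out.
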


\begin{proof} The sheaf $\F \otimes \mathcal L_\Lambda$ is also a middle extension sheaf and pure of weight $w$, so by \cite[Corollary 3.3.4]{weilii} its cohomology is mixed of weight $\leq w+1.$

Furthermore $H^1_c( \mathbb A^1_{\Fq}, \F \otimes \mathcal L_\Lambda)=H^1(\mathbb P^1_{\Fq}, j_! (\F\otimes \mathcal L_\Lambda))$ so for all $\Lambda$ such that $ j_! (\F\otimes \mathcal L_\Lambda)$ is a middle extension sheaf, its cohomology is pure of weight $w+1$ \cite[Theorem 2]{weilii}. By Lemma \ref{middle}, this holds for all but at most $m$ values of $\Lambda$.
\end{proof}

\begin{lemma}\label{cohomologyvanishing} Assume that $\F$ is not geometrically isomorphic to $\L_{\Lambda^{-1}}$. Then $H^i_c( \mathbb A^1_{\Fq}, \F \otimes \L_{\Lambda} )$ vanishes for $i \neq 1$.

\end{lemma}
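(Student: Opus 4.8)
The plan is to use the standard cohomology vanishing for middle extension sheaves on open curves together with the hypothesis that $\F$ is geometrically irreducible. First I would recall that for a middle extension sheaf $\G$ on $\mathbb A^1_{\Fq}$ (equivalently $j_{!*}$ of a lisse sheaf on an open subset), one has $H^0_c(\mathbb A^1_{\Fq},\G)=0$ automatically, since $\mathbb A^1$ is affine and $\G$ has no punctual sections; more to the point, $H^0_c$ of any sheaf on a connected affine curve with no global sections supported at points vanishes. The sheaf $\F\otimes\L_\Lambda$ is a middle extension sheaf — it is the tensor of a middle extension sheaf with a lisse rank one sheaf on all of $\mathbb A^1$ — so $H^0_c(\mathbb A^1_{\Fq},\F\otimes\L_\Lambda)=0$.

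Next I would handle $H^2_c$. By Poincaré duality, $H^2_c(\mathbb A^1_{\Fq},\F\otimes\L_\Lambda)$ is dual to $H^0(\mathbb A^1_{\Fq},(\F\otimes\L_\Lambda)^\vee(1))$, which is the space of global sections over $\Fq$ of the dual sheaf; this equals the space of coinvariants, equivalently for a geometrically irreducible sheaf the space of invariants, of the geometric monodromy acting on the generic stalk of $(\F\otimes\L_\Lambda)^\vee$. Since $\F$ is geometrically irreducible and $\L_\Lambda$ is a geometrically nontrivial-or-trivial rank one lisse sheaf, $\F\otimes\L_\Lambda$ is again geometrically irreducible; its geometric monodromy invariants vanish unless $\F\otimes\L_\Lambda$ is geometrically trivial, i.e. geometrically isomorphic to the constant sheaf, i.e. $\F\cong\L_{\Lambda^{-1}}$ geometrically. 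That case is excluded by hypothesis, so $H^2_c$ vanishes.

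Finally, $H^i_c(\mathbb A^1_{\Fq},-)$ vanishes for $i<0$ and $i>2$ on a curve for dimension reasons. Combining, only $H^1_c$ can be nonzero. The main subtlety — really the only point requiring care — is the claim that $\F\otimes\L_\Lambda$ geometrically irreducible and its geometric monodromy having a nonzero invariant vector forces it to be geometrically trivial of rank one, and then to check that "geometrically trivial" is equivalent to "$\F$ geometrically isomorphic to $\L_{\Lambda^{-1}}$"; this is immediate from irreducibility (an irreducible representation with a nonzero fixed vector is the trivial one-dimensional representation) together with the fact that $\L_\Lambda^\vee\cong\L_{\Lambda^{-1}}$. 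I would also note in passing that $H^2_c$ could alternatively be computed directly as the geometric coinvariants, avoiding duality, but the duality phrasing makes the irreducibility input most transparent.
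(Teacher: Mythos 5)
Your proof is correct and follows essentially the same route as the paper: identify the nonzero degrees as $0,1,2$ by cohomological dimension, kill $H^0_c$ by the middle-extension property on an affine curve, and kill $H^2_c$ as the geometric coinvariants (you phrase this via Poincar\'e duality and invariants of the dual, but as you yourself note this is interchangeable with the direct coinvariants computation the paper uses), which vanish by irreducibility of $\F \otimes \L_\Lambda$ unless $\F$ is geometrically $\L_{\Lambda^{-1}} = \L_\Lambda^\vee$, excluded by hypothesis.
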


\begin{proof} The cohomology group $H^i_c( \mathbb A^1_{\Fq}, \F \otimes \L_{\Lambda} )$ vanishes for all $i$ but $0,1,$ or $2$ because the cohomological dimension of a curve is at most $2$. The cohomology vanishes for $i=0$ because $\F \otimes \L_\Lambda$ is a middle extension sheaf on an affine curve and thus has no compactly supported sections. For $i=2$, $H^i_c( \mathbb A^1_{\Fq}, \F \otimes \L_{\Lambda} )$ is equal to the global monodromy coinvariants of $\F \otimes \L_\Lambda$, which because $\F$ and $\L_\Lambda$ are geometrically irreducible vanish unless $\F $ is geometrically isomorphic to $\L_\Lambda^\vee$, which it cannot be as $\L_{\Lambda}^{\vee} = \L_{\Lambda^{-1}}$. \end{proof}

\begin{lemma}\label{secondmoment} We have \[\lim_{q \to \infty} \frac{ \sum_x |\tr (\Frob_q, \mathcal F,x)|^2}{ q^{w+1}} =1.\] \end{lemma}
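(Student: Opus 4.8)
The plan is to expand $|\tr(\Frob_q,\F,x)|^2$ using $\iota$-purity and then evaluate the resulting exponential sum by the Grothendieck--Lefschetz trace formula. Throughout, $\F$ is a fixed sheaf on $\mathbb A^1_{\mathbb F_{q_0}}$ and one works with its base change to $\mathbb F_q$, summing over $x\in\mathbb A^1(\mathbb F_q)$.

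First I would reduce to the lisse locus. Let $U\subseteq\mathbb A^1$ be the dense open subscheme on which $\F$ is lisse; its complement is a fixed finite set of closed points, so $(\mathbb A^1\setminus U)(\mathbb F_q)$ has size $O(1)$. At such a point $x$ the stalk $\F_x=\F_{\bar\eta}^{I_x}$ has dimension at most $m$ and, being the inertia invariants of a middle extension sheaf $\iota$-pure of weight $w$, has Frobenius eigenvalues of $\iota$-weight $\le w$; hence $\sum_{x\in(\mathbb A^1\setminus U)(\mathbb F_q)}|\tr(\Frob_q,\F,x)|^2=O(q^w)=o(q^{w+1})$, and it suffices to handle $\sum_{x\in U(\mathbb F_q)}|\tr(\Frob_q,\F,x)|^2$.

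Next I would use purity and Grothendieck--Lefschetz. For $x\in U(\mathbb F_q)$ the eigenvalues $\alpha_1,\dots,\alpha_m$ of $\Frob_q$ on $\F_x$ satisfy $|\iota(\alpha_j)|=q^{w/2}$, so $\overline{\iota(\alpha_j)}=q^w/\iota(\alpha_j)$ and therefore
\[ |\tr(\Frob_q,\F,x)|^2 \;=\; q^{w}\sum_{j,k}\iota(\alpha_j\alpha_k^{-1}) \;=\; q^{w}\,\tr\bigl(\Frob_q,\ \G,\ x\bigr),\qquad \G:=\F|_U\otimes(\F|_U)^\vee. \]
Summing over $U(\mathbb F_q)$ and applying Grothendieck--Lefschetz to the lisse sheaf $\G$ on $U$,
\[ \sum_{x\in U(\mathbb F_q)}|\tr(\Frob_q,\F,x)|^2 \;=\; q^{w}\sum_{i=0}^{2}(-1)^i\,\tr\bigl(\Frob_q,\ H^i_c(U_{\Fq},\G)\bigr). \]
Now $U$ is affine, connected and not proper, so $H^0_c(U_{\Fq},\G)=0$. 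Since $\F|_U$ is a geometrically irreducible lisse sheaf, Schur's lemma over $\Ql$ gives $\operatorname{Hom}_{\pi_1^{\mathrm{geom}}(U)}(\F_{\bar\eta},\F_{\bar\eta})=\Ql$, so by Poincar\'e duality $H^2_c(U_{\Fq},\G)\cong\bigl((\G_{\bar\eta})_{\pi_1^{\mathrm{geom}}(U)}\bigr)(-1)$ is one-dimensional, with $\Frob_q$ acting by $q$: the coinvariants quotient is the Frobenius-equivariant trace map $\F|_U\otimes(\F|_U)^\vee\to\Ql$ on which arithmetic Frobenius acts trivially, before the Tate twist. Finally $\G$ is $\iota$-pure of weight $0$, so by Deligne's Weil~II $H^1_c(U_{\Fq},\G)$ is mixed of $\iota$-weight $\le 1$; its dimension is a fixed constant, since $\G$ is the base change of a fixed sheaf and the cohomology over $\Fq$ does not vary with $q$ (equivalently, bound it by Grothendieck--Ogg--Shafarevich in terms of the rank and Swan conductors of $\F$). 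Hence $\tr(\Frob_q,H^1_c(U_{\Fq},\G))=O(q^{1/2})$, and combining,
\[ \sum_{x}|\tr(\Frob_q,\F,x)|^2 \;=\; q^{w}\bigl(q+O(q^{1/2})\bigr)+O(q^w) \;=\; q^{w+1}+O(q^{w+1/2}), \]
which upon dividing by $q^{w+1}$ gives the limit $1$.

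The crux of the argument — more a matter of care than of genuine difficulty — is the identification of $H^2_c(U_{\Fq},\G)$: geometric irreducibility of $\F$ is precisely what forces this top cohomology group to be one-dimensional with Frobenius eigenvalue $q$, pinning the main term to $q^{w+1}$. The remaining inputs (the bad points and $H^1_c(U_{\Fq},\G)$ being of strictly lower order) then rely only on standard weight bounds together with the fact that, since $\F$ is the base change of a fixed object, all the relevant dimensions stay bounded as $q\to\infty$.
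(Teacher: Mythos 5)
Your proof is correct. The paper's own proof is a one-line citation to a result in \cite{rls} (``Variant Theorem~7.1.2''), so you have supplied in full the argument the paper outsources. Your route is the standard one: purity rewrites $|\tr(\Frob_q,\F,x)|^2$ on the lisse locus $U$ as $q^w\tr(\Frob_q,\F|_U\otimes(\F|_U)^\vee,x)$; the trace formula on $U$ then reduces the sum to cohomology; $H^0_c$ vanishes on the affine non-proper curve; Schur's lemma plus geometric irreducibility forces $H^2_c$ to be one-dimensional with Frobenius eigenvalue $q$; Weil~II bounds $H^1_c$ by $O(q^{1/2})$; and the finitely many non-lisse points contribute $O(q^w)$. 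Every step is sound, including the slightly delicate identification of the Frobenius action on the geometric coinvariants of $\F\otimes\F^\vee$ via the trace pairing. What the paper's approach buys is brevity and the black-boxing of these standard manipulations; what your approach buys is a self-contained, transparent derivation and an explicit error term $O(q^{-1/2})$, strictly stronger than the bare limit the lemma asserts. The only cosmetic point worth noting is that the claimed weight bound $\le w$ for the stalks at non-lisse points can actually be sharpened to $\le w-1$ for a pure middle extension, but $\le w$ already suffices for your $o(q^{w+1})$ estimate, so nothing is lost.
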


\begin{proof} This follows immediately from \cite[Variant Theorem 7.1.2, (1) and (3)]{rls}.\end{proof}

\section{Reduction to the case of middle extension sheaves}

We now prove Theorems \ref{main-arithmetic} and \ref{main-automorphic}, assuming Theorem \ref{main-geometric}.

\begin{proof}[Proof of Theorem \ref{main-arithmetic}]  Let $\rho_1,\dots, \rho_r$ be Galois representations. Because they are finitely ramified, they are each a representation of the \'{e}tale fundamental group of some open subset of $\mathbb A^1_{\mathbb F_{q_0}}$, which defines a lisse sheaf on that open subsets. Let $\F_1,\dots,\F_r$ be these sheaves. The conditions on irreducibility and non-isomorphism statements on $V_1,\dots,V_r$ imply the analogous statements for $\F_1,\dots,\F_r$.

We take $N_1,\dots, N_r$ and $S_q$ as in Theorem \ref{main-geometric}.

Let $\phi_{\Lambda,i}$ be the semisimplified conjugacy class of $\Frob_q /q^{\frac{w_i+1}{2}}$ on $H^1_c( \mathbb A^1_{\overline{\mathbb F}_q}, \F_i \otimes \mathcal L_\Lambda)$. Because semisimplification does not affect the characteristic polynomial, we have $\det ( 1- \phi_{\Lambda,i} q^{ \frac{w_i+1}{2} - s} ) = \det ( 1 - q^{-s} \Frob_q, H^1_c( \mathbb A^1_{\overline{\mathbb F}_q}, \F_i \otimes \mathcal L_\Lambda)) $.  So it suffices to show that $L(\mathbb F_q(x), \rho_i \otimes \Lambda) =  \det ( 1 - q^{-s} \Frob_q, H^i_c( \mathbb A^1_{\overline{\mathbb F}_q}, \F_i \otimes \mathcal L_\Lambda)) $.

This follows from the general formula for the Artin $L$-function of a Galois representation in terms of the Frobenius action on the cohomology of the associated middle extension sheaf, the fact that $\F_i \otimes \mathcal L_\Lambda$ is a middle extension sheaf, and the fact, by construction, that its underlying Galois representation is $\rho_i \otimes \Lambda$. 
\end{proof}

\begin{proof}[Proof of Theorem \ref{main-automorphic}] Let $\pi_1,\dots,\pi_r$ be cuspidal automorphic representations of $GL_{m_i}$ with central character of finite order. Lafforgue associates irreducible Galois representations $\F_1,\dots, \F_r $, with determinant of finite order, to these representations \cite[Theorem VI.9]{Lafforgue}.

Because the determinant has finite order, they are pure of weight zero by \cite[Theorem VII.6(2)]{Lafforgue}.

They are not geometrically isomorphic to each other, and are geometrically irreducible, because of a proof-by-contradiction argument. First note that, because they are irreducible, they are geometrically semisimple, hence to show they are geometrically irreducible, it suffices to show that $\operatorname{Hom}_{\mathbb A^1_{\overline{\mathbb F}_{q_0}}} ( \F_i, \F_i) = \Ql$ as a Galois representation, and to show they are geometrically non-isomorphic, it suffices to show that $\operatorname{Hom}_{\mathbb A^1_{\overline{\mathbb F}_{q_0}}} ( \F_i, \F_j) = 0$ as a Galois representation. Furthermore, because the Frobenius-invariant part of $\operatorname{Hom}_{\mathbb A^1_{\overline{\mathbb F}_{q_0}}} ( \F_i, \F_i) $ is one-dimensional, and the Frobenius action is semisimple, to show it is $\Ql$, it sufficies to show it contains no eigenvector of $\Frob_q$ with eigenvalue $\alpha \neq 1$.  Supposing otherwise, there exists some Frobenius eigenvalue $\alpha$ that appears in the $\operatorname{Hom}_{\mathbb A^1_{\overline{\mathbb F}_{q_0}}} ( \F_i, \F_j) $ , and $\alpha \neq 1$ or $i\neq j$. Then there exists a nontrivial map from $\F_i$ twisted by $\alpha$ to $\F_j$, and becaues they are irreducible this map must be an isomorphism. Because the Langlands correspondence is compatible with tensor products with one-dimensional representations (as follows from the compatibility with local $L$-factors) the corresponding isomorphism on the cusp form side must exist, contradicting the assumption.

We take $N_1,\dots, N_r$ and $S_q$ as in Theorem \ref{main-geometric}.

Let $\phi_{\Lambda,i}$ be the semisimplified conjugacy class of $\Frob_q /q^{\frac{1}{2}}$ on $H^1_c( \mathbb A^1_{\overline{\mathbb F}_q}, \F_i \otimes \mathcal L_\Lambda)$. Because semisimplification does not affect the characteristic polynomial, we have $\det ( 1- \phi_{\Lambda,i} q^{ \frac{1}{2} - s} ) = \det ( 1 - q^{-s} \Frob_q, H^1_c( \mathbb A^1_{\overline{\mathbb F}_q}, \F_i \otimes \mathcal L_\Lambda))  = \det ( 1 - q^{-s} \Frob_q, H^1_c( \mathbb P^1_{\overline{\mathbb F}_q}, j_* ( \F_i \otimes \mathcal L_\Lambda) ))  = L ( \pi_i \otimes \Lambda, q^{- s})  $ by \cite[Theorem VI.9(2)]{Lafforgue}. So the equidistribution follows from Theorem \ref{main-geometric}, once we note that Lafforgue's definition of the automorphic $L$-function differs from the classical defintion by the substitution $Z = q^{-s}$. \end{proof}
\section{Sheaf construction}\label{sheaf-construction}

This section is the analogue of \cite[Section 4]{wvqkr}.

Let $r$ be the highest natural number with $p^r \leq d$. View $\mathbb A^d$ as the space parameterizing tuples of a polynomial $f_1$ in $x$ of degree at most $d$, a polynomial $f_2$ in $x$ of degree at most $d/p$, $\dots$, up to a polynomial $f_r$ of degree at most $d/p^4$, such that the coefficients of $f_1 ,\dots, f_r$ of $x^n$ always vanish if $n$ is a multiple of $p$. In other words, in coordinates $a_1,\dots,a_d$, we have $f_i = \sum_{k |  k \neq 0 \mod p, k p^r \leq n}  x^k a_{p^i k}$.  Take a character $\psi: \mathbb Z/p^{r+1} \to \Ql^\times$. Form the $\mathbb Z/p^{r+1}$ covering of $\mathbb A^1 \times \mathbb A^d$ defined by the Witt vector equation $z- F(z) = (f_1,f_2,\dots,f_r)$, with $F$ the Witt vector Frobenius. Its push out by $\psi$ gives us the Artin-Schreier-Witt sheaf $\L_{\psi,r,x} ( f_1(x),f_2(x),\dots,f_r(x))$, a lisse rank $1$ sheaf on $\mathbb A^1 \times \mathbb A^d$.

 Let $\L_{univ} = \L_{\psi,r,x} ( f_1(x),f_2(x),\dots,f_r(x))$ be this sheaf on $\mathbb A^1 \times \mathbb A^d$.

\begin{lemma}\label{universal} For each field extension $\mathbb F_q$ of $\mathbb F_{q_0}$, there is a unique bijection between points $y \in \mathbb A^d(\mathbb F_q)$ and characters $\Lambda: W_{d,q}\to \Ql^\times$ that sends each point $y$ to a character $\Lambda$ such that the restriction $\L_{univ,y}$ of $\L_{univ}$ to the fiber over $y$ is isomorphic to $\L_{\Lambda}$.\end{lemma}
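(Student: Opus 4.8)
The plan is to identify both sides with characters of the relevant quotient of the étale fundamental group, and to match up the Artin–Schreier–Witt construction with the description of $\mathcal L_\Lambda$ coming from class field theory. The key point is that a character $\Lambda$ of $W_{d,q}$ is determined by, and gives rise to, a character of the local unit group $(\mathbb F_q[t]/t^{d+1})^\times$, which by the theory of Artin–Schreier–Witt extensions is precisely parameterized by Witt vectors of length $r+1$ over $\mathbb F_q$ with the divisibility restriction imposed above, together with the conductor constraint at $\infty$. So the bijection will come from matching the Witt vector coordinates $(f_1(x),\dots,f_r(x))$ evaluated ``at $\infty$'', i.e. as polynomials in $t = x^{-1}$ truncated mod $t^{d+1}$, with the data classifying the character $\Lambda$.

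First I would recall that $\mathcal L_\Lambda$ is, by Lemma \ref{norm formula}, characterized by the eigenvalue of $\Frob_x$ for $x \in \mathbb F_q$ being $\Lambda(1-tx)$; equivalently, since both $\mathcal L_{univ,y}$ and $\mathcal L_\Lambda$ are rank one lisse sheaves on $\mathbb A^1_{\mathbb F_q}$, they are isomorphic if and only if their Frobenius traces agree at every closed point (a rank one sheaf on $\mathbb A^1$ is determined by its trace function, since $\mathbb A^1$ is geometrically simply connected so the geometric monodromy is trivial and the sheaf is a pullback of a character of $\Gal(\overline{\mathbb F}_q/\mathbb F_q)$ twisted — more precisely, two rank one lisse sheaves with the same trace function at all closed points are isomorphic). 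Then I would compute the trace of $\Frob_x$ on $\mathcal L_{univ,y}$ directly from the Artin–Schreier–Witt equation $z - F(z) = (f_1(x),\dots,f_r(x))$: the Frobenius eigenvalue is $\psi$ applied to the Witt-vector ``trace'' of $(f_1(x),\dots,f_r(x))$ along $\mathbb F_q$, which is an additive character of $\mathbb F_q$ in the Witt vector sense, evaluated at a point depending polynomially on $x$ and on $y$. The content is then to check that as $y$ ranges over $\mathbb A^d(\mathbb F_q)$, the resulting family of trace functions $x \mapsto \psi(\dots)$ ranges bijectively over the functions $x \mapsto \Lambda(1-tx)$, $\Lambda \in \widehat{W_{d,q}}$.

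To do that matching I would use the standard isomorphism between the group of characters of $(\mathbb F_q[t]/t^{d+1})^\times$ and a group of Witt vectors: a character of $1 + t\mathbb F_q[[t]]$ of conductor dividing $t^{d+1}$ corresponds, via the Artin–Schreier–Witt / Kummer-type pairing, to a Witt vector of length $r+1$ (with $r$ the largest integer with $p^r \le d$) whose coordinates are polynomials in $t$ of the prescribed degrees with coefficients of $x^n$ vanishing when $p \mid n$ — exactly the shape of $(f_1,\dots,f_r)$ in the coordinates $a_1,\dots,a_d$. The divisibility restriction in the definition of $\mathbb A^d$ is precisely the reduction that removes the redundancy $z \mapsto z + (\text{something})^p - (\text{something})$ in Witt coordinates, i.e. it picks a unique representative; this is why the correspondence is a \emph{bijection} and not merely a surjection. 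I would cite or reproduce this classical fact (it is essentially the computation in \cite[Section 4]{wvqkr}, adapted to our normalization), and then check compatibility with the normalization in Lemma \ref{cft}: the character $\Lambda$ of $W_{d,q}$ restricted to $\mathbb F_q[[x^{-1}]]^\times$ is the projection to $(\mathbb F_q[t]/t^{d+1})^\times$, and under the identification above the sheaf $\mathcal L_\Lambda$ pulled back to $\mathbb A^1$ has exactly the trace function produced by the Witt vector whose coordinates are the $f_i(t)$; pulling back along $t \mapsto 1 - tx$ (cf. the notation $\mathcal L_{\Lambda(1-tX)}$ of \cite{wvqkr}) reproduces $\mathcal L_{univ,y}$ with $y$ the coefficient vector of the $f_i$. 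Uniqueness of the bijection follows because a rank one sheaf determines its character, and the character determines $y$ by the injectivity just established.

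The main obstacle is the bookkeeping in the third paragraph: getting the Witt vector Artin–Schreier theory set up with exactly the right normalization so that the parameter space $\mathbb A^d$ — with its specific constraint that coefficients of $x^n$ vanish for $p \mid n$ and the specific degree bounds $d, d/p, \dots, d/p^r$ — maps \emph{bijectively} (not just surjectively, and not just injectively up to the choice of uniformizer) onto $\widehat{W_{d,q}}$, uniformly in $q$. One has to check that the constraint is exactly the right one to kill the kernel of the Artin–Schreier–Witt map $\wp = F - 1$ on the truncated Witt vectors and simultaneously that the count matches, $|\mathbb A^d(\mathbb F_q)| = q^d = |W_{d,q}| = |\widehat{W_{d,q}}|$, which pins down the bijection once injectivity is known. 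I expect the rest — computing the Frobenius trace on an Artin–Schreier–Witt sheaf and comparing with Lemma \ref{norm formula} — to be routine given the results already in the excerpt.
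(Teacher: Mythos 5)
Your proposal is essentially the same argument as the paper's: use Brylinski's Artin--Schreier--Witt theory to bound the conductor and identify when two Witt vectors give the same character, observe that the constraint that the coefficient of $x^n$ vanishes for $p \mid n$ is precisely what removes the $\wp = F-1$ ambiguity and forces injectivity, and then conclude bijectivity by counting $|\mathbb A^d(\mathbb F_q)| = q^d = |W_{d,q}|$. The one small shortcut you're missing relative to the paper is that the "split at $0$" condition needed for the character to factor through $W_{d,q}$ (rather than a larger ray class group) comes for free from the fact that the $f_i$ have zero constant term; your detour through trace functions and a Chebotarev-type identification of rank-one sheaves is correct but heavier than needed, since the paper works directly with the Galois-theoretic description of $\mathcal L_{univ,y}$ and never needs to compare trace functions.
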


\begin{proof}  For each point $y$ in $\mathbb A^d(\mathbb F_q)$, $\L_{univ,y}$ is a rank one lisse sheaf on $\mathbb A^1$ i.e. a character $\pi_1(\mathbb A^1_{\mathbb F_q}) \to \Ql^{\times}$. By \cite[Theorem 1]{brylinski}, this character has conductor exponent at most $d+1$ at $\infty$. Because the polynomials $f_1,\dots, f_r$ have zero constant term, they vanish at $x=0$, so the action of Frobenius on the stalk of  $\mathcal L_{univ,y}$ at the point $0$ is trivial.  Hence the monodromy character of $\L_{univ,y}$ factors through the Galois group of the maximal abelian extension of $\mathbb F_q(t)$, unramified away from $\infty$, split at $0$, and with conductor exponent at most $d+1$ at $\infty$. By Lemma \ref{cft} and \ref{cft-galois}, this character factors uniquely through $W_{d,q}$.

It remains to check the existence of a bijection. In other words, it is sufficient to check that each all characters of $W_{d,q}$ appear exactly once this way. Because $W_{d,q}$ is a quotient of the inertia group $\mathbb F_q[[t]]^\times$ at $\infty$ in the Galois group of the maximal ableian extension, we can work with characters of $\mathbb F_q[[t]]^\times$. By \cite[Theorem 1]{brylinski}, two different Witt vectors $(f_1(x),\dots,f_r(x))$ and $(f_1'(x),\dots,f_r'(x))$ produce the same character only if we have \[(f_1(x),\dots,f_r(x))-(f_1'(x),\dots,f_r'(x))= ( a_1(x)^p,\dots, a_r(x)^p ) - (a_1(x),\dots, a_r(x)) \]  in the ring of Witt vectors for some $(a_1,\dots,a_r)$. That can only happen if the highest degree that $f$ and $f'$ differ in is equal to the highest degree of $(a_1(x)^p,\dots,a_r(x)^p)$ and thus is multiple of $p$, which is impossible for $( f_1(x),f_2(x),\dots,f_r(x))$ as they are $0$ in every degree a multiple of $p$. Thus all characters of conductor at most $d$ appear at most once this way. Then because there are $q^d$ points of $\mathbb A^d(\mathbb F_q)$, and $q^d$ characters $W_{d,q}\to \Ql^\times$, all characters must appear exactly once this way, as desired.

 \end{proof}
 
 While we have constructed this bijection by constructed the character $\Lambda$ from the point $y$, in applying it will be more convenient to use the reverse, and we therefore set $y_{\Lambda}$ as the point of $\mathbb A^d(\mathbb F_q)$ corresponding to $\Lambda$.

Let $\F$ be a geometrically irreducible middle extension sheaf on $\mathbb A^1_{\mathbb F_{q_0}}$, pure of weight $w$.

\begin{defi} Let $pr_1: \mathbb A^1 \times \mathbb A^d \to \mathbb A^1$ and $pr_2: \mathbb A^1 \times \mathbb A^d \to \mathbb A^d$ be the projections. Let  \[ \mathcal G=  R^1pr_{2!} (pr_1^* \F \otimes \L_{univ}).\] \end{defi}

\begin{lemma}\label{shconstruction}For each field extension $\mathbb F_q$ of $\mathbb F_{q_0}$ and each character $\Lambda: W_{d,q} \to \Ql^\times$, \[\mathcal G_{y_\lambda} = H^1_c(\mathbb A^1_{\Fq}, \F \otimes \L_\Lambda).\]  \end{lemma}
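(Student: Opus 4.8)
The statement is that the stalk of the relative cohomology sheaf $\mathcal G = R^1 pr_{2!}(pr_1^* \F \otimes \L_{univ})$ at the point $y_\Lambda \in \mathbb A^d(\mathbb F_q)$ agrees with $H^1_c(\mathbb A^1_{\Fq}, \F \otimes \L_\Lambda)$. The plan is to combine the proper base change theorem for higher direct images with compact support with the identification of $\L_{univ}$ restricted to the fiber over $y_\Lambda$ provided by Lemma \ref{universal}.

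First I would invoke proper base change for $R^i pr_{2!}$: for any morphism and any point $y$ of the base (over an algebraically closed field, here $\Fq$), the stalk $(R^i pr_{2!} \mathcal H)_y$ is canonically isomorphic to $H^i_c$ of the fiber $pr_2^{-1}(y)$ with coefficients in the restriction $\mathcal H|_{pr_2^{-1}(y)}$. Here the fiber of $pr_2$ over $y_\Lambda$ is $\mathbb A^1_{\Fq}$ (a copy of the affine line, via $pr_1$), and the restriction of $pr_1^* \F \otimes \L_{univ}$ to this fiber is $\F \otimes \L_{univ, y_\Lambda}$, since $pr_1$ restricted to the fiber is the identification with $\mathbb A^1$. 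Thus $\mathcal G_{y_\Lambda} = H^1_c(\mathbb A^1_{\Fq}, \F \otimes \L_{univ, y_\Lambda})$.

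Next I would apply Lemma \ref{universal}, whose defining property is precisely that the point $y_\Lambda$ corresponds to $\Lambda$ in such a way that $\L_{univ, y_\Lambda} \cong \L_\Lambda$. Substituting this isomorphism into the previous line gives $\mathcal G_{y_\Lambda} = H^1_c(\mathbb A^1_{\Fq}, \F \otimes \L_\Lambda)$, as desired. One should note that the isomorphism of sheaves on the fiber induces an isomorphism on $H^1_c$ compatible with the Frobenius action, since $y_\Lambda$ is an $\mathbb F_q$-rational point and all the identifications above are defined over $\mathbb F_q$, so the equality is one of Frobenius modules, not merely of vector spaces — this is the form needed for the later trace-function computations.

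The main obstacle is essentially bookkeeping rather than a genuine difficulty: one must be careful that the isomorphism $\L_{univ, y_\Lambda} \cong \L_\Lambda$ from Lemma \ref{universal} is an isomorphism of sheaves on $\mathbb A^1_{\mathbb F_q}$ (equivalently of $\pi_1$-representations including the arithmetic structure), not just on $\mathbb A^1_{\Fq}$, so that proper base change gives the claimed equality equivariantly for Frobenius. Since Lemma \ref{universal} constructs the character $\Lambda$ from $y$ as the monodromy character of $\L_{univ,y}$ over $\mathbb F_q$, this compatibility is built in, and the rest is a direct citation of proper base change.
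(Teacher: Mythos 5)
Your proposal matches the paper's proof exactly: both apply proper base change to identify $\mathcal G_{y_\Lambda}$ with $H^1_c(\mathbb A^1_{\Fq}, \F \otimes \L_{univ,y_\Lambda})$ and then substitute $\L_{univ,y_\Lambda} \cong \L_\Lambda$ via Lemma \ref{universal}. The extra remarks you make about Frobenius-equivariance are correct and worth keeping in mind, but they only flesh out the same two-step argument the paper gives.
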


\begin{proof} By the proper base change theorem and Lemma \ref{universal}, \[\mathcal G_y = H^1_c(\mathbb A^1_{\Fq}, \F \otimes \L_{univ,y})=  H^1_c(\mathbb A^1_{\Fq}, \F \otimes \L_\Lambda)\] \end{proof}

\begin{lemma}\label{swan-lemma} Let $V_1, \dots, V_t$ be the irreducible Jordan-H\"{o}lder factors of the inertia representation of $\mathcal F$ at $\infty$, and let $s_1,\dots,s_t$ be their slopes.

There exists a  unique subset $T \subset \overline{\mathbb F}_q$, of size at most $t$ (and therefore at most $m$), such that for $y = (a_1,\dots,a_{d})\in \mathbb A^d$, if $a_d \not\in T$, the Swan conductor at $\infty$ of $\F \otimes \L_{univ,y}$ is equal to $\sum_{i=1}^t \max ( d, s_i) \dim V_i $, and if $a_d \in T$, the Swan conductor is less than this.

Moreover, if $d$ is less than the lowest slope of the $V_i$s, $T$ is empty, and if $d$ is greater than the highest slope of the $V_i$s, we $T =\{0\}$. \end{lemma}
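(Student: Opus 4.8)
The plan is to analyze the Swan conductor of $\F \otimes \L_{univ,y}$ at $\infty$ by breaking the local inertia representation of $\F$ into its Jordan–Hölder factors $V_1,\dots,V_t$ and studying how each behaves when tensored with the local inertia representation of $\L_{univ,y}$. The key structural input is Brylinski's description (cited as \cite[Theorem 1]{brylinski}) of the Artin–Schreier–Witt character attached to the Witt vector $(f_1(x),\dots,f_r(x))$, which identifies its break at $\infty$ (as a function of $t = x^{-1}$) in terms of the degrees of the $f_i$; since $f_1$ has degree at most $d$ with the top coefficient equal to $a_d$ (up to the normalization in the construction), the break of $\L_{univ,y}$ at $\infty$ is exactly $d$ whenever $a_d \neq 0$, and strictly smaller when $a_d = 0$. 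This is what will ultimately produce the special set $T$.

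First I would recall the general principle for Swan conductors of tensor products: if $M$ is an irreducible representation of the inertia group with a single break $s$, and $L$ is a character with break $\delta$, then $M \otimes L$ has all breaks equal to $\max(s,\delta)$ provided $s \neq \delta$, while if $s = \delta$ there can be cancellation — the breaks of $M \otimes L$ may drop below $s$, and they do so precisely for those $L$ whose "leading term" at $\infty$ matches that of $M$. Applying this factor-by-factor to the $V_i$ (each of which, being irreducible, is isotypic of a single slope $s_i$ after restriction to wild inertia, so one can speak of "its slope"), one gets that the Swan conductor of $\F \otimes \L_{univ,y}$ is $\sum_{i=1}^t \max(d, s_i)\dim V_i$ unless the break $d$ of $\L_{univ,y}$ coincides with some $s_i = d$ and, moreover, the leading coefficient of $\L_{univ,y}$ at $\infty$ — which is a linear (or additive) function of $a_d$ — hits the finite set of leading coefficients appearing in those $V_i$ with $s_i = d$. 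That finite set of "bad" values of $a_d$ is the desired $T$; its size is at most the number of such factors $V_i$, hence at most $t$, hence at most $m$. Uniqueness of $T$ is immediate once one demands that $T$ be exactly the locus where the Swan conductor drops.

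The last two assertions then fall out of the same picture. If $d$ is strictly less than every slope $s_i$, then for each $i$ we have $\max(d,s_i) = s_i > d = $ break of $\L_{univ,y}$, so there is never a coincidence of breaks and hence never any cancellation: $T = \emptyset$. If $d$ is strictly greater than every $s_i$, then $\max(d,s_i) = d$ and the break $d$ of $\L_{univ,y}$ strictly dominates every $s_i$; in that regime $\F \otimes \L_{univ,y}$ has all breaks equal to $d$ when $a_d \neq 0$, and when $a_d = 0$ the break of $\L_{univ,y}$ drops below $d$, which (being still, generically, above the $s_i$ or at worst equal) forces the Swan conductor to drop — so $T = \{0\}$. (One must check that when $a_d = 0$ the drop is genuine and not accidentally compensated; this follows because the degrees of $f_1,\dots,f_r$ are constrained so that the next break down is determined by the next nonvanishing coefficient, and in any case is $< d$.)

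**The main obstacle** I anticipate is the careful bookkeeping of "leading terms" of wild characters at $\infty$ in the Witt-vector setting: one needs that the break-$d$ part of the local character $\L_{univ,y}$ depends on the point $y$ only through $a_d$ (in an affine-linear, or additive-polynomial, way), so that matching it against the finitely many leading terms coming from the $V_i$ with $s_i = d$ cuts out a genuinely finite — indeed size-$\le t$ — subset of the $a_d$-line rather than something larger. This is where Brylinski's theorem and the specific shape of the construction in this section (the vanishing of coefficients in degrees divisible by $p$, so that the relevant top piece is a single honest Artin–Schreier character $\psi(a_d t^{-d}\cdot(\text{stuff}))$ rather than a genuinely Witt-vector phenomenon in the top break) must be used. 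Once that linearity is in hand, the counting and the uniqueness are routine, and the two boundary cases are immediate.
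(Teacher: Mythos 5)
Your overall strategy coincides with the paper's: decompose the inertia representation of $\F$ at $\infty$ into Jordan--H\"older factors $V_i$, apply the $\max$-rule for slopes of tensor products, and identify $T$ as the locus in the $a_d$-line where some factor's contribution to the Swan conductor drops. However, you identify the ``main obstacle'' --- tracking the leading terms of the local characters of $\L_{univ,y}$ at $\infty$ and matching them against leading terms inside each $V_i$ with $s_i = d$ --- and then leave it partly unresolved. The paper sidesteps this entirely with a cleaner trick: once one knows that for some $y$ the slope of $V_i \otimes \L_{univ,y}$ dips below $\max(s_i,d)$, one compares an arbitrary $y'$ to this $y$ by tensoring with $\L_{univ,y'} \otimes \L_{univ,y}^{-1}$, which is the Artin--Schreier--Witt sheaf of the \emph{difference} Witt vector, whose leading coefficient is exactly $a_d(y') - a_d(y)$. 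The standard two-slope argument then shows the drop persists iff $a_d(y') = a_d(y)$, with no need to understand the internal leading-term structure of $V_i$ at all. This is what makes the bound $|T| \le t$ and the uniqueness of the critical $a_d$-value for each $i$ come out for free.

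There is also a small gap in your main paragraph's description of $T$: you characterize $T$ as the finite set of $a_d$-values matching leading coefficients of those $V_i$ with $s_i = d$, but this omits the fact that $0 \in T$ whenever \emph{some} $s_i < d$ (not just in the extreme case where \emph{all} slopes are $<d$). When $a_d = 0$ the break of $\L_{univ,y}$ drops below $d$, so for any $V_i$ with $s_i < d$ the contribution $\max(s_i,\text{break}) \dim V_i$ is strictly less than $d \dim V_i = \max(s_i,d)\dim V_i$, forcing the total Swan conductor below the target value regardless of what happens with the slope-$d$ factors. You do effectively cover this in the boundary case at the end, but in the mixed regime (some $s_i < d$, some $s_i = d$) your main criterion would produce a $T$ that may fail to contain $0$. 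The paper's formulation, where $T$ is assembled factor-by-factor as the set of $a_d$-values at which \emph{each} $V_i$'s contribution drops, correctly picks up $0$ from every $V_i$ with $s_i < d$.
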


\begin{proof} $T$ is unique because we can determine whether $a \in T$ for $T$ satisfying this condition by calculating the Swan conductor of $\F \otimes \L_{univ,y}$ at a point $y$ with $a_d=a$.

Let $V_i$ be one such an irreducible component. Then $V_i \otimes \L_{univ,y}$ is irreducible of slope at most $\max(d,s_i)$. From this it follows that the Swan conductor of $\F \otimes \L_{univ,y}$ is at most $\sum_{i=1}^t \max ( d, s_i) \dim V_i $, and it is equal to this sum if and only if the slope of $ V_i \otimes \L_{univ,y}) $ is $\max(d,s_i)$ for all $i$.

 If $s_i>d$ then the slope of $ V_i \otimes \L_{univ,y}$ is automatically  $ s_i \max(d,s_i)$. Assume that for some $y$, the slope of $V_i \otimes \L_{univ,y}$ is less than $\max(s_i,d)$. It follows that $\max(s_i,d)=d$. Then we will show that for any $y' \in \mathbb A_d$,  the slope of  $ V_i \otimes \L_{univ,y'} $ is less than $d$ if and only if $a_d(y)=a_d(y')$. 
 
 To do this, note that $\mathcal L_{univ,y'} \otimes \mathcal L_{univ,y}^{-1} $ can be expressed as an Artin-Schreier-Witt sheaf of a Witt vector obtained by subtracting the Witt vectors of $y$ from $y'$. This is a Witt vector of degree at most $d$, whose leading term is $a_d(y')-a_d(y)$. If this leading term vanishes, then the slope of  $\mathcal L_{univ,y'} \otimes \mathcal L_{univ,y}^{-1} $  is less than $d$, and so the slope of \[ V_i \otimes \L_{univ,y'} =\left( \mathcal L_{univ,y'} \otimes \mathcal L_{univ,y}^{-1}  \right) \otimes  \left( V_i \otimes \L_{univ,y}\right)\] is less than $d$. If this leading term is nonvanishing, then the slope of  $\mathcal L_{univ,y'} \otimes \mathcal L_{univ,y}^{-1} $  is $d$, so because the slope of $\left( V_i \otimes \L_{univ,y}\right)$ is less than $d$, their tensor product has slope $d$, as desired.
 
 Thus let $T$ be the union over $i$ such that $s_i=d$ of the $a_d$ value of some point where the slope of $ V_i \otimes \L_{univ,y}$ is less than their maximum value, if it exists. By our calculation of the local slopes, the upper bound is attained if and only if $a_d$ is not equal to any of the elements of $T$.  Furthermore, $T$ is empty if $d< s_i$ for all $i$.
 
 It remains to check that if $d>s_i$ for all $i$ then $T$ consists of the $0$. It suffices to check that if $a_d(y)=0$, then $V_i \otimes \L_{univ,y}$ has slope $< \max(d,s_i)=d$. This follows because $\L_{univ,y}$ has slope less than $d$ in this case, and we assumed $V_i$ has slope less than $d$.
 
 \end{proof}

 \begin{defi} Let $U$ be the subset of $\mathbb A^d_{\mathbb F_q}$ consisting of points where $a_d \not\in T$.\end{defi}

 \begin{lemma}\label{swan-lisse} The sheaf $\mathcal G$ is lisse on $U$. \end{lemma}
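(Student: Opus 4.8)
The plan is to invoke the standard semicontinuity criterion for local constancy of the higher direct image of a constructible sheaf under a smooth morphism: if the Euler characteristic of the fiber is locally constant, and the sheaf in question is concentrated in a single cohomological degree on each fiber, then the sheaf formed by the relative cohomology is lisse. Concretely, I would first record that $pr_2\colon \mathbb A^1\times \mathbb A^d \to \mathbb A^d$ is an affine smooth morphism of relative dimension one, and that on each geometric fiber over a point $y$ the sheaf $pr_1^*\mathcal F\otimes \mathcal L_{univ}$ restricts to $\mathcal F\otimes \mathcal L_{univ,y}$, which is a middle extension sheaf on $\mathbb A^1$, pure of weight $w$ (the tensor of a middle extension with a lisse rank one sheaf). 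By Lemma~\ref{cohomologyvanishing}, applied fiber-by-fiber (one checks that for $y\in U$, $\mathcal L_{univ,y}^{-1}$ cannot be geometrically isomorphic to $\mathcal F$, since its Swan conductor at $\infty$ is positive by Lemma~\ref{swan-lemma} while $\mathcal F$ has bounded Swan conductor — or more simply, the wild part forces $\mathcal L_{univ,y}$ to be wildly ramified for $y$ with $a_d\neq 0$, and one handles $a_d=0$ separately if it lies in $U$), the cohomology $H^i_c(\mathbb A^1_{\overline{\mathbb F}_q}, \mathcal F\otimes \mathcal L_{univ,y})$ is concentrated in degree $i=1$. Hence $R^i pr_{2!}(pr_1^*\mathcal F\otimes \mathcal L_{univ})$ vanishes for $i\neq 1$ on $U$, and $\mathcal G = R^1 pr_{2!}$ computes the (shifted) total derived pushforward there.

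Next I would compute the Euler characteristic of the fiber using the Grothendieck–Ogg–Shafarevich formula. For $y\in U$, $\mathcal F\otimes \mathcal L_{univ,y}$ is lisse on a dense open $V\subset \mathbb A^1$ (the locus where $\mathcal F$ is lisse, which is independent of $y$), of rank $m$, and its Swan conductor at $\infty$ equals the constant value $\sum_{i=1}^t \max(d,s_i)\dim V_i$ by Lemma~\ref{swan-lemma}, while the drop of $\mathcal F$ at each finite singularity and the Swan conductor at each finite singularity are likewise independent of $y$ (tensoring by the lisse sheaf $\mathcal L_{univ}$ does not change the local behavior at finite points, where $\mathcal L_{univ,y}$ is lisse). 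Therefore $\chi_c(\mathbb A^1_{\overline{\mathbb F}_q}, \mathcal F\otimes \mathcal L_{univ,y})$ is a constant, call it $-N$, for $y$ ranging over $U$. Combined with the degree concentration, $\dim H^1_c(\mathbb A^1_{\overline{\mathbb F}_q}, \mathcal F\otimes \mathcal L_{univ,y}) = N$ for all $y\in U$.

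With the rank of the fibers of $\mathcal G$ constant on $U$, I would conclude lisseness either by Deligne's semicontinuity theorem (Laumon) for $R pr_{2!}$ of a family with locally constant Swan conductors, or more elementarily: $\mathcal G|_U$ is a constructible sheaf whose formation commutes with arbitrary base change (by proper base change for $R pr_{2!}$ and the degree-one concentration, which makes $R pr_{2!}$ behave like $Rpr_{2*}$ up to the single degree), and whose stalks all have the same dimension $N$; such a constructible sheaf on a normal (indeed smooth) base is automatically lisse. I expect the main obstacle to be the bookkeeping in the application of Laumon's semicontinuity theorem — specifically, verifying that the relevant Swan conductors (at the finite singularities of $\mathcal F$ and at $\infty$, viewed in the family over $\mathbb A^d$) are locally constant precisely on $U$, which is exactly the content of Lemma~\ref{swan-lemma}; once that input is in hand the rest is formal. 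Alternatively, if one prefers to avoid semicontinuity, the fallback is the stalk-dimension argument just sketched, whose only subtlety is checking that $R pr_{2!}$ commutes with base change in each degree on $U$, which follows from the vanishing in degrees $\neq 1$.
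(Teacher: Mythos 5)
Your main route---constancy of Swan conductors at the finite singularities (since $\mathcal L_{univ}$ is lisse on all of $\mathbb A^1\times\mathbb A^d$) and at $\infty$ (Lemma \ref{swan-lemma}), followed by Deligne's semicontinuity theorem as in Laumon---is exactly the paper's proof. The extra machinery you assemble is not needed for that route: \cite[Theorem 2.1.1]{LaumonSMF} yields directly that each $R^i pr_{2!}$ is lisse on $U$ once the Swan conductors are locally constant. In particular there is no need to establish concentration in degree one via Lemma \ref{cohomologyvanishing}, nor to compute the common Euler characteristic via Grothendieck--Ogg--Shafarevich first, and the degree-concentration step drags in an edge case (whether $\mathcal F$ could be geometrically isomorphic to some $\mathcal L_\Lambda$ with $y_\Lambda\in U$) that you only wave at.

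Your ``elementary'' fallback has a genuine gap. A constructible $\overline{\mathbb Q}_\ell$-sheaf on a smooth base whose stalks all have the same dimension need not be lisse: on $\mathbb A^1$ take $j_!\mathcal L\oplus i_*V$ with $j:\mathbb G_m\hookrightarrow\mathbb A^1$, $\mathcal L$ a nontrivial rank-one lisse sheaf on $\mathbb G_m$, $i$ the inclusion of the origin, and $\dim V=1$; every stalk is one-dimensional, yet the sheaf is not lisse. Constant rank controls the dimensions of source and target of the cospecialization maps but not whether those maps are isomorphisms, which is what lisseness actually requires. Compatibility with base change for $Rpr_{2!}$ is automatic (proper base change for $f_!$) and hence carries no extra information, and it does not make $Rpr_{2!}$ ``behave like $Rpr_{2*}$''---the forget-supports map is not an isomorphism for the non-proper $pr_2$, with or without degree concentration. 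If you wanted an alternative to the semicontinuity theorem you would need to argue local acyclicity directly, not merely constancy of the stalk rank.
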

 
 \begin{proof} Because $\mathcal L_{univ}$ is lisse on $\mathbb A^1$,  $\mathcal F \otimes \L_{univ}$ is lisse away from a union of horizontal divisors, and has constant swan conductors on those divisors. By Lemma \ref{swan-lemma}, the Swan conductor of $\mathcal F \otimes \L_{univ}$ at $\infty$ is constant on $U$.
 Thus by Deligne's semicontinuity theorem \cite[Theorem 2.1.1]{LaumonSMF}, $\mathcal G$ is lisse on $U$. \end{proof}

\begin{lemma}\label{shweight} The sheaf $\mathcal G$ is everywhere mixed of weight $\leq w+1$, and its restriction to $U$ is pure of weight $w+1$. \end{lemma}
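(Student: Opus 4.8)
The plan is to establish the two assertions separately, each as a fairly direct consequence of Deligne's Weil~II; the one piece of real content has already been isolated, namely the Swan-conductor computation of Lemma~\ref{swan-lemma}. For the bound on all of $\mathbb A^d$: the sheaf $\L_{univ}$ is lisse of rank one with Frobenius eigenvalues roots of unity, hence $\iota$-pure of weight $0$, and $\F$ is a middle extension sheaf that is $\iota$-pure of weight $w$, hence mixed of weight $\leq w$; therefore $pr_1^* \F \otimes \L_{univ}$ on $\mathbb A^1 \times \mathbb A^d$ is mixed of weight $\leq w$. Applying the fundamental estimate of Weil~II for higher direct images with compact support (\cite[Theorem 3.3.1]{weilii}, the relative form of the weight bound used in Lemma~\ref{cohweight}) to the morphism $pr_2$ shows that $\mathcal G = R^1 pr_{2!}(pr_1^* \F \otimes \L_{univ})$ is mixed of weight $\leq w+1$ everywhere.

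For purity on $U$, I would recall from Lemma~\ref{swan-lisse} that $\mathcal G$ is lisse on $U$, so purity can be checked stalkwise: it suffices to show that for each closed point $y$ of $U$, with residue field $\mathbb F_q$, the eigenvalues of $\Frob_q$ on $\mathcal G_y$ all have $\iota$-weight $w+1$. By Lemma~\ref{shconstruction} we have $\mathcal G_y = H^1_c(\mathbb A^1_{\Fq}, \F \otimes \L_{univ,y})$. The crucial point is that for $y \in U$ the sheaf $\F \otimes \L_{univ,y}$ is totally wild at $\infty$: by Lemma~\ref{swan-lemma} its Swan conductor at $\infty$ equals $\sum_{i} \max(d,s_i)\dim V_i$, and since each Jordan--H\"{o}lder factor $V_i \otimes \L_{univ,y}$ of the local monodromy representation at $\infty$ is irreducible of slope at most $\max(d,s_i)$, equality forces every such factor to have slope exactly $\max(d,s_i) \geq d \geq 1 > 0$. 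Hence no Jordan--H\"{o}lder factor of $(\F \otimes \L_{univ,y})|_{I_\infty}$ is trivial, so $(\F \otimes \L_{univ,y})^{I_\infty} = 0$, and therefore $j_!(\F \otimes \L_{univ,y}) = j_*(\F \otimes \L_{univ,y})$ on $\mathbb P^1_{\Fq}$.

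Consequently $\mathcal G_y = H^1_c(\mathbb A^1_{\Fq}, \F \otimes \L_{univ,y}) = H^1(\mathbb P^1_{\Fq}, j_*(\F \otimes \L_{univ,y}))$, and since $\F \otimes \L_{univ,y}$ is a middle extension sheaf on $\mathbb P^1_{\Fq}$ that is $\iota$-pure of weight $w$, this group is $\iota$-pure of weight $w+1$ by Weil~II, exactly as in the proof of Lemma~\ref{cohweight}. As this holds at every closed point of $U$, the lisse sheaf $\mathcal G|_U$ is $\iota$-pure of weight $w+1$. The only step with genuine content is the verification that the middle-extension condition at $\infty$ holds at \emph{every} point of $U$ and not merely at all but finitely many (as in Lemma~\ref{middle}); this is exactly what the Swan-conductor computation of Lemma~\ref{swan-lemma} guarantees, after which purity is a formal consequence of Weil~II.
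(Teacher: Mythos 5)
Your proof is correct, and the first half (the everywhere-mixed bound via Weil~II applied to $pr_{2!}$) is the same as the paper's. For the purity on $U$ you take a genuinely different route. The paper argues by propagation: by Lemma~\ref{cohweight} there are at most $m$ bad values of $\Lambda$, so choosing $\mathbb F_q$ with $|U(\mathbb F_q)| > m$ produces at least one point of $U$ where the stalk is pure of weight $w+1$, and then Deligne's result [Weil~II, Cor.~1.8.12] that a lisse mixed sheaf on a connected normal scheme has the same weights at every closed point upgrades purity at one point to purity on all of $U$. You instead verify purity stalk-by-stalk at \emph{every} closed point of $U$: the Swan-conductor equality of Lemma~\ref{swan-lemma} forces every Jordan--H\"older factor $V_i \otimes \L_{univ,y}|_{I_\infty}$ to have slope exactly $\max(d,s_i) \ge d \ge 1 > 0$, so $(\F \otimes \L_{univ,y})^{I_\infty}=0$, so $j_! = j_*$ at $\infty$, and Weil~II purity for middle extensions on $\mathbb P^1$ applies directly. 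Both arguments are sound. Yours is a bit more work locally but is more transparent about \emph{why} $U$ is exactly the purity locus, dispenses with the counting trick and the citation to Cor.~1.8.12, and as a byproduct directly establishes the assertion $j_!(\F \otimes \L_\Lambda) = j_*(\F \otimes \L_\Lambda)$ for all $y_\Lambda \in U$ — a claim of Theorem~\ref{main-geometric} that the paper's route only recovers indirectly from purity of $H^1_c$. The paper's route is shorter given that Lemma~\ref{cohweight} is already available, and isolates the global input (Cor.~1.8.12) from the local one.
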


\begin{proof} The sheaf $\F$ is mixed of weight $\leq w$ and $\L_{univ}$ is pure of weight $0$ (because its arithmetic monodromy factors through $\mathbb Z/p^{r+1}$ and so is finite) , so $pr_1^* \F \otimes \L_{univ}$ is mixed of weight $\leq w$, so $\mathcal G$ is mixed of weight $\leq w+1$ by \cite[Theorem 1]{weilii}.

Let $\mathbb F_q$ be an extension of $\mathbb F_{q_0}$ large enough that $U(\mathbb F_q)$ has greater than $m$ points. Then on at least one point of $U(\mathbb F_q)$, the stalk of $\mathcal G$ is pure of weight $w+1$ by Lemmas \ref{cohweight} and \ref{shconstruction}. Hence by \cite[Corollary 1.8.12]{weilii}, $\G$ is pure of weight $w+1$ on $U$. \end{proof}

\begin{defi} Let $N$ be the rank of $\mathcal G$ on $U$.\end{defi}

 We will shortly give a concrete lower bound for $N$, and an exact formula as long as $d$ is sufficiently large with respect to $\F$, which should be useful in applications.

\begin{defi} For each point $x \in \mathbb A^1_{\Fq}$, let $\operatorname{drop}(\F,x)$ be the drop of $\F$ at $x$ and let $\operatorname{swan}(\F,x)$ be the Swan conductor of $\F$ at $x$. Then let \[c_\F = \sum_{x \in \mathbb A^1(\Fq)} (\operatorname{drop}(\F,x)+ \operatorname{swan}(\F,x)).\] In other words, $c_\F$ is the logarithmic Artin conductor of $\F$, not counting the contribution at $\infty$.\end{defi}

\begin{lemma}\label{rank formula} We have \[N \geq m(d-1) + c_{\F}\] and if $d$ is at least the highest slope of $\F$ at $\infty$, then this is an equality.

\end{lemma}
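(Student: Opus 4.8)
The plan is to compute $N = \dim \mathcal G_{y}$ for a generic $y \in U$ via the Euler–Poincaré (Grothendieck–Ogg–Shafarevich) formula applied to the sheaf $\mathcal F \otimes \mathcal L_{univ,y}$ on $\mathbb A^1$, together with the cohomology vanishing of Lemma \ref{cohomologyvanishing}. First I would note that for $y \in U$ generic, $\mathcal L_{univ,y} = \mathcal L_\Lambda$ for a character $\Lambda$ with conductor exponent exactly $d+1$ at $\infty$, so $\mathcal F$ is certainly not geometrically isomorphic to $\mathcal L_{\Lambda^{-1}}$ (the latter is ramified at $\infty$ with large Swan conductor while $\mathcal F$ has bounded ramification there, or one can simply exclude the finitely many bad $\Lambda$); hence by Lemma \ref{cohomologyvanishing}, $H^0_c = H^2_c = 0$ and $N = -\chi_c(\mathbb A^1_{\overline{\mathbb F}_q}, \mathcal F \otimes \mathcal L_{univ,y})$.

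Next I would apply the Euler–Poincaré formula on $\mathbb P^1$ (or directly on $\mathbb A^1$, being careful about the point at $\infty$). For a middle extension sheaf on $\mathbb A^1$ of generic rank $m$, the formula gives
\[
-\chi_c(\mathbb A^1_{\overline{\mathbb F}_q}, \mathcal F \otimes \mathcal L_{univ,y}) = -m + \sum_{x \in \mathbb A^1} \bigl( \operatorname{drop}_x + \operatorname{swan}_x \bigr)(\mathcal F \otimes \mathcal L_{univ,y}) + \operatorname{swan}_\infty(\mathcal F \otimes \mathcal L_{univ,y}),
\]
where the term $-m$ comes from $\chi_c(\mathbb A^1, \text{constant of rank } m) = m$ adjusted for the generic rank, i.e.\ more precisely $-\chi_c = m(2g-2+\#\text{pts at infinity}) + \text{conductor terms} = -m + (\text{local terms})$ with $g=0$ and one point at infinity. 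Since $\mathcal L_{univ,y}$ is lisse on all of $\mathbb A^1$, tensoring by it does not change the drop or Swan conductor of $\mathcal F$ at any finite point $x$, so $\sum_{x \in \mathbb A^1}(\operatorname{drop}_x + \operatorname{swan}_x)(\mathcal F \otimes \mathcal L_{univ,y}) = c_{\mathcal F}$. It remains to handle the contribution at $\infty$: by Lemma \ref{swan-lemma}, for $y \in U$ the Swan conductor at $\infty$ of $\mathcal F \otimes \mathcal L_{univ,y}$ equals $\sum_{i=1}^t \max(d, s_i)\dim V_i$. Therefore
\[
N = -m + c_{\mathcal F} + \sum_{i=1}^t \max(d, s_i)\dim V_i.
\]
Since $\sum_i \dim V_i = m$, we have $\sum_i \max(d,s_i)\dim V_i \geq d \sum_i \dim V_i = md$, giving $N \geq -m + c_{\mathcal F} + md = m(d-1) + c_{\mathcal F}$, with equality precisely when $\max(d,s_i) = d$ for every $i$, i.e.\ when $d$ is at least the highest slope $s_i$ of $\mathcal F$ at $\infty$. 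This is exactly the claimed bound and equality condition.

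The main obstacle I anticipate is bookkeeping at $\infty$ rather than any deep input: one must be careful that $\mathcal F$ is a \emph{middle extension}, so the local term at a finite singularity $x$ is $\operatorname{drop}_x + \operatorname{swan}_x$ (with no separate $H^1$ of inertia contribution beyond the drop, since middle extension means the stalk equals the inertia invariants), and one must correctly account for whether $\infty$ contributes a ``drop'' term — here $\mathcal F \otimes \mathcal L_{univ,y}$ need not be a middle extension at $\infty$, but $H^1_c(\mathbb A^1, -) = H^1(\mathbb P^1, j_!(-))$ and the Euler characteristic computation via $j_!$ only sees the Swan conductor at $\infty$ (the drop/rank term at $\infty$ is absorbed into the "$-m$" coming from $\chi_c(\mathbb A^1) = 1$ being one less than $\chi_c(\mathbb P^1)=2$). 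A clean way to avoid any ambiguity is to compute $\chi_c(\mathbb A^1, \mathcal F \otimes \mathcal L_{univ,y})$ directly: for $\mathcal G$ lisse of rank $n$ on a dense open $U' \subset \mathbb A^1$, $\chi_c(\mathbb A^1, j'_{!*}\mathcal G) = n \cdot \chi_c(\mathbb A^1) - \sum_{x \in \mathbb A^1}(\operatorname{drop}_x + \operatorname{swan}_x) - \operatorname{swan}_\infty = n - c_{\mathcal F} - \operatorname{swan}_\infty(\mathcal F \otimes \mathcal L_{univ,y})$ with $n = m$, then substitute the Swan conductor formula from Lemma \ref{swan-lemma}. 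Checking that no extra correction is needed when passing between $j_!$ and $j_{!*}$ at $\infty$ (the two differ by a punctual sheaf at $\infty$ supported on the inertia invariants of $\mathcal F \otimes \mathcal L_{univ,y}$, which for generic $y$ vanishes by the argument of Lemma \ref{middle}) is the one point requiring care.
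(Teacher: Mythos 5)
Your proposal is correct and takes essentially the same route as the paper: identify $N$ with $-\chi_c(\mathbb A^1_{\overline{\mathbb F}_q}, \mathcal F \otimes \mathcal L_{univ,y})$ via Lemma \ref{cohomologyvanishing}, apply the Grothendieck--Ogg--Shafarevich/Euler--Poincar\'e formula, observe that tensoring by the lisse sheaf $\mathcal L_{univ,y}$ leaves the drop and Swan conductor at finite points unchanged, and control the Swan conductor at $\infty$ via Lemma \ref{swan-lemma}. Your extra discussion of the $j_!$ versus $j_{!*}$ bookkeeping at $\infty$ is more explicit than the paper's one-line invocation of ``the Euler characteristic formula for a middle extension sheaf on a curve,'' but it is the same computation.
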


\begin{proof} In view of Lemma \ref{cohomologyvanishing}, $N = - \chi(\mathbb A^1_{\Fq}, \F \otimes \L_\Lambda)$ for a character $\Lambda$ corresponding to a point in $U(\mathbb F_q)$ for some $q$. The Euler characteristic formula for a middle extension sheaf on a curve then shows that

\[ -\chi(\mathbb A^1_{\Fq}, \F \otimes \L_\Lambda)=  \sum_{x \in \mathbb A^1(\Fq)} (\operatorname{drop}(\F \otimes \L_{\lambda},x)+ \operatorname{swan}(\F\otimes \L_\Lambda,x)  ) + \operatorname{swan}(\F \otimes \L_\Lambda,\infty) -  \chi(\mathbb A^1) \operatorname{rank}(\F \otimes \L_\Lambda)\]

Because $\L_\Lambda$ is lisse of rank $1$ on $\mathbb A^1$, $\operatorname{drop}(\F \otimes \L_{\lambda},x)= \operatorname{drop}(\F ,x)$ and $\operatorname{swan}(\F \otimes \L_{\lambda},x)= \operatorname{swan}(\F ,x)$. 

Similarly $\operatorname{rank}(\F \otimes \L_\Lambda)= \operatorname{rank}(\F)=m$ and $\chi(\mathbb A^1)$ is $1$, so the final term contributes $-m$.

Then by Lemma \ref{swan-lemma}, as long as $y_{\Lambda}$ is in $U$ of $\mathbb A^d$, $\operatorname{swan}( \F \otimes \L_\Lambda,\infty) \geq m d$, and $=md$ if all slopes of $\F$ at $\infty$ are less than $d$. 
\end{proof}

\begin{example} We provide an example of how to compute the rank $N$ in a special case.  Let $E$ be an elliptic curve over $\mathbb F_{q_0}(x)$, so that its Tate module is a rank $2$ Galois representation, and let $\mathcal F$ be the associated middle extension sheaf. Assume that the characteristic of $\mathbb F_{q_0}$ is greater than $3$. Because of this simplyifiying assumption, the Tate module of $E$ is tamely ramified. In particular, its highest slope at $\infty$ is $0$, so Lemma \ref{rank formula} implies that $N = 2 (d-1) + c_\mathcal F$ for all $d$.

In this case, $c_\mathcal F$ is the sum over all geometric points of the drop of the rank of $\mathcal F$ at that point, which is one if $E$ has semistable reduction there and two if $E$ has additive reduction there. In other words, it is the total degree of all the primes of $\mathbb F_{q_0} [x]$ at which $E$ has semistable reduction, plus twice the total degree of all the primes of $\mathbb F_{q_0}[x]$ at which $E$ has additive reduction. This matches the usual formula for the conductor of an elliptic curve, except we add the degrees of the primes instead of multiplying their norms. Here we do not count the prime at $\infty$.

For instance, in the special case of the Legendre curve $y^2 = z(z-1)(z-x)$, whose only primes of bad reduction are $(x)$ and $(x-1)$, both stable, $c_\mathcal F=2$. \end{example}

\begin{lemma} Let $\pi$ be an automorphic representation of $GL_m(\mathbb F_{q_0}(x))$ with central character of finite order and let $\mathcal F$ be the middle extension sheaf associated to the Galois representation corresponding to $\pi$ under the Langlands correspondence.   Let $g(x)$ be a polynomial such that $\pi$ is new of level $g$, in the sense that for all prime polynomials $\mathfrak p$ of $\mathbb F_{q_0}(x)$, $\pi_\mathfrak p$ contains a vector invariant under the mirabolic subgroup mod $\mathfrak p^k$, $k$ the highest power of $\mathfrak p$ dividing $g$, but not modulo any smaller power.  Then \[ N \geq m(d-1) + \deg g\] with equality if $d$ is greater than the depth of $\pi_\infty$. \end{lemma}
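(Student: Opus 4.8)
The plan is to reduce this to Lemma~\ref{rank formula} by translating the two geometric invariants appearing there, namely $c_\F$ and the highest slope of $\F$ at $\infty$, into the automorphic language. Following the proof of Theorem~\ref{main-automorphic}, finiteness of the central character forces $\F$ to be pure of weight $0$, and $\F$ is geometrically irreducible (this being the relevant, i.e.\ cuspidal, case), so Lemma~\ref{rank formula} applies: $N \geq m(d-1) + c_\F$, with equality as soon as $d$ is at least the highest slope of $\F$ at $\infty$. Hence it suffices to prove the two identities $c_\F = \deg g$ and $(\text{highest slope of }\F\text{ at }\infty) = (\text{depth of }\pi_\infty)$.

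Both are instances of local--global compatibility for Lafforgue's correspondence. Let $\sigma$ be the Galois (equivalently, Weil--Deligne) representation underlying $\F$. By \cite[Theorem VI.9]{Lafforgue} together with the characterization of the local Langlands correspondence for $GL_m$ by $\varepsilon$- and $L$-factors, for every place $v$ of $\mathbb F_{q_0}(x)$ the restriction of $\sigma$ to a decomposition group at $v$ corresponds locally to $\pi_v$. Consequently, at each finite place $\mathfrak p$, the Artin conductor exponent of $\sigma$ at $\mathfrak p$ --- which by the standard formula equals $\operatorname{drop}(\F,x_\mathfrak p) + \operatorname{swan}(\F,x_\mathfrak p)$ for $x_\mathfrak p$ any closed point over $\mathfrak p$ --- agrees with the conductor exponent of $\pi_\mathfrak p$, and by the Jacquet--Piatetski-Shapiro--Shalika theory of new vectors (of which the ``mirabolic mod $\mathfrak p^k$'' condition in the statement is a reformulation) this exponent is exactly $v_\mathfrak p(g)$. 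Summing over the finite places, and recalling that a place $\mathfrak p$ accounts for $\deg\mathfrak p$ geometric points all with the same drop and Swan conductor, we get
\[
\deg g = \sum_{\mathfrak p \ne \infty} v_\mathfrak p(g)\,\deg\mathfrak p = \sum_{\mathfrak p \ne \infty} \bigl(\operatorname{drop}(\F,x_\mathfrak p)+\operatorname{swan}(\F,x_\mathfrak p)\bigr)\deg\mathfrak p = \sum_{x \in \mathbb A^1(\Fq)} \bigl(\operatorname{drop}(\F,x)+\operatorname{swan}(\F,x)\bigr) = c_\F .
\]

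For the place $\infty$, the compatibility of the local Langlands correspondence for $GL_m$ with depths identifies the depth of $\pi_\infty$ with the largest break of $\sigma|_\infty$, i.e.\ with the highest slope of $\F$ at $\infty$; in the rank-one case this is the elementary identity between the conductor exponent of a ramified character of a local field and the depth of the corresponding character of its unit group, and the general case follows by applying it to the Jordan--H\"older constituents $V_i$ of the inertia representation at $\infty$. Thus if $d$ is greater than the depth of $\pi_\infty$, it is in particular at least the highest slope of $\F$ at $\infty$, so the equality clause of Lemma~\ref{rank formula} gives $N = m(d-1)+\deg g$. The only non-formal points here are bookkeeping ones --- that ``new of level $g$'' in the stated sense really computes the JPSS conductor, and that the depth normalization matches the slope normalization precisely, so that ``$d$ greater than the depth'' is (slightly more than) what is needed --- and there is no substantial geometric obstacle beyond Lemma~\ref{rank formula} itself.
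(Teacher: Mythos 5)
Your proposal takes essentially the same route as the paper: reduce to Lemma~\ref{rank formula}, then establish the two identities $c_\F = \deg g$ and $(\text{highest slope at }\infty) = (\text{depth of }\pi_\infty)$ via local--global compatibility for Lafforgue's correspondence. The paper compresses both steps into citations (Lafforgue's Theorem VII.3(i) for the conductor/epsilon-factor matching, and a reference for the depth--slope correspondence) where you instead sketch the underlying local Langlands compatibilities, but the structure of the argument is identical.
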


\begin{proof} In view of Lemma \ref{rank formula}, it suffices to show that $c_{\mathcal F}= \deg g$ and that the highest slope of $\mathcal F$ at $\infty$ is the depth of $\pi_\infty$. The first assertion follows from \cite[Theorem VII.3(i)]{Lafforgue} because the logarithmic level or conductor is the degree of the local epsilon factor.  The second assertion is \cite[Proposition 4.2]{llSLN}.\end{proof}

\section{Monodromy calculation}

We now calculate the monodromy of the sheaf $\mathcal G$, as well as the combine monodromy of multiple such sheaves. This is the analogue of \cite[Section 6]{wvqkr}.

\begin{lemma}\label{mmp}
Let $k$ be a natural number. Assume that $k \leq d$ and, if $\F$ is geometrically isomorphic to $\L_{\Lambda_0}$ for some $\Lambda_0: W_{d,q} \to \Ql^\times$ then $k<d$.

\[\lim_{q \to \infty} \frac{ 1}{ q^d}  \sum_{ \Lambda: W_{d,q}\to \Ql^\times} \frac{ \left| \tr (\Frob_q, H^1_c( \mathbb A^1_{\Fq}, \F \otimes \L_\Lambda)) \right|^{2k}}{q^{k(w+1)}} =   k!.\]

\end{lemma}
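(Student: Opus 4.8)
The plan is to connect the arithmetic sum over characters $\Lambda$ to the Diophantine moment computed in Lemma \ref{diophantinepart}, using the Grothendieck--Lefschetz trace formula to express $\tr(\Frob_q, H^1_c(\mathbb A^1_{\Fq}, \F \otimes \L_\Lambda))$ in terms of a point-count. Concretely, by Lemma \ref{cohomologyvanishing} (whose hypothesis is guaranteed for all but at most one $\Lambda$, and the excluded $\Lambda$ contributes $O(q^{-d})$ to the average and is harmless), all compactly supported cohomology except $H^1_c$ vanishes, so
\[
- \tr(\Frob_q, H^1_c(\mathbb A^1_{\Fq}, \F \otimes \L_\Lambda)) = \sum_{x \in \mathbb F_q} \tr(\Frob_{q,x}, \F \otimes \L_\Lambda) = \sum_{x \in \mathbb F_q} \tr(\Frob_{q,x}, \F) \, \Lambda(1 - tx),
\]
where the last equality uses Lemma \ref{norm formula}. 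Thus, setting $F_q(x) = \tr(\Frob_{q,x}, \F)$, the left-hand side of the claim equals, up to the negligible excluded characters,
\[
\lim_{q \to \infty} \frac{1}{q^d} \sum_{\Lambda: W_{d,q} \to \Ql^\times} \frac{ \left| \sum_{x \in \mathbb F_q} F_q(x) \Lambda(1-tx) \right|^{2k} }{q^{k(w+1)}}.
\]

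**The input hypotheses** of Lemma \ref{diophantinepart} must then be checked for this $F_q$. The bound $F_q(x) = O(q^{w/2})$ follows from $\iota$-purity of weight $w$ of $\F$ at the points where $\F$ is lisse, together with the bound on the drop at the finitely many (bounded in number, independent of $q$) bad points, where the local terms are still $O(q^{w/2})$ by the weight filtration on the stalk of a mixed sheaf of weight $\leq w$; more carefully one uses that $\F$ is a middle extension pure of weight $w$, so its stalks are mixed of weight $\leq w$, giving the required bound with an implied constant depending only on $\F$ (hence on $m$). The second-moment normalization $\lim_{q\to\infty} \frac{\sum_x |F_q(x)|^2}{q^{w+1}} = 1$ is exactly Lemma \ref{secondmoment}. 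Finally, the numerical constraint $k \leq d$ is part of the hypothesis here; I should double-check that when $\F$ is geometrically $\L_{\Lambda_0}$ the tensor $\F \otimes \L_\Lambda$ can fail the vanishing in Lemma \ref{cohomologyvanishing} precisely when $\Lambda = \Lambda_0^{-1}$ geometrically, which is a single character, and the hypothesis $k < d$ in that case is needed only so that we have the strict inequality allowing us to absorb that one character — but in fact one character out of $q^d$ is always negligible, so the role of $k<d$ here is more subtle and I should trace it: it is there because when $\F \cong \L_{\Lambda_0}$ the product $(1-tx_i)$ appearing with $\L_{\Lambda_0}$ shifts the effective degree, and the argument that the congruence mod $t^{d+1}$ forces equality of polynomials needs the combined degree to stay $\leq d$.

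**Having matched** the arithmetic sum to the Diophantine sum, the conclusion $k!$ is immediate from Lemma \ref{diophantinepart}. The bulk of the writing is the trace-formula identity, the reduction mod the finitely many exceptional $\Lambda$ (where $H^0_c$ or $H^2_c$ might not vanish, or where $j_! \neq j_*$ at $\infty$), and verifying the boundedness hypothesis. The main obstacle I anticipate is being careful about the exceptional $\Lambda$'s and the geometric-versus-arithmetic isomorphism subtlety in the degenerate case $\F \cong \L_{\Lambda_0}$: one must argue that removing $O(1)$ characters changes the normalized $2k$-th moment by $O(q^{-d}) \cdot O(q^{k(w+1)})/q^{k(w+1)} \cdot (\text{bound on } |\tr|^{2k})$, and since $|\tr(\Frob_q, H^1_c)| = O(q^{(w+1)/2} \cdot N)$ with $N$ bounded independently of $q$, this contributes $O(q^{-d}) \to 0$, so the exceptional characters are indeed harmless. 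I would state this cleanly as a preliminary reduction before invoking Lemma \ref{diophantinepart}.
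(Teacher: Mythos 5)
Your overall approach is the same as the paper's: apply Lemma~\ref{diophantinepart} to $F_q(x)=\tr(\Frob_{q,x},\F)$, then use Lefschetz and Lemma~\ref{cohomologyvanishing} to identify the character sum with the $H^1_c$ trace. The hypothesis checks (punctual purity giving $F_q(x)=O(q^{w/2})$, Lemma~\ref{secondmoment} for the second moment) are correct. But your treatment of the exceptional character is wrong, and it is exactly there that the assumption $k<d$ in the degenerate case does its work.

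The issue: you are comparing two sums, the ``Diophantine'' sum $\sum_\Lambda |\sum_x F_q(x)\Lambda(1-tx)|^{2k}/q^{k(w+1)}$, whose limit Lemma~\ref{diophantinepart} computes, and the ``cohomological'' sum $\sum_\Lambda |\tr(\Frob_q,H^1_c)|^{2k}/q^{k(w+1)}$, which is the object of the Lemma. They agree term-by-term except at the single $\Lambda=\Lambda_0^{-1}$ (when $\F\cong\L_{\Lambda_0}$). At that $\Lambda$ the cohomological term vanishes (since $H^1_c(\mathbb A^1,\Ql)=0$), but the Diophantine term does \emph{not} satisfy the $O(q^{k(w+1)})$ bound you invoke: $\sum_x F_q(x)\Lambda_0^{-1}(1-tx)=\alpha q$ with $|\alpha|=q^{w/2}$, a full sum with no cancellation, so its $2k$-th power is $q^{k(w+2)}$, and after normalizing by $q^{k(w+1)}$ and $q^d$ the discrepancy is $q^{k-d}$. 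That is $o(1)$ precisely when $k<d$, and $O(1)$ when $k=d$ — so your conclusion that the exceptional character is ``indeed harmless'' and contributes $O(q^{-d})$ is incorrect. You bounded the $H^1_c$ trace, but the relevant quantity is the point-count, which blows up. Your alternative explanation — that $k<d$ enters because the degree-$\leq d$ argument in Lemma~\ref{diophantinepart} breaks down — is also off: that lemma holds for $k\leq d$ regardless of whether $\F$ is geometrically a $\L_{\Lambda_0}$, since its hypotheses depend only on the boundedness and second-moment normalization of $F_q$. The discrepancy is entirely a mismatch between the point-count and the cohomology at the single exceptional $\Lambda$, not a failure of the moment lemma itself.
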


\begin{proof}  First we show that

\[\lim_{q \to \infty} \frac{ 1}{ q^d}  \sum_{ \Lambda: W_{d,q}\to \Ql^\times} \frac{ \left|  \sum_{x \in \mathbb F_q} \tr(\Frob_q, \F, x) \Lambda(1- tx) \right|^{2k}}{q^{k(w+1)}} =   k!.\]

This follows from Lemma \ref{diophantinepart} once we check the conditions. The first condition, that $\tr(\Frob_q, \F, x) = O(q^{w/2})$, is satisfied as $\F$, being pure of weight $w$, is pointwise mixed of weight $\leq w$, i.e. all eigenvalues of Frobenius on the stalk at a point are at most $q^{w/2}$ and thus the trace at a point is at most $m q^{w/2}$. The second condition is Lemma \ref{secondmoment}. 

For each $\Lambda$, if $\F$ is not geometrically isomorphic to $\L_{\Lambda^{-1}}$, then we have by the Lefschetz formula and Lemma \ref{cohomologyvanishing}

\[ \sum_{x \in \mathbb F_q} \tr(\Frob_q, \F, x) \Lambda(1- tx) = \sum_{x \in \mathbb F_q} \tr(\Frob_q, \F \otimes \L_\Lambda, x) \]\[= \sum_{i=0}^2 (-1)^i \tr(\Frob_q, H^i_c(\mathbb A^1_{\Fq}, \F \otimes \L_\Lambda))= -\tr(\Frob_q, H^i_c(\mathbb A^1_{\Fq}, \F \otimes \L_\Lambda)) \]  

In particular, if $\F$ is not geometrically isomorphic to $\L_{\Lambda_0}$ for any $\Lambda_0 : W_{d,q} \to \Ql^\times$, then we are done as $\left| \sum_{x \in \mathbb F_q} \tr(\Frob_q, \F, x) \Lambda(1- tx)\right| = \left| \tr(\Frob_q, H^i_c(\mathbb A^1_{\Fq}, \F \otimes \L_\Lambda)) \right|$ for all $\Lambda$.

If $\F$ is geometrically isomorphic to $\L_{\Lambda_0}$, we need an additional argument. The action of $\Frob_q$ on $\F$ at $x$ is equal to $\Lambda(1-tx)$ times a constant $\alpha$ satisfying $|\alpha| =q^{w/2}$. Then  $\left| \sum_{x \in \mathbb F_q} \tr(\Frob_q, \F, x) \Lambda(1- tx)\right| = \left| \tr(\Frob_q, H^i_c(\mathbb A^1_{\Fq}, \F \otimes \L_\Lambda)) \right|$ for all $\Lambda$ except $\Lambda=\Lambda_0^{-1}$. In this case, $\sum_{x \in \mathbb F_q} \tr(\Frob_q, \F, x) \Lambda_0^{-1} (1- tx) = \sum_{x \in \mathbb F_q} \alpha = \alpha q$, but $H^1_c( \mathbb A^1_{\Fq}, \F \otimes \L_\Lambda)= H^1_c(\mathbb A^1_{\Fq}, \Ql)=0$. Hence

\[  \sum_{ \Lambda: W_{d,q}\to \Ql^\times} \frac{ \left| \tr (\Frob_q, H^1_c( \mathbb A^1_{\Fq}, \F \otimes \L_\Lambda)) \right|^{2k}}{q^{k(w+1)}} \]

\[= \sum_{ \Lambda: W_{d,q} \to \Ql^\times} \frac{ \left| \sum_{x \in \mathbb F_q} \tr(\Frob_q, \F, x) \Lambda(1- tx) \right|^{2k}}{q^{k(w+1)}} - \frac{ \left| \alpha q\right|^{2k}}{q^{k(w+1)}}.\]

We know the first term, divided by $q^d$, converges to $k!$ as $q$ goes to $\infty$. It remains to evaluate the second term. We have 
\[\frac{ \left| \alpha q\right|^{2k}}{q^{k(w+1)}}= \frac{ q^{2k(1+w/2)}}{q^{k(w+1)}}= q^k\]
and hence, divided by $q^d$, it converges to $0$ as long as $k<d$.

\end{proof}

Let $G_{geom} \subseteq GL_N$ be the geometric monodromy group of $\mathcal G$ on $U$ with its $N$-dimensional standard representation $V$. Because $\G$ is pure of weight $w+1$ on $U$, it is geometrically semisimple on $U$ \cite[Corollary 3.4.12]{weilii}, so $G_{geom}$ is reductive.

\begin{prop}\label{moments} Let $k$ be a natural number. Assume that $k \leq d$ and, if $\F$ is geometrically isomorphic to $\L_{\Lambda_0}$ for some $\Lambda_0: W_{d,q} \to \Ql^\times$ then $k<d$.
Then the dimension of the $G_{geom}$-invariants of $V^{\otimes k} \otimes V^{\vee \otimes k}$  is $k!$. \end{prop}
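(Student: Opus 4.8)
The plan is to relate the dimension of the $G_{geom}$-invariants in $V^{\otimes k}\otimes V^{\vee\otimes k}$ to the limiting $2k$-th moment computed in Lemma \ref{mmp}, using the equidistribution of Frobenius conjugacy classes. Concretely, the dimension of the invariant space equals
\[
\dim \left( V^{\otimes k}\otimes V^{\vee\otimes k} \right)^{G_{geom}} = \int_{G_{geom}} \left| \tr(g, V) \right|^{2k} \, dg
\]
with respect to Haar measure on a maximal compact of $G_{geom}$, by Schur orthogonality / Weyl's unitarian trick, since taking invariants of $V^{\otimes k}\otimes (V^{\vee})^{\otimes k}$ is the same as the $L^2$-pairing of the character $\tr(\cdot,V)^k$ with itself. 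So the whole point is to show that this Haar-measure integral equals $k!$, and the bridge is Deligne's equidistribution theorem.

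First I would invoke Deligne's equidistribution theorem (as in \cite[Chapter 3]{rls}): since $\mathcal G$ is lisse and pure of weight $w+1$ on $U$ (Lemmas \ref{swan-lisse} and \ref{shweight}), the conjugacy classes $\Frob_q$ acting on the stalks, normalized by $q^{(w+1)/2}$, become equidistributed as $q\to\infty$ (ranging over $\overline{\mathbb F}_q$-points, or here $\mathbb F_q$-points of $U$) in the space of conjugacy classes of a maximal compact subgroup $K$ of $G_{geom}$, against the direct image of Haar measure. Applying this to the continuous central function $g\mapsto |\tr(g,V)|^{2k}$ gives
\[
\lim_{q\to\infty} \frac{1}{|U(\mathbb F_q)|} \sum_{y\in U(\mathbb F_q)} \frac{\left| \tr(\Frob_q, \mathcal G_y) \right|^{2k}}{q^{k(w+1)}} = \int_{K} \left| \tr(g,V) \right|^{2k}\, dg.
\]
Here I would need to know that the trace function is bounded independently of $q$ on $U(\mathbb F_q)$, which follows from purity of weight $w+1$ of $\mathcal G$ on $U$ together with the fixed rank $N$.

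Next I would compare this average over $U(\mathbb F_q)$ with the average over all of $\mathbb A^d(\mathbb F_q)$, i.e. over all characters $\Lambda: W_{d,q}\to\Ql^\times$ (via Lemma \ref{universal}), which is what Lemma \ref{mmp} controls. The complement $\mathbb A^d\setminus U$ is cut out by $a_d\in T$ with $|T|\le m$ fixed, so it has $O(q^{d-1})$ points; combined with the uniform bound $|\tr(\Frob_q,\mathcal G_{y_\Lambda})|\le N q^{(w+1)/2}$ valid for every $\Lambda$ (Lemma \ref{cohweight} gives weight $\le w+1$, hence the bound, even off $U$), and using Lemma \ref{shconstruction} to identify $\mathcal G_{y_\Lambda}$ with $H^1_c(\mathbb A^1_{\Fq},\F\otimes\L_\Lambda)$, the contribution of $\mathbb A^d\setminus U$ to the normalized $2k$-th moment is $O(q^{-1})=o(1)$. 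Therefore the limit of the average over $U(\mathbb F_q)$ equals the limit of the average over all $\Lambda$, which by Lemma \ref{mmp} is exactly $k!$ (the hypothesis $k\le d$, and $k<d$ in the exceptional case $\F\cong\L_{\Lambda_0}$, is precisely what Lemma \ref{mmp} requires). Combining the two displayed equations yields $\int_K |\tr(g,V)|^{2k}\,dg = k!$, hence $\dim(V^{\otimes k}\otimes V^{\vee\otimes k})^{G_{geom}}=k!$.

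The main obstacle is the careful bookkeeping in the second step: one must make sure the equidistribution statement is applied to the right measure space and that the trace bound is genuinely uniform in $q$ on $U$ — this is where purity on $U$ (Lemma \ref{shweight}), rather than mere mixedness, is essential — and one must handle the $O(q^{d-1})$ bad points cleanly so that they do not perturb the limit. The passage from "integral of $|\tr|^{2k}$ against Haar measure" to "dimension of invariants" is standard representation theory (it is the statement that $V^{\otimes k}$ and $V^{\otimes k}$ have $\dim\mathrm{Hom}_{G_{geom}}(V^{\otimes k},V^{\otimes k})$ copies matching), but it is worth stating explicitly that $G_{geom}$ reductive (already noted, via \cite[Corollary 3.4.12]{weilii}) allows replacing the algebraic group by a compact form without changing dimensions of invariant subspaces.
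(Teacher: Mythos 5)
Your reduction of the invariant dimension to the second half of the argument — passing from $U(\mathbb F_q)$ to all of $\mathbb A^d(\mathbb F_q)$ using the bound $|\mathbb A^d\setminus U|=O(q^{d-1})$ together with the weight-$\le w+1$ trace bound from Lemma \ref{cohweight}, then invoking Lemma \ref{mmp} — matches the paper's proof and is carried out correctly. The gap is in the first half: you invoke Deligne's equidistribution theorem as a black box to assert that the normalized Frobenius conjugacy classes on $U(\mathbb F_q)$ equidistribute against Haar measure on $K^\natural$ for $K$ a maximal compact of $G_{geom}$, and that the $q\to\infty$ limit of the averaged $|\tr|^{2k}$ therefore equals $\int_K|\tr(g,V)|^{2k}\,dg=\dim\bigl(V^{\otimes k}\otimes V^{\vee\otimes k}\bigr)^{G_{geom}}$. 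But Deligne's equidistribution theorem in that form has hypotheses that are not available here and that would in fact be circular to assume. The standard statements (Katz--Sarnak, Chapter 9) require $G_{geom}$ to be semisimple, or at least require control of $G_{arith}/G_{geom}$, in order to know that the normalized Frobenius classes land in the conjugacy classes of a compact form of $G_{geom}$ (rather than a larger group or a nontrivial coset) and that the limiting measure is Haar measure on that compact form. At this point in the paper nothing of the sort is known about $G_{geom}$: it is only known to be reductive via purity of $\mathcal G$ on $U$, and determining $G_{geom}$ is precisely what Proposition \ref{moments} and Corollary \ref{monodromy} are building toward. Without such hypotheses, Deligne's theorem would at best compute an average of $|\tr|^{2k}$ over a compact form of the \emph{arithmetic} monodromy group (or over cosets thereof), which controls $\dim(\cdot)^{G_{arith}}$, a priori smaller than $\dim(\cdot)^{G_{geom}}$; that only gives an inequality, not the equality claimed.

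The paper avoids this circularity by working directly with the cohomological description of the invariants: $\bigl(V^{\otimes k}\otimes V^{\vee\otimes k}\bigr)^{G_{geom}}$ is realized as $H^{2d}_c\bigl(U_{\overline{\mathbb F}_q},\mathcal G^{\otimes k}\otimes(\mathcal G^\vee)^{\otimes k}\bigr)$, which is pure of weight $2d$ because $\mathcal G\otimes\mathcal G^\vee$ is pure of weight $0$. Purity gives $\dim = \limsup_q q^{-d}\bigl|\tr\bigl(\Frob_q,H^{2d}_c\bigr)\bigr|$ — this is a Kronecker--Weyl style step, using that the unitary Frobenius eigenvalues on the top cohomology can be made simultaneously close to $1$ along a subsequence, without knowing they are all equal to $1$ — and then the Lefschetz trace formula together with the weight bounds on $H^i_c$ for $i<2d$ turns the $\limsup$ into a pointwise sum over $U(\mathbb F_q)$. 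This is the proof of (the relevant direction of) Deligne equidistribution done by hand for this specific sheaf, and it makes no assumption about $G_{geom}$ beyond reductivity. To fix your proposal you would need to replace the black-box invocation of Deligne's theorem with this Lefschetz--purity--$\limsup$ argument, or alternatively supply a proof that the version of Deligne equidistribution you want actually applies, which would require input you don't yet have.
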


\begin{proof} This invariant subspace is
\[H^{2d}_c \left(U_{\overline{\mathbb F}_q}, \mathcal G^{\otimes k} \otimes \left(\mathcal G^{\vee}\right)^{\otimes k} \right)\]
and is pure of weight $2d$, because $\mathcal G \otimes \mathcal G^{\vee}$ is pure of weight $0$. So the dimension of this invariant subspace is equal to 

\[ \lim \sup_ {q \to \infty} \frac{1}{q^{d}} \left| \tr\left(\Frob_q, H^{2d}_c\left(U_{\overline{\mathbb F}_q}, \mathcal G^{\otimes k} \otimes \left( \mathcal G^{\vee}\right)^{\otimes k} \right)\right)\right| \]

\[= \lim \sup_ {q \to \infty} \frac{1}{q^{d}} \left|\sum_i (-1)^i\tr\left(\Frob_q, H^{i}_c\left(U_{\overline{\mathbb F}_q}, \mathcal G^{\otimes k} \otimes \left( \mathcal G^{\vee}\right)^{\otimes k} \right)\right)\right| \]

\[= \lim \sup_ {q \to \infty} \frac{1}{q^{d}} \left|\sum_{x \in U(\mathbb F_q)} \tr\left(\Frob_q, \mathcal G^{\otimes k}_x \otimes \left( \mathcal G^{\vee}_x\right)^{\otimes k} \right)\right| \]

\[= \lim \sup_ {q \to \infty} \frac{1}{q^{d}} \left|\sum_{x \in U(\mathbb F_q)} \tr(\Frob_q, \mathcal G_x)^k \tr(\Frob_q,\mathcal G^\vee_x)^k \right|\]

(Because $\mathcal G$ is pure of weight $w+1$ on $U$ (Lemma \ref{shweight}),  $\tr(\Frob_q, \mathcal G^\vee)= \overline{\tr(\Frob_q, \mathcal G)}/q^{w+1}$)

\[= \lim \sup_ {q \to \infty} \frac{1}{q^{d+k(w+1)}} \left|\sum_{x \in U(\mathbb F_q)} \left|\tr(\Frob_q,  \mathcal G_x )\right|^{2k}\right| \]

(If we extend the sum from $U$ to $\mathbb A^d$, by Lang-Weil or even simpler estimates we add $O(q^{d-1})$ terms. Because $\G$ is mixed of weight $\leq w+1$ (Lemma \ref{shweight}), each of those terms has size at most $(O (q^{(w+1)/2}))^{2k}  = O(q^{k(w+1)})$, so the total contribution $\frac{1}{q^{d+k(w+1)}}  O( q^{ d-1+ k(w+1)})$ is $o(1)$ and does not affect the limit.)

\[= \lim \sup_ {q \to \infty} \frac{1}{q^{d+k(w+1)}} \left|\sum_{x \in \mathbb A^d(\mathbb F_q)} \left|\tr(\Frob_q,  \mathcal G_x )\right|^{2k}\right| \]

\[=\lim_{q \to \infty} \frac{ 1}{ q^d}  \sum_{ \Lambda: W_{d,q}\to \Ql^\times} \frac{ \left| \tr (\Frob_q, H^1_c( \mathbb A^1_{\Fq}, \F \otimes \L_\Lambda)) \right|^{2k}}{q^{k(w+1)}} =   k!\] by Lemma \ref{mmp}.\end{proof}

\begin{cor}\label{monodromy} If $d\geq 4$ then $G_{geom}$ contains $SL_N$. \end{cor}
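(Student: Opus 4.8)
The plan is to combine the moment identities of Proposition~\ref{moments} with Larsen's alternative, reducing the statement to excluding a finite geometric monodromy group, which is then disposed of by the Guralnick--Tiep classification.

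Write $M_{2k}=\dim\bigl(V^{\otimes k}\otimes V^{\vee\otimes k}\bigr)^{G_{geom}}$. Since $d\ge4$, Proposition~\ref{moments} applies for $k=1,2,3$ (in the exceptional case, where $\mathcal F$ is geometrically isomorphic to some $\mathcal L_{\Lambda_0}$, it requires $k<d$, which still permits $k=1,2,3$ since $d\ge4$), so
\[
M_2=1,\qquad M_4=2,\qquad M_6=6 .
\]
From $M_2=1$ and Schur's lemma, $V$ is an irreducible representation of $G_{geom}$, and $G_{geom}$ is reductive since $\mathcal G$ is geometrically semisimple on $U$. By Lemma~\ref{rank formula} and $d\ge4$, $N=\dim V\ge m(d-1)+c_{\mathcal F}\ge3$.

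Next I would apply Larsen's alternative to the reductive group $G_{geom}$ with its faithful, irreducible, $N$-dimensional representation $V$ ($N\ge3$) satisfying $M_4=2$: either $G_{geom}^{\circ}$ acts on $V$ as $SL_N$, $SO_N$, or $Sp_N$ in the standard representation, or the image of $G_{geom}$ in $PGL_N$ is finite. The orthogonal and symplectic possibilities are excluded directly: there $\bigl(V^{\otimes2}\otimes V^{\vee\otimes2}\bigr)^{G_{geom}^{\circ}}$ is $3$-dimensional, spanned by the identity, the transposition, and the contraction through the invariant bilinear form, and all three are fixed by every element of $G_{geom}$ --- it normalizes $G_{geom}^{\circ}$ and hence preserves the form up to a scalar, the two scalars cancelling in the contraction --- so $M_4$ would equal $3$, a contradiction. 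Hence either $G_{geom}^{\circ}=SL_N$, so that $SL_N\subseteq G_{geom}$ and we are done, or the image of $G_{geom}$ in $PGL_N$ is finite.

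It remains to rule out the last case, which is the heart of the matter. If the image of $G_{geom}$ in $PGL_N$ is finite, then $G_{geom}$ is finite modulo its scalars, and since the scalars act trivially on each $V^{\otimes k}\otimes V^{\vee\otimes k}$, a standard reduction (replace $G_{geom}$ by $G_{geom}\cap SL_N$, or keep it if it is already finite) puts us in the situation where $G_{geom}$ is a finite, irreducible subgroup of $GL_N$ with $N\ge3$, $M_4=2$, and $M_6=6$; note that $M_4=2$ already forces the representation to be non-self-dual, because an irreducible self-dual representation of dimension $\ge3$ has the trivial representation inside exactly one of $\operatorname{Sym}^2$, $\wedge^2$, with the complementary summand a further nonzero submodule of $V\otimes V^{\vee}$, whence $M_4\ge3$. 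By \cite[Theorem~1.4]{guralnicktiep}, whose proof rests on the classification of finite simple groups, such finite irreducible linear groups with $M_4=2$ lie on an explicit short list, and for every entry on that list one computes $M_6\ne6$; already the smallest example, the $3$-dimensional representation of $\mathrm{PSL}_2(\mathbb F_7)$, has $M_6=7$. This contradicts $M_6=6$. Therefore the image of $G_{geom}$ in $PGL_N$ is not finite, so $G_{geom}^{\circ}=SL_N$ and $SL_N\subseteq G_{geom}$.

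I expect the main obstacle to be this final step: pinning down, via \cite[Theorem~1.4]{guralnicktiep}, the list of finite irreducible linear groups compatible with $M_4=2$, and checking that the extra constraint $M_6=6$ (together with $N\ge3$ and non-self-duality) eliminates each of them. The remainder of the argument is formal given Proposition~\ref{moments}.
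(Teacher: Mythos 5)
Your proposal follows the same broad strategy as the paper---moment identities from Proposition~\ref{moments} plus the Guralnick--Tiep classification---but it diverges in the execution, and the divergence opens a gap that you yourself flag. The paper uses $k=4$: Proposition~\ref{moments} yields $\dim\bigl(V^{\otimes 4}\otimes V^{\vee\otimes 4}\bigr)^{G_{geom}}=4!=24$, and \cite[Theorem~1.4]{guralnicktiep} is cited precisely as the statement that this eighth-moment condition together with $N\ge 5$ forces $G_{geom}\supseteq SL_N$ (with \cite[Theorem~2.12]{guralnicktiep} covering $N=3,4$). No separate pass through Larsen's alternative and then the finite case is needed; the cited theorem is engineered to do both steps at once. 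The price the paper pays is that $k=4$ is only available when $\mathcal F\not\cong\mathcal L_{\Lambda_0}$ (Proposition~\ref{moments} requires $k<d$ in the exceptional case, and $d$ may be exactly $4$), so the paper disposes of the exceptional case separately by a twisting automorphism that reduces it to Katz's $\mathcal F=\overline{\mathbb Q}_\ell$ result \cite[Theorem~5.1]{wvqkr}. Your choice of $k\le 3$ neatly avoids that case split---a genuine small advantage---but only if the rest of your argument closes.

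It doesn't quite close. After Larsen's alternative and the (correct) exclusion of $SO_N,\,Sp_N$ via $M_4=2$, you are left with the possibility that the image of $G_{geom}$ in $PGL_N$ is finite, and you propose to eliminate it by asserting that finite irreducible linear groups with $M_4=2$ form an explicit short list, each entry of which has $M_6\ne 6$. Two problems. First, \cite[Theorem~1.4]{guralnicktiep} is not that list; as the paper uses it, it is the $M_8=24\Rightarrow SL_N$ statement, so you would need to locate the correct classification result (Guralnick--Tiep do have results in this direction, but the citation as given is wrong). Second, and more seriously, the claim that every entry on the list has $M_6\ne 6$ is precisely the substance of the corollary in your rendering, and you only check one example ($\mathrm{PSL}_2(\mathbb F_7)$). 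The list includes infinite families coming from Weil representations of symplectic and unitary groups over finite fields, so ``one computes $M_6\ne 6$'' is a nontrivial verification, not a formality. Until that computation is actually carried out for every case, this is a real gap rather than a detail. If you want to keep the $k\le 3$ route, the cleanest fix is to find (or prove) a statement of the form ``$M_4=2$, $M_6=6$, $N\ge 3$, $V$ not self-dual $\Rightarrow$ no finite $G$''; otherwise, switch to $k=4$ as the paper does and handle $\mathcal F\cong\mathcal L_{\Lambda_0}$ by the twisting argument.
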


\begin{proof} First assume that $\mathcal F$ is not isomorphic to $\mathcal L_{\Lambda_0}$ for some $\Lambda_0: W_{d,q} \to \Ql^\times$ 

By Proposition \ref{moments}, the dimension of the $G_{geom}$ invariants of  $V^{\otimes 4} \otimes V^{\vee \otimes 4}$ is $4!$. Guralnick and Tiep's solution of Larsen's conjecture \cite[Theorem 1.4]{guralnicktiep}, which states that this condition, together with the assumption $N\geq 5$, implies that $G_{geom}$ contains $SL_N$. However, the same thing is proved in the case $N=3$ and $N=3$ in \cite[Theorem 2.12]{guralnicktiep}, noting that in characteristic zero the only exceptional case that applies is case (ii), which requires $N=2$. By Lemma \ref{rank formula} $N \geq (d-1) \geq 3$ so this assumption is satisfied.

In the case $\mathcal F$ is isomorphic to $\mathcal L_{\Lambda_0}$, twisting by $\mathcal F \otimes \mathcal L_{\Lambda} = \mathcal L_{\Lambda \Lambda_0}$ and $\Lambda \mapsto \Lambda\Lambda_0$ gives an automorphism of the space $\mathbb A^d$ of characters. Pulling back Katz's sheaf $L_{univ}$ under that automorphism gives a sheaf isomorphic to $\mathcal G$, so the monodromy groups are the same, but the monodromy group of $\mathcal G$ was already proven to contain $SL_N$ by \cite[Theorem 5.1]{wvqkr}
  \end{proof}

Let $\F_1$ and $\F_2$ be two geometrically irreducible middle extension sheaves on $\mathbb A^1_{\mathbb F_q}$, pure of weights $w_1$ and $w_2$ respectively. Assume that $\F_1$ and $\F_2$ are not geometrically isomorphic. 

\begin{lemma}\label{mmtwo} If $d \geq 2$ then  
\[\lim_{q \to \infty} \frac{ 1}{ q^d}  \sum_{ \Lambda: W_{d,q}\to \Ql^\times} \frac{ \left| \tr (\Frob_q, H^1_c( \mathbb A^1_{\Fq}, \F_1 \otimes \L_\Lambda)) \right|^2  \left| \tr (\Frob_q, H^1_c( \mathbb A^1_{\Fq}, \F_2 \otimes \L_\Lambda)) \right|^2  }{q^{w_1 +w_2+2}} =   1.\]

\end{lemma}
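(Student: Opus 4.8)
The plan is to mimic the proof of Lemma \ref{mmp} but use the ``independence'' Diophantine input, Lemma \ref{indio}, in place of Lemma \ref{diophantinepart}. First I would reduce the cohomological sum to a sum over characters weighted by character sums of the traces of Frobenius on $\F_1$ and $\F_2$. Concretely, for each $\Lambda$, provided $\F_i$ is not geometrically isomorphic to $\L_{\Lambda^{-1}}$, the Lefschetz trace formula together with Lemma \ref{cohomologyvanishing} gives
\[ \tr(\Frob_q, H^1_c(\mathbb A^1_{\Fq}, \F_i \otimes \L_\Lambda)) = - \sum_{x \in \mathbb F_q} \tr(\Frob_q, \F_i, x) \Lambda(1-tx), \]
so the quantity to be computed becomes, up to the finitely many bad $\Lambda$,
\[ \lim_{q \to \infty} \frac{1}{q^d} \sum_{\Lambda} \frac{ \left| \sum_{x} \tr(\Frob_q, \F_1, x)\Lambda(1-tx)\right|^2 \left| \sum_{x} \tr(\Frob_q, \F_2, x)\Lambda(1-tx)\right|^2 }{q^{w_1+w_2+2}}. \]

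Next I would verify the hypotheses of Lemma \ref{indio} for the sequences $F_{i,q}(x) = \tr(\Frob_q, \F_i, x)$. The bound $F_{i,q}(x) = O(q^{w_i/2})$ follows from pointwise purity (all Frobenius eigenvalues on the stalk have absolute value $\leq q^{w_i/2}$, and the rank is bounded), exactly as in Lemma \ref{mmp}. The normalization $\lim_q \frac{\sum_x |F_{i,q}(x)|^2}{q^{w_i+1}} = 1$ is Lemma \ref{secondmoment}. The genuinely new condition to check is the orthogonality hypothesis $\lim_q \frac{\sum_x F_{1,q}(x)\overline{F_{2,q}(x)}}{q^{(w_1+w_2)/2+1}} = 0$; this should again follow from \cite[Variant Theorem 7.1.2]{rls} (the same source as Lemma \ref{secondmoment}), since $\F_1$ and $\F_2$ are geometrically irreducible and not geometrically isomorphic, so the relevant inner product picks out $\operatorname{Hom}_{\overline{\mathbb F}_q}(\F_2, \F_1)$, which vanishes, forcing the normalized sum to zero; alternatively one writes $\sum_x F_{1,q}(x)\overline{F_{2,q}(x)}$ as (up to a power of $q$ and the middle-extension correction at $\infty$) a trace of Frobenius on $H^\bullet_c(\mathbb A^1, \F_1 \otimes \F_2^\vee)$ and invokes purity and the vanishing of $H^2_c$ coming from geometric non-isomorphism.

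Finally I would handle the finitely many exceptional characters. As in Lemma \ref{mmp}, if $\F_1$ or $\F_2$ is geometrically $\L_{\Lambda_0}$ for some character of $W_{d,q}$, the identity relating the cohomological trace to the character sum fails for the single character $\Lambda = \Lambda_0^{-1}$, where instead $\sum_x \tr(\Frob_q,\F_i,x)\Lambda_0^{-1}(1-tx) = \alpha q$ with $|\alpha| = q^{w_i/2}$ while $H^1_c = 0$; the resulting discrepancy in the fourth-moment-type sum is $O(q^{w_i+1+w_j})$ divided by $q^d \cdot q^{w_1+w_2+2}$, hence $o(1)$ once $d \geq 1$, so it does not affect the limit (the other sheaf contributes its generic $O(q^{w_j+1})$ second moment at that character). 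The main obstacle I anticipate is pinning down the orthogonality input precisely — confirming that the cited variant of \cite{rls} delivers the off-diagonal vanishing in the required normalized form, or else supplying the short cohomological argument — since everything else is a direct parallel of Lemma \ref{mmp} with Lemma \ref{indio} substituted for Lemma \ref{diophantinepart}.
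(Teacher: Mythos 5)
Your proposal matches the paper's proof essentially step for step: convert the cohomological traces to character sums via the Lefschetz formula together with Lemma \ref{cohomologyvanishing}, verify the three hypotheses of Lemma \ref{indio} for $F_{i,q}(x)=\tr(\Frob_q,\F_i,x)$, and separately control the finitely many exceptional characters $\Lambda$ for which some $\F_i$ is geometrically isomorphic to $\L_{\Lambda^{-1}}$.

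Two small points. For the orthogonality hypothesis the paper does not reuse \cite{rls}; it cites \cite[Theorem 1.7.2(2)]{mmp}, applied after twisting by $q^{(w_i+1)/2}$ so that $\F_1[1]$ and $\F_2[1]$ become perverse, pure of weight $0$, and with no common constituents. Your alternative cohomological sketch is in the same spirit, but note that since the $\F_i$ are middle extensions rather than lisse, the inner product is not literally a Frobenius trace on $H^\bullet_c(\mathbb A^1,\F_1\otimes\F_2^\vee)$; this is exactly the bookkeeping that the perverse-sheaf statement is designed to absorb. Second, your bound on the exceptional contribution is off by $q^2$: at the bad character one has $\bigl|\sum_x \tr(\Frob_q,\F_i,x)\Lambda(1-tx)\bigr|^2=|\alpha q|^2=q^{w_i+2}$, and the other factor is generically $O(q^{w_j+1})$, so the term is $O(q^{w_i+w_j+3})/(q^d\, q^{w_1+w_2+2})=O(q^{1-d})$. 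Thus the hypothesis $d\geq 2$ is genuinely needed for this step, not merely $d\geq 1$; fortunately the lemma assumes exactly that, so your conclusion still stands.
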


\begin{proof} We split into cases depending on whether $\F_1$ or $\F_2$ is geometrically isomorphic to $\L_\Lambda$ for any $\Lambda: W_{d,q}\to \Ql^\times$.

In the case that neither $\F_1$ nor $\F_2$ is geometrically isomorphic to such a sheaf, by the Lefschetz formula and Lemma \ref{cohomologyvanishing}

\[ \sum_{x \in \mathbb F_q} \tr(\Frob_q, \F_i, x) \Lambda(1- tx) = \sum_{x \in \mathbb F_q} \tr(\Frob_q, \F_i \otimes \L_\Lambda, x) \]\[= \sum_{i=0}^2 (-1)^i \tr(\Frob_q, H^i_c(\mathbb A^1_{\Fq}, \F_i \otimes \L_\Lambda))= -\tr(\Frob_q, H^i_c(\mathbb A^1_{\Fq}, \F_i \otimes \L_\Lambda)) \]  so this follows from Lemma \ref{indio} once we check the conditions. The first condition, that $\tr(\Frob_q, \F_i, x) = O(q^{w/2})$, is satisfied as $\F$, being pure of weight $w$, is punctually mixed of weight $\leq w$, i.e. all eigenvalues of Frobenius on the stalk at a point are at most $q^{w/2}$ and thus the trace at a point is at most $m q^{w/2}$. The second condition is Lemma \ref{secondmoment}. The third condition follows from \cite[Theorem 1.7.2(2)]{mmp} as after twisting by $q^{(w_1+1)/2}$ and $q^{(w_2+1)/2}$ respectively, $\F_1[1]$ and $\F_2[1]$ are perverse and pure of weight $0$ with no common constituents.

In the case where at least one of the $\F_i$ is geometrically isomorphic to $\L_{\Lambda_i}$, the equation \[ \sum_{x \in \mathbb F_q} \tr(\Frob_q, \F_i, x) \Lambda(1- tx) = -\tr(\Frob_q, H^i_c(\mathbb A^1_{\Fq}, \F_i \otimes \L_\Lambda)) \] holds for all but one or two values of $\Lambda$. So the same result will be correct as long as the contribution from these exceptional values converges to $0$ as $q$ goes to $\infty$.  These values occur when $\F_i$ is geometrically isomorphic to $\L_{\Lambda}^{-1}$, so the right side of this equation vanishes. Hence the contribution of each exceptional term is 

\[ \frac{1}{q^d} { \left| \sum_{x \in \mathbb F_q} \tr(\Frob_q, \F_1,x) \Lambda(1-tx) \right|^2  \left| \sum_{x \in \mathbb F_q}  \tr (\Frob_q, \tr(\Frob_q, \F_2,x) \Lambda(1-tx)  \right|^2  }{q^{w_1 +w_2+2}} \]

Assume without loss of generality that $\F_1$ is geometrically isomorphic to $\L_\Lambda^{-1}$. Then as $\F_1$ is not geometrically isomorphic to $\F_2$,  $\F_2$ is not geometrically isomorphic to $\L_\Lambda$, \[ \left| \sum_{x \in \mathbb F_q}  \tr (\Frob_q, \tr(\Frob_q, \F_2,x) \Lambda(1-tx)  \right|^2 = O(q^{w_2+1})\] by Lemma \ref{cohweight} and Lemma \ref{cohomologyvanishing}. Then as $\tr(\Frob_q,\F_q,x) = O(q^{w_1/2})$, \[ \left| \sum_{x \in \mathbb F_q} \tr(\Frob_q, \F_1,x) \Lambda(1-tx) \right|^2 =O(q^{w_1+2})\] and so the contribution is $O( q^{1-d})$. As $d\geq 2$, this converges to $0$, as desired.
\end{proof}

Again let $\F_1$ and $\F_2$ be two middle extension sheaves on $\mathbb A^1_{\mathbb F_q}$, pure of weights $w_1$ and $w_2$ respectively. Form the corresponding sheaves $\G_1$ and $\G_2$, as in Lemma \ref{shweight}. Let $U$ be an open subset on which both sheaves are lisse. Let $N_1$ and $N_2$ be the ranks of $\G_1$ and $\G_2$, and let $G_{geom}$ be the geometric monodromy group of $\G_1+\G_2$, acting on standard representations $V_1$ and $V_2$ corresponding to $\G_1$ and $\G_2$.

\begin{lemma}\label{indmoments}  Assume that neither $\F_1$ nor $\F_2$ is isomorphic to $\L_{\Lambda}$ for any character $\Lambda:W_{d,q}\to \Ql^\times$.  Assume that $d \geq 2$.

The dimension of the $G_{geom}$-invariants of $V_1 \otimes V_1^{\vee} \otimes V_2 \otimes V_2^\vee$  is $1$. \end{lemma}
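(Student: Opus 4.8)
The plan is to mimic the proof of Proposition \ref{moments}, but now tracking two sheaves simultaneously. First I would identify the dimension of the $G_{geom}$-invariants of $V_1 \otimes V_1^\vee \otimes V_2 \otimes V_2^\vee$ with the dimension of $H^{2d}_c(U_{\overline{\mathbb F}_q}, \G_1 \otimes \G_1^\vee \otimes \G_2 \otimes \G_2^\vee)$, using that this sheaf is pure of weight $0$ on $U$ (since each $\G_i$ is pure of weight $w_i+1$ on $U$ by Lemma \ref{shweight}) and that $H^{2d}_c$ computes the coinvariants, which for a pure sheaf are dual to the invariants. Purity of $H^{2d}_c$ of weight $2d$ then lets me extract its dimension as $\lim\sup_{q\to\infty} q^{-d} |\tr(\Frob_q, H^{2d}_c)|$.

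Next I would run the Lefschetz trace formula in reverse: express $\tr(\Frob_q, H^{2d}_c)$ as $\sum_i (-1)^i \tr(\Frob_q, H^i_c)$ up to an error of $o(q^d)$, the lower cohomology being of weight $<2d$ and of bounded dimension, hence negligible after dividing by $q^d$ and taking $\lim\sup$. This turns the sum into $\sum_{x \in U(\mathbb F_q)} \tr(\Frob_q, \G_{1,x})\tr(\Frob_q,\G_{1,x}^\vee)\tr(\Frob_q,\G_{2,x})\tr(\Frob_q,\G_{2,x}^\vee)$. Using $\tr(\Frob_q, \G_i^\vee) = \overline{\tr(\Frob_q,\G_i)}/q^{w_i+1}$ on $U$, this becomes $q^{-(w_1+w_2+2)}\sum_{x\in U(\mathbb F_q)} |\tr(\Frob_q,\G_{1,x})|^2 |\tr(\Frob_q,\G_{2,x})|^2$. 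Then I would extend the sum from $U(\mathbb F_q)$ to $\mathbb A^d(\mathbb F_q)$ at the cost of $O(q^{d-1})$ extra terms, each bounded by $O(q^{w_1+1})\cdot O(q^{w_2+1})$ since both $\G_i$ are mixed of weight $\le w_i+1$ everywhere; the total error $q^{-(w_1+w_2+2)} O(q^{d-1+w_1+w_2+2})$ is $o(q^d)$ and drops out. Finally, by Lemma \ref{shconstruction} and the bijection of Lemma \ref{universal} between $\mathbb A^d(\mathbb F_q)$ and characters $\Lambda: W_{d,q}\to\Ql^\times$, the resulting expression is exactly $q^d$ times the quantity appearing in Lemma \ref{mmtwo}, whose limit is $1$ (the hypothesis $d\ge 2$ being exactly what Lemma \ref{mmtwo} requires). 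Hence the dimension is $1$.

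The main subtlety — and the reason the hypothesis that neither $\F_1$ nor $\F_2$ is geometrically isomorphic to any $\L_\Lambda$ is imposed here — is that without it the identity $\sum_x \tr(\Frob_q,\F_i,x)\Lambda(1-tx) = -\tr(\Frob_q, H^1_c(\mathbb A^1_{\Fq},\F_i\otimes\L_\Lambda))$ fails for one value of $\Lambda$, and one would need the extra bookkeeping already carried out in Lemma \ref{mmtwo}; invoking Lemma \ref{mmtwo} directly sidesteps this since it handles both cases. I would also need to confirm the non-geometric-isomorphism hypothesis $\F_1 \not\cong \F_2$ is in force (it is, by the running assumption before Lemma \ref{mmtwo}), which is what guarantees the relevant cross term $q^{-(w_1+w_2)/2-1}\sum_x \tr(\Frob_q,\F_1,x)\overline{\tr(\Frob_q,\F_2,x)}$ vanishes in the limit and makes the final answer $1$ rather than $2$. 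The only genuinely delicate point in writing this up cleanly is the justification that $H^{2d}_c$ of the relevant sheaf on the (smooth affine) $U$ really does compute the $G_{geom}$-coinvariants and is pure of weight $2d$; this is standard (it is the top cohomology of a geometrically connected smooth affine variety of dimension $d$ with coefficients in a pure weight-$0$ lisse sheaf), and it is exactly the step used in the proof of Proposition \ref{moments}, so I would phrase it identically.
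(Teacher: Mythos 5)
Your proof follows the paper's argument step for step: identify the invariants with $H^{2d}_c(U_{\overline{\mathbb F}_q},\G_1\otimes\G_1^\vee\otimes\G_2\otimes\G_2^\vee)$, use purity of weight $2d$ to read off the dimension from $\limsup q^{-d}|\tr\Frob_q|$, unwind via the Lefschetz formula, pass from $U$ to $\mathbb A^d$ with a negligible error, and invoke Lemma \ref{mmtwo}. This is correct and matches the paper's proof.
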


\begin{proof} This invariant subspace is
\[H^{2d}_c \left(U_{\overline{\mathbb F}_q}, \G_1 \otimes \G_1^\vee \otimes \G_2 \otimes \G_2^\vee\right)\]
and is pure of weight $2d$, because $\mathcal G_i \otimes \mathcal G_i^{\vee}$ is pure of weight $0$. So the dimension of this invariant subspace is equal to 

\[ \lim \sup_ {q \to \infty} \frac{1}{q^{d}} \left| \tr\left(\Frob_q, H^{2d}_c\left(U_{\overline{\mathbb F}_q}, \G_1 \otimes \G_1^\vee \otimes \G_2 \otimes \G_2^\vee \right)\right)\right| \]

\[= \lim \sup_ {q \to \infty} \frac{1}{q^{d}} \left|\sum_i (-1)^i\tr\left(\Frob_q, H^{i}_c\left(U_{\overline{\mathbb F}_q}, \G_1 \otimes \G_1^\vee \otimes \G_2 \otimes \G_2^\vee \right)\right)\right| \]

\[= \lim \sup_ {q \to \infty} \frac{1}{q^{d}} \left|\sum_{x \in  U(\mathbb F_q)} \tr\left(\Frob_q, \G_1 \otimes \G_1^\vee \otimes \G_2 \otimes \G_2^\vee \right)\right| \]

\[= \lim \sup_ {q \to \infty} \frac{1}{q^{d}} \left|\sum_{x \in U(\mathbb F_q)} \tr(\Frob_q, \G_1)\tr(\Frob_q, \G_1^\vee)\tr(\Frob_q, \G_2)\tr(\Frob_q, \G_2^\vee) \right|\]

(Because $\mathcal G_i$ is pure of weight $w_i+1$ on $U$ (Lemma \ref{shweight}),  $\tr(\Frob_q, \mathcal G_i^\vee)= \overline{\tr(\Frob_q, \mathcal G_i)}/q^{w_i+1}$)

\[= \lim \sup_ {q \to \infty} \frac{1}{q^{d+w_1+w_2+2}} \left|\sum_{x \in U(\mathbb F_q)} \left|\tr(\Frob_q,  \G_1 )\right|^{2}  \left|\tr(\Frob_q,  \G_2 )\right|^{2} \right| \]

(If we extend the sum from $U$ to $\mathbb A^d$, by Lang-Weil or even simpler estimates we add $O(q^{d-1})$ terms. Because $\G_i$ is mixed of weight $w_i+1$ (Lemma \ref{shweight}), each of those terms has size at most $O( q^{w_1+w_2+2})$, so the total contribution $\frac{1}{q^{d+w_1+w_2+2}}  O( q^{ d-1+ w_1+w_2+2})$ is $o(1)$ and does not affect the limit.)

\[= \lim \sup_ {q \to \infty} \frac{1}{q^{d+w_1+w_2+2} }\left|\sum_{x \in \mathbb A^d(\mathbb F_q)} \left|\tr(\Frob_q,  \mathcal G_1 )\right|^2  \left|\tr(\Frob_q,  \mathcal G_2 )\right|^2   \right| \]

\[=\lim_{q \to \infty} \frac{ 1}{ q^d}  \sum_{ \Lambda: W_{d,q}\to \Ql^\times} \frac{ \left| \tr (\Frob_q, H^1_c( \mathbb A^1_{\Fq}, \F_1 \otimes \L_\Lambda)) \right|^{2} \left| \tr (\Frob_q, H^1_c( \mathbb A^1_{\Fq}, \F_2 \otimes \L_\Lambda)) \right|^{2}}{q^{w_1+w_2+2}} =  1\] (Lemma \ref{mmtwo})\end{proof}

Let $\F_1,\dots, \F_r$ be middle extension sheaves on $\mathbb A^1_{\mathbb F_q}$, pure of weights $w_1, \dots, w_r$ respectively. Form the corresponding sheaves $\G_1,\dots, \G_r$, as in Lemma \ref{shweight}. Let $U$ be an open subset on which all sheaves are lisse. Let $N_1,\dots,N_r$ be the ranks of $\G_1,\dots,\G_r$, and let $G_{geom}$ be the geometric monodromy group of $\bigoplus_{i=1}^r \G_i$.

\begin{cor} Assume that $d \geq 4$. Then the composition \[ G_{geom} \to \prod_{i=1}^r GL_{N_i} \to \prod_{i=1}^r PGL_{N_i} \] is a surjection.

\end{cor}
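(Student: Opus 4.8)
The plan is to bootstrap from the single-sheaf statement of Corollary \ref{monodromy} and the two-sheaf moment identity of Lemma \ref{indmoments}, packaging them with Goursat's lemma. Write $G = G_{geom}$, let $r_k : G \to GL_{N_k}$ denote the $k$-th projection (so $r_k(G)$ is the geometric monodromy group of $\G_k$ alone, acting on $V_k$), and let $\bar r_k : G \to PGL_{N_k}$ be $r_k$ followed by $GL_{N_k} \to PGL_{N_k}$. Since $d \geq 4$, Corollary \ref{monodromy} gives $r_k(G) \supseteq SL_{N_k}$, so each $\bar r_k$ is surjective, and Lemma \ref{rank formula} gives $N_k \geq d-1 \geq 3$. (If some $\F_k$ is geometrically isomorphic to an $\L_\Lambda$, one first applies the substitution $\Lambda \mapsto \Lambda\Lambda_0^{-1}$ on the parameter space $\mathbb A^d$, exactly as in the proof of Corollary \ref{monodromy}; this replaces $\G_k$ by an isomorphic sheaf and lets us invoke \cite{wvqkr} for that factor. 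We suppress this bookkeeping and argue in the generic case.)

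Each $PGL_{N_k}$ is a connected simple algebraic group, hence has no proper nontrivial closed normal subgroup. By an elementary induction on $r$ (a Goursat--Kolchin--Ribet type argument), surjectivity of $G \to \prod_{k=1}^r PGL_{N_k}$ follows once we show that for every pair $i \neq j$ the image $H := (\bar r_i, \bar r_j)(G)$ equals $PGL_{N_i} \times PGL_{N_j}$. Fix $i \neq j$. Since $H$ is a closed subgroup surjecting onto both factors, Goursat's lemma identifies it with the graph of an isomorphism between quotients $PGL_{N_i}/K_i \xrightarrow{\sim} PGL_{N_j}/K_j$ by closed normal subgroups; by simplicity either $H = PGL_{N_i} \times PGL_{N_j}$, or $H$ is the graph of an isomorphism of algebraic groups $\phi : PGL_{N_i} \xrightarrow{\sim} PGL_{N_j}$. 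Suppose the latter. Then $N_i = N_j =: N$ and $\bar r_j = \phi \circ \bar r_i$.

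Since $N \geq 3$, every automorphism of the algebraic group $PGL_N$ is inner or the composite of an inner automorphism with the inverse-transpose automorphism, so $\bar r_j$ is, up to an inner automorphism of $PGL_N$, either equal to $\bar r_i$ or to its dual. Lifting the corresponding relation to $GL_N$ produces a character $\chi : G \to \mathbb G_m$ (measuring the failure of the projective relation to be an honest relation) with $V_j \cong V_i \otimes \chi$ in the first case and $V_j \cong V_i^\vee \otimes \chi$ in the second. In either case the twist cancels in $V_j \otimes V_j^\vee$, so
\[ V_i \otimes V_i^\vee \otimes V_j \otimes V_j^\vee \;\cong\; \operatorname{End}(V_i) \otimes \operatorname{End}(V_i) \]
as $G$-representations, where $G$ acts through $\bar r_i$ (the center acts trivially on $\operatorname{End}(V_i)$), hence through its full image $PGL_N$. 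Decomposing $\operatorname{End}(V_i) = \mathbf 1 \oplus \mathrm{ad}$ with $\mathrm{ad}$ the irreducible, self-dual adjoint representation of $PGL_N$,
\[ \dim \left( \operatorname{End}(V_i)^{\otimes 2} \right)^{PGL_N} = \dim \operatorname{Hom}(\mathbf 1, \mathbf 1) + 2 \dim \operatorname{Hom}(\mathbf 1, \mathrm{ad}) + \dim \operatorname{Hom}(\mathrm{ad}^\vee, \mathrm{ad}) = 1 + 2\cdot 0 + 1 = 2, \]
so the dimension of the $G$-invariants of $V_i \otimes V_i^\vee \otimes V_j \otimes V_j^\vee$ would be $2$. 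But Lemma \ref{indmoments} (applicable as $d \geq 4 \geq 2$, $\F_i$ and $\F_j$ are geometrically non-isomorphic, and, in the generic case, neither is an $\L_\Lambda$) asserts this dimension is $1$ --- a contradiction. Hence $H = PGL_{N_i} \times PGL_{N_j}$ for all $i \neq j$, and the corollary follows.

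The heart of the argument, and the only place the independence input is used, is the exclusion of the graph case: once two of the projective monodromy representations are forced to agree up to an inner automorphism and a character twist, the invariants of $V_i \otimes V_i^\vee \otimes V_j \otimes V_j^\vee$ jump from the ``independent'' value $1$ to the value $2$ contributed by the extra copy of the adjoint representation, and Lemma \ref{indmoments} rules this out. The remaining ingredients --- the reduction to pairs via Goursat's lemma and simplicity of $PGL_N$, the classification of automorphisms of $PGL_N$ for $N \geq 3$, and the reduction of any geometrically rank-one factor $\F_k = \L_\Lambda$ to Katz's results --- are routine, though the last must be carried out carefully as in Corollary \ref{monodromy}.
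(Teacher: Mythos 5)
Your proof is correct and follows essentially the same route as the paper: Corollary \ref{monodromy} gives surjectivity onto each $PGL_{N_i}$, a Goursat--Kolchin--Ribet reduction (the paper cites \cite[Proposition 1.8.2]{esde}) reduces to pairs, and Lemma \ref{indmoments} furnishes the contradiction by bounding the invariants of $V_i \otimes V_i^\vee \otimes V_j \otimes V_j^\vee$. The only detour is in showing a graph subgroup would force those invariants to have dimension at least $2$: you classify automorphisms of $PGL_N$ for $N\geq 3$ and lift to a character twist $V_j \cong V_i^{(\vee)} \otimes \chi$, whereas the paper sidesteps that classification entirely by observing that any isomorphism $\sigma\colon PGL_{N_i}\to PGL_{N_j}$ commuting with the projections already intertwines the two adjoint representations, making them isomorphic (and, being self-dual, mutually dual) as $G_{geom}$-representations and hence contributing an extra invariant in $\mathrm{ad}_i\otimes \mathrm{ad}_j$ directly at the projective level.
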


\begin{proof}Corollary \ref{monodromy} implies that, for all $i$, the image of $G_{geom}$ inside $PGL_{N_i}$ contains the image of $SL_{N_i}$ inside $PGL_{N_i}$, which is all of $PGL_{N_i}$. Hence the projection map from the image of $G_{geom}$ inside $PGL_{N_i}$ to each factor is a surjection. We apply Goursat-Kolchin-Ribet \cite[Proposition 1.8.2]{esde} to the image of $G_{geom}$ inside $\prod_{i=1}^m PGL_{N_i} $. Because each $PGL_{N_i} $ is a simple group, it follows that, to show that the image is the whole product, it suffices to show that there is no $i,j$ with $i\neq j$ and isomorphism $\sigma: PGL_{N_i} \to PGL_{N_j}$ such that, for $g \in G_{geom}$, $\sigma(\pi_i(g))=\pi_j(g)$.

Were this true, the adjoint representation of $PGL_{N_i}$ and the adjoint representation of $PGL_{N_j}$ would be isomorphic as representations of $G_{geom}$. Because those representations are also self-dual, they would be dual to each other as representations of $G_{geom}$. Hence there would be a $G_{geom}$-invariant vector in the tensor product of the adjoint representation of $PGL_{N_i}$ with the adjoint representation of $PGL_{N_j}$. Since $V_i \otimes V_i^\vee$ is the trivial representation plus the adjoint representation of $PGL_{N_i}$, it follows that the invariant subspace of $V_i \otimes V_i^\vee\otimes V_j \otimes V_j^\vee$ would be at least two-dimensional. But this is not the case by Lemma \ref{indmoments}. \end{proof}

\begin{cor} Assume that $d \geq 4$. Then $G_{geom}$ contains $\prod_{i=1}^r SL_{N_i}$. \end{cor}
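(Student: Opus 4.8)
The plan is to bootstrap the linear statement from the preceding corollary, which asserts that the projection $G_{geom} \to \prod_{i=1}^r PGL_{N_i}$ is surjective. The mechanism is a commutator argument: each $SL_{N_i}$ is perfect, so it coincides with its own derived subgroup, and commutators are insensitive to multiplication by central (scalar) elements, which is exactly the ambiguity distinguishing $\prod_{i=1}^r SL_{N_i}$ from its image in $\prod_{i=1}^r PGL_{N_i}$. In particular, the statements already in hand — the surjectivity onto $\prod PGL_{N_i}$ and Corollary \ref{monodromy} — are more than enough input.

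Concretely, I would first record that by Lemma \ref{rank formula} one has $N_i \geq m_i(d-1) \geq d-1 \geq 3$ for every $i$, so each $SL_{N_i}$ is a perfect group; hence $S := \prod_{i=1}^r SL_{N_i}$ is perfect, and it suffices to show that $[g,h] \in G_{geom}$ for all $g,h \in S$. Given such $g,h$, let $\bar g,\bar h$ be their images in $\prod_{i=1}^r PGL_{N_i}$. Using the surjectivity from the preceding corollary, choose $\tilde g,\tilde h \in G_{geom}$ mapping to $\bar g,\bar h$. Then $\tilde g g^{-1}$ and $\tilde h h^{-1}$ lie in the kernel of $\prod_{i=1}^r GL_{N_i}\to \prod_{i=1}^r PGL_{N_i}$, i.e.\ they are tuples of scalar matrices and hence central in $\prod_{i=1}^r GL_{N_i}$; writing $\tilde g = zg$ and $\tilde h = z'h$ with $z,z'$ central and cancelling the central factors gives $[\tilde g,\tilde h] = [g,h]$. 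Since $\tilde g,\tilde h \in G_{geom}$, we obtain $[g,h]\in G_{geom}$, and as $S$ is perfect (its commutators generate it, and they are Zariski-dense in the algebraic group $S$) this yields $\prod_{i=1}^r SL_{N_i} = S \subseteq G_{geom}$.

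There is essentially no obstacle remaining at this stage: the real substance has already been carried out, in the preceding corollary (which rests on Lemma \ref{indmoments} and Goursat--Kolchin--Ribet) and in Corollary \ref{monodromy}. The only points to keep an eye on are that every $N_i$ is at least $2$, so that the relevant special linear groups are genuinely perfect — guaranteed by Lemma \ref{rank formula} together with $d\geq 4$ — and the elementary fact that the scalar subgroup of $\prod_{i=1}^r GL_{N_i}$ is central, which is precisely what makes the identity $[\tilde g,\tilde h]=[g,h]$ work.
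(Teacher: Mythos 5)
Your proof is correct. It is close in spirit to the paper's argument — both rest entirely on the surjectivity of $G_{geom} \to \prod_{i=1}^r PGL_{N_i}$ from the preceding corollary and exploit a commutator trick — but the mechanism is slightly different. The paper considers the derived subgroup $[G_{geom},G_{geom}]$: since $\prod_{i=1}^r PGL_{N_i}$ is perfect, this derived subgroup still surjects onto $\prod PGL_{N_i}$; since it consists of commutators it lies inside $\prod SL_{N_i}$; and since the isogeny $\prod SL_{N_i} \to \prod PGL_{N_i}$ has finite kernel and $\prod SL_{N_i}$ is connected, any closed subgroup of $\prod SL_{N_i}$ surjecting onto $\prod PGL_{N_i}$ is everything. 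You instead fix a commutator in $\prod SL_{N_i}$, lift the two entries to $G_{geom}$ modulo the center, and use that commutators are unchanged by central factors, then invoke perfection of $\prod SL_{N_i}$ rather than of $\prod PGL_{N_i}$. Your version is marginally more elementary, since it avoids any appeal to dimension or connectedness of algebraic groups and runs as pure group theory (given only that $G_{geom}$ is closed under products). One small point: you do not actually need $N_i \geq 3$ for perfection — $SL_n$ is perfect over an algebraically closed field already for $n\geq 2$ — though the stronger bound from Lemma \ref{rank formula} does no harm.
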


\begin{proof} Every algebraic group which projects surjectively to $\prod_{i=1}^r PGL_{N_i} $ must contain $\prod_{i=1}^r SL_{N_i}$. (Because $(f_1(x),\dots,f_r(x))$ and $(f_1'(x),\dots,f_r'(x))$  is perfect, the commutator subgroup of $G_{geom}$ also projects surjectively onto $\prod_{i=1}^r PGL_{N_i} $,  the commutator subgroup is a closed subset of $\prod_{i=1}^r SL_{N_i}$, and the projection $\prod_{i=1}^r SL_{N_i} \to \prod_{i=1}^r PGL_{N_i} $, so any closed subset with dense image is all of $\prod_{i=1}^r SL_{N_i}$).\end{proof}

\section{Proof of the main theorem in the case of middle extension sheaves}

\begin{proof}[Proof of Theorem \ref{main-geometric}]  Let $\G_1,\dots,\G_r$ be the sheaves constructed in Section \ref{sheaf-construction}. Let $N_i$ be the rank of $\G_i$ on the open subset where it is lisse. Let $U$ be the largest open subset where all are lisse. Define $S_{\mathbb F_q}$ as the set of $\Lambda$ such that $\Lambda_y$ lies in $ U(\mathbb F_q)$. By Lemma \ref{swan-lisse}, $U(\mathbb F_q) \geq \left( 1- \frac{ \sum_{i=1}^r m_i}{q} \right) q^d$.

Then the statement about the rank follows from Lemma \ref{shconstruction} and the statement about the purity follows from Lemma \ref{shweight}. The fact that the semisimplified conjugacy classes are conjugate to elements of the unitary group follows from the fact that they are semisimple by construction and their eignvalues all have norm one by purity. So it remains to prove 

\[ \lim_{q \to \infty}\left(  \frac{ \sum_{\Lambda \in S_q } f ( \varphi_{\Lambda,1},\dots, \varphi_{\Lambda,r}) } { \left| S_q \right|} - \frac{ \sum_{\Lambda \in S_q } \langle f \rangle ( \det \varphi_{\Lambda,1},\dots, \det \varphi_{\Lambda,r}) } { \left| S_q \right|}  \right) = 0  . \]

Because every term depends linearly on $f$, it suffices to prove this for a basis of the space of continuous conjugacy-invariant functions on $f$. In particular, we may assume that $f$ is the character of an irreducible representation of $\prod_{i=1}^{r} U (N_i)$, or, equivalently, the restriction to  $\prod_{i=1}^{r} U(N_i)$ of the character of an irreducible representation $V$ of  $\prod_{i=1}^{r} GL_{N_i}$. We divide into two cases depending on whether or not $V$ is one-dimensional.

If $V$ is one-dimensional, then it factors through the abelianization of $\prod_{i=1}^{r} GL_{N_i}$, so it depends only on the $r$ determinants. Hence $f= \langle f \rangle$ and the statement is trivial. 

If $V$ is not one-dimensional, then it remains nontrivial and irreducible as a representation of $\prod_{i=1}^r SU(N_i)$, and hence the average of the trace over any coset of  $\prod_{i=1}^r SU( N_i) $ vanishes, so $\langle f \rangle$ is zero. So it is sufficient to prove in this case that \[ \frac{ \sum_{\Lambda \in S_q }\tr ( \varphi_{\Lambda,1},\dots, \varphi_{\Lambda,r},V) } { \left| \left\{\Lambda \mid \Lambda  \in S_q \right\} \right|} =o(1) . \] In fact, we will show it is  $O \left( \frac{1}{\sqrt{q}} \right).$

Here we proceed as in the proof of Deligne's equidistribution theorem. We extend $V$ to an algebraic representation of $\prod_{i=1}^r GL_{N_i}$ (that again does not factor through the determinant map). We may compose $V$ with the monodromy representations of $\G_i$, $\pi_1(U) \to GL_{N_i}(\Ql)$ to obtain a representation of $\pi_1(U)$, and hence a lisse sheaf $\mathcal V$ on $U$. Let $e_i$ be such that the scalars in $GL_{N_i}$ act on $V$ by the character $\lambda \mapsto \lambda^{e_i}$. Then we have
\[= \tr( ( \varphi_{\Lambda,1},\dots, \varphi_{\Lambda,r}),V)=\tr \left( \left( \frac{\Frob_q}{q^{\frac{w_1+1}{2}}},\dots,  \frac{\Frob_q}{q^{\frac{w_r+1}{2}}}\right), V \right) \]
(Here the $i$th $\Frob_q$ is as an element of $GL_{N_i}$ by its action on $H^1_c(\mathbb A^1_{\Fq}, \F_i \otimes \L_{\Lambda})$   )
\[  =\frac{ \tr \left(\left( \Frob_q,\dots,\Frob_q\right) ,V\right)} {q^{ e_1\frac{w_1+1}{2} + \dots + e_r \frac{w_r+1}{2}}} = \frac{ \tr(\Frob_q, \mathcal V, \Lambda)} { q^{ e_1\frac{ w_1+1}{2} + \dots + e_r \frac{w_r+1}{2}}}.\]

Hence it suffices to show that  \[\sum_{x \in U(\mathbb F_q)} \tr(\Frob_q, \mathcal V_x) = O \left( q^{d - \frac{1}{2} + \sum_{i=1}^r e_i \frac{w_i+1}{2} }\right)\]

By the Lefschetz fixed point formula, this is $\sum_{i=0}^{2d} (-1)^i \tr(\Frob_q, H^i_c(U_{\Fq},\mathcal V))$. Because $\Frob_q$ acts on $V$ at a typical point of $U$ with eigenvalues of size $q^{ \sum_{i=1}^r e_i (w_i+1)/2}$, $\mathcal V$ is pure of weight $ \sum_{i=1}^r e_i (w_i+1)$, and so $H^i_c(U_{\Fq},\mathcal V))$ is mixed of weight $\leq i + \sum_{i=1}^r e_i (w_i+1)$ and hence \[\tr(\Frob_q, H^i_c(U_{\Fq},\mathcal V)) =O \left(q^{ i + \sum_{i=1}^r e_i \frac{ w_i+1}{2}}\right)= O \left( q^{d -\frac{1}{2} + \sum_{i=1}^r e_i \frac{w_i+1}{2})}\right)\] for $i\leq 2d-1$. Furthermore $H^{2d}_c(U_{\Fq},\mathcal V)$ vanishes because, as $V$ is irreducible and does not factor through the determinant, no subrepresentation or quotient representation of $V$ factors through the determinant, hence no quotient is invariant under the geometric monodromy group, i.e. $\mathcal V$ has no geometric monodromy invariants, and its top cohomology vanishes. 

Hence every term is $O\left( q^{d - \frac{1}{2} + \sum_{i=1}^r e_i \frac{w_i+1}{2}}\right)$, as desired.
\end{proof}
\bibliographystyle{alpha}

\bibliography{references}

\end{document}